\newtheorem{thm}{Theorem}[section]
\newtheorem{lemma}[thm]{Lemma}
\newtheorem{cor}[thm]{Corollary}
\newtheorem{property}[thm]{Property}
\theoremstyle{definition}
\theoremstyle{remark}
\newtheorem{remark}{Remark}
\newcommand{\comment}[1]{}
\def\ignore#1{{ }}
\newcommand{\N}{\mathbb N}
\newcommand{\Z}{\mathbb Z}
\newcommand{\T}{\mathcal{T}}
\begin{document}


\author{H.A. Kierstead}
\address{Department of Mathematics and Statistics, Arizona State University,
Tempe, AZ 85287, USA. }
\thanks{Research
of this author is supported in part by NSA grant H98230-12-1-0212.}
\email{kierstead@asu.edu.}

\author{A.V. Kostochka}
\address{Department of Mathematics \\ University of Illinois \\ Urbana, IL 61801, USA\\ and Sobolev Institute of Mathematics\\ Novosibirsk, Russia}
\thanks{Research of this author is supported in part by NSF grant  DMS-1266016 and  by grants 15-01-05867  and 16-01-00499
 of the Russian Foundation for Basic Research. }
\email{kostochk@math.uiuc.edu}

\author{A. M\lowercase{c}Convey}
\address{Department of Mathematics \\ University of Illinois \\ Urbana, IL 61801\\ USA}
\thanks{This author gratefully acknowledges support from the Campus Research Board, University of Illinois.}
\email[Corresponding author]{mcconve2@illinois.edu}

\title{Strengthening theorems of Dirac and Erd\H{o}s on disjoint cycles}

\begin{abstract} 
Let $k \geq 3$ be an integer, $H_{k}(G)$ be the set of vertices of degree at least $2k$
in a graph $G$, and $L_{k}(G)$ be the set of vertices of degree at most $2k-2$ in $G$.
  In 1963, Dirac and Erd\H{o}s proved that $G$ contains $k$ (vertex-)disjoint cycles 
  whenever $|H_{k}(G)| - |L_{k}(G)| \geq k^{2} + 2k - 4$.  
 The main result of this paper is that for $k \geq 2$, every
  graph $G$ with $|V(G)| \geq 3k$ containing at most  $t$ disjoint triangles and with $|H_{k}(G)| - |L_{k}(G)| \geq 2k + t$
   contains $k$ disjoint cycles.  This yields that if $k \geq 2$ and $|H_{k}(G)| - |L_{k}(G)| \geq 3k$, then $G$ contains $k$ disjoint cycles.  
   This generalizes the Corr\'{a}di-Hajnal Theorem, which states that every graph $G$  with $H_{k}(G) = V(G)$ and $|H_{k}(G)| \geq 3k$
   contains $k$ disjoint cycles.
\end{abstract}

\maketitle

{\small{Mathematics Subject Classification: 05C35, 05C70, 05C10.}}{\small \par}

{\small{Keywords: Disjoint Cycles, Disjoint Triangles, Minimum Degree, Planar Graphs.\\}}{\small \par}
 
 \section{Introduction}
For a graph $G$, let $|G| = |V(G)|$, $\| G \| = |E(G)|$, and $\delta(G)$ be the minimum degree of a vertex in $G$. 
The complement of $G$ is denoted by $\overline G$.  The join $G\vee G'$ of two graphs is $G \cup G'\cup \{xx':x\in V(G)~\mbox{and}~x'\in V(G')\}$. 
 Let $SK_{m}$ denote the graph obtained by subdividing one edge of the complete $m$-vertex graph $K_{m}$. 
 For a positive integer $k$, define $H_{k}(G)$ to be the subset of vertices with degree at least $2k$ and $L_{k}(G)$ to be the subset of vertices of degree at most $2k - 2$.

Resolving a conjecture of Erd\H{o}s,  Corr\'{a}di and Hajnal~\cite{C-H} proved the following theorem.

\begin{thm}\cite{C-H}\label{thm:C-H}
Let $G$ be a graph and $k$ a positive integer.  If $|G| \geq 3k$ and $\delta(G) \geq 2k$, then $G$ contains $k$ disjoint cycles.
\end{thm}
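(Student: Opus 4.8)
The plan is to argue by induction on $k$. When $k=1$ the statement is immediate: a graph with $\delta(G)\ge 2$ contains a cycle, since a longest path $P$ has an end-vertex all of whose neighbours lie on $P$, and a neighbour of that end-vertex farthest along $P$ closes up a cycle. So suppose $k\ge 2$ and the theorem holds for $k-1$. As $\delta(G)\ge 2k\ge 2(k-1)$ and $|G|\ge 3k\ge 3(k-1)$, the inductive hypothesis yields a family of $k-1$ pairwise disjoint cycles in $G$; among all such families I would choose $\mathcal F=\{C_1,\dots,C_{k-1}\}$ with $|V(\mathcal F)|$ as small as possible, and set $R=V(G)\setminus V(\mathcal F)$. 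If $G[R]$ contains a cycle, adding it to $\mathcal F$ gives the desired $k$ disjoint cycles, so assume $G[R]$ is a forest. The degenerate case $R=\emptyset$ is handled by a separate, shorter argument: then $\sum_i|C_i|=|G|\ge 3k>3(k-1)$, so some $C_i$ has length at least $4$, and using $\delta(G)\ge 2k$ one finds on that cycle either a chord or a vertex with three neighbours on another $C_j$, which can be exploited to break off a $k$-th cycle.

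So assume $R\neq\emptyset$. Since $G[R]$ is a forest, it has a vertex $v$ with at most one neighbour in $R$, hence with at least $2k-1$ neighbours in $V(\mathcal F)$. Because $|\mathcal F|=k-1$, the pigeonhole principle produces a cycle $C\in\mathcal F$ on which $v$ has at least three neighbours $u_1,u_2,u_3$, appearing in this cyclic order and splitting $C$ into three arcs. Taking $P$ to be a shortest of those arcs, say between two consecutive of the $u_i$, the cycle formed by $v$, the arc $P$, and the two edges joining $v$ to the endpoints of $P$ has at most $\lfloor |C|/3\rfloor+2$ vertices, which is strictly less than $|C|$ whenever $|C|\ge 4$. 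Replacing $C$ by this shorter cycle would contradict the minimality of $|V(\mathcal F)|$, so $C$ must be a triangle, and then $v$ is adjacent to all three of its vertices.

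What remains — and this I expect to be the main obstacle — is the endgame. At this point there is a triangle $C=xyz$ in $\mathcal F$ and a vertex $v\in R$ adjacent to each of $x,y,z$ and to at most one vertex of $R$, and one must either assemble a $k$-th disjoint cycle or force $|G|\le 3k-1$. The useful flexibility is that $v$ may be exchanged for any one of $x,y,z$ (say, replacing $C$ by $vxy$ and moving $z$ into $R$) while keeping a minimum-size family of $k-1$ disjoint cycles, so the four vertices $x,y,z,v$ are essentially interchangeable; iterating such exchanges and attaching further vertices of the forest $R$, the goal is to show that $R$ together with a single triangle of $\mathcal F$ must contain a cycle avoiding the other $C_j$, unless the vertices of $R$ and their attachments to $V(\mathcal F)$ are so restricted that the total vertex count falls below $3k$. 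Making this precise will require sharpening the choice of $\mathcal F$ with one or two secondary optimality conditions — for instance, among minimum-size families also minimising the number of edges inside $R$, or maximising the number of the $C_i$ that are triangles — and then organising a case analysis according to how the forest $R$ hangs off the triangles. That last step carries essentially all the difficulty; everything before it is bookkeeping.
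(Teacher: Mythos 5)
This statement is the Corr\'adi--Hajnal theorem, which the paper cites from \cite{C-H} and uses as a black box; there is no proof of it in the paper to compare against, so I can only assess your argument on its own terms. Your opening moves are the standard ones and are correct as far as they go: induction on $k$, a family $\mathcal F$ of $k-1$ disjoint cycles chosen to minimise $|V(\mathcal F)|$, the observation that $G[R]$ may be assumed to be a forest, the count showing a near-leaf $v$ of that forest sends at least $2k-1$ edges to $V(\mathcal F)$, the pigeonhole step giving three neighbours on a single cycle $C$, and the rerouting argument showing $|C|=3$ (your vertex count $\lfloor |C|/3\rfloor+2<|C|$ for $|C|\ge 4$ checks out).

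However, the proof is not complete, and the missing part is precisely the part that makes the theorem hard. You write that the endgame ``carries essentially all the difficulty'' and then describe only a strategy for it (secondary optimality conditions on $\mathcal F$, exchange moves between $v$ and the vertices of a triangle, a case analysis on how the forest attaches to the triangles) without executing any of it. That is a genuine gap, not bookkeeping: the whole content of Corr\'adi--Hajnal beyond the first page is exactly this analysis, typically run on the two endpoints of a longest path in $G[R]$ and the triangles of $\mathcal F$ they attach to, and it does not reduce to a one-line observation. A second, smaller gap is the case $R=\emptyset$: your claim that a vertex of a long cycle $C_i$ with three neighbours on another cycle $C_j$ ``can be exploited to break off a $k$-th cycle'' does not follow from minimality in the way the $v\in R$ argument does, because the rerouted copy of $C_j$ would pass through a vertex of $C_i$ and hence destroy $C_i$; this case needs its own honest argument. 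As it stands the proposal is a correct setup plus an accurate description of where the proof would have to go, but not a proof.
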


Since each cycle has at least $3$ vertices, the condition $|G| \geq 3k$ is necessary.  The condition $\delta(G) \geq 2k$ is also sharp, 
as witnessed by the graph $G_{n,k}=\overline{K}_{n - 2k +1} \vee K_{2k-1}$ for $n \geq 3k$.  Indeed, any cycle in $G_{n,k}$ 
must contain at least two vertices from the $K_{2k-1}$ 
 and therefore $G_{n,k}$ contains at most $k-1$ disjoint cycles.  
 
 Theorem~\ref{thm:C-H} prompted a series of refinements and extensions for both undirected graphs
 (see, e.g.,~\cite{D-E, Di, HSz, Enomoto, Wang, KK-Ore, CFKS,  KK-refCH, K-K-Y, KKMY}) and directed graphs (see, e.g.,~\cite{WangDir, CKM, CDKM, T}).
In particular,  Dirac and Erd\H{o}s \cite{D-E}  proved in 1963 the following theorem.

\begin{thm}\cite{D-E}\label{thm:D-E}
Let $k \geq 3$  be an integer and $G$ be a graph  with $|H_{k}(G) | - |L_{k}(G) | \geq k^{2} + 2k - 4$.  Then $G$ contains $k$ disjoint cycles.
\end{thm}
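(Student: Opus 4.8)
The plan is to prove the statement by induction on $k$, so I would assume it for $k-1$ and derive it for $k$; the base case is $k=3$, which has to be argued directly, since the surplus bound $f(k):=k^{2}+2k-4$ genuinely fails for $k=2$ (e.g.\ $K_{5}$ has surplus $5$ but no two disjoint cycles), so the recursion cannot be run below $k=3$. Thus assume $k\ge 4$. Inside the step I would run a secondary induction on $|G|$ to clear out low-degree vertices: a vertex of degree $0$ or $1$ may be deleted, and a vertex $v$ of degree $2$ with non-adjacent neighbours may be suppressed (delete $v$ and join its neighbours); each such move keeps $|H_{k}(G)|-|L_{k}(G)|$ from decreasing, strictly decreases $|G|$, and does not change the maximum number of disjoint cycles. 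If $v$ has degree $2$ with adjacent neighbours $u_{1},u_{2}$, then every neighbour of $v$ lies on the triangle $vu_{1}u_{2}$, so no vertex outside it has three neighbours on it and we may peel it via the Claim below. Hence I may assume $\delta(G)\ge 3$; and if in fact $\delta(G)\ge 2k$ then $H_{k}(G)=V(G)$, $L_{k}(G)=\emptyset$, so $|G|\ge f(k)\ge 3k$ and Theorem~\ref{thm:C-H} applies. So I may further assume $G$ has a vertex of degree at most $2k-1$.

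The engine of the induction is the following: \emph{if $G$ contains a cycle $C$ with $|C|\le 2k+1$ such that no vertex of $V(G)\setminus V(C)$ has three or more neighbours on $C$, then $G-V(C)$ satisfies the hypothesis with $k-1$ in place of $k$, and hence $G$ contains $k$ disjoint cycles.} To prove this I would compare the two indexings. On passing from $k$ to $k-1$, the vertices newly counted positively are exactly those of degree $2k-2$ or $2k-1$, and the vertices no longer counted negatively are exactly those of degree $2k-3$ or $2k-2$; so, writing $A,B,\Gamma$ for the numbers of vertices of $G$ of degree $2k-3,2k-2,2k-1$, one gets $|H_{k-1}(G)|-|L_{k-1}(G)|=|H_{k}(G)|-|L_{k}(G)|+A+2B+\Gamma$. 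Deleting $V(C)$ lowers each surviving degree by at most $2$ (by the hypothesis on $C$), so the only vertices demoted out of $H_{k-1}$ or into $L_{k-1}$ have degree $2k-3$, $2k-2$, or $2k-1$ and a neighbour on $C$, and a term-by-term count shows their total contribution to the drop in the surplus is at most $A+2B+\Gamma$ — which exactly cancels the reindexing gain. What remains of the drop is at most $|V(C)\cap H_{k-1}(G)|\le|C|\le 2k+1=f(k)-f(k-1)$, so $|H_{k-1}(G-V(C))|-|L_{k-1}(G-V(C))|\ge f(k-1)$ and the inductive hypothesis in $k$ finishes.

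It remains to produce such a cycle. If $G$ has girth between $4$ and $2k+1$, I would take $C$ to be a shortest cycle: it is chordless, and if some vertex $w\notin V(C)$ had three neighbours on $C$ the three arcs they cut off would each complete a cycle, forcing each arc to have length at least $|C|-2$ and hence $|C|\le 3$, a contradiction; so the Claim applies. If $G$ has girth $3$ and some triangle has no vertex adjacent to all three of its vertices, that triangle is a legitimate $C$. What is left over are two configurations: (i) $G$ has girth $3$ and \emph{every} triangle of $G$ is dominated; and (ii) $G$ has girth at least $2k+2$.

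The hard part will be exactly these two cases, together with the $k=3$ base case. In case (i) the domination condition should cascade — every triangle lies in a $K_{4}$, and iterating this forces a large clique; once $K_{3k}\subseteq G$ we are done, and otherwise the very-high-degree vertices are confined to a bounded clique-like core and $|H_{k}(G)|-|L_{k}(G)|$ cannot reach $f(k)$. In case (ii), peeling a shortest cycle $C$ again costs only $|C|$ in the surplus (each degree drops by at most $2$), which is just over the $2k+1$ budget; one must recover the small deficit — noting that $C$ cannot consist solely of vertices of degree $\ge 2k$ unless $\delta(G)\ge 2k$, already handled, so $C$ must contain some favourable low-degree vertices — and then iterate with care. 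These two cases are also exactly where the substantial gap opens between $f(k)=k^{2}+2k-4$ and the conjecturally correct bound of roughly $3k$.
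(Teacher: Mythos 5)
Your reduction engine is arithmetically sound: the identity $|H_{k-1}(G)|-|L_{k-1}(G)|=|H_{k}(G)|-|L_{k}(G)|+A+2B+\Gamma$ is correct, the demotion count after deleting a cycle $C$ on which no outside vertex has three neighbours is indeed at most $A+2B+\Gamma$, and $f(k)-f(k-1)=2k+1$, so the step from $k$ to $k-1$ works whenever such a $C$ of length at most $2k+1$ exists and $k-1\ge 3$. The low-degree clean-up and the treatment of chordless shortest cycles of length between $4$ and $2k+1$ are also fine. But what you have written is a reduction, not a proof: the three pieces you defer --- the base case $k=3$, the case in which every triangle is dominated, and the case of girth at least $2k+2$ --- are exactly where the content of the theorem lives, and none of them is established. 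Moreover, your sketch for case (i) rests on a false cascade: ``every triangle lies in a $K_4$'' does not iterate to force a larger clique, because the triangles inside a $K_4$ are already dominated by its fourth vertex (a disjoint union of copies of $K_4$ satisfies the domination condition with clique number $4$). The base case $k=3$ with surplus $11$ also cannot be dispatched by your engine, since the recursion would land on the false $k=2$ statement ($K_5$ has surplus $5\ge f(2)$ and no two disjoint cycles), so it needs an independent argument of difficulty comparable to the whole theorem.

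For comparison: the paper does not prove Theorem~\ref{thm:D-E} from scratch at all; it is quoted from Dirac and Erd\H{o}s, and within the paper it is subsumed by Corollary~\ref{cor:3k} (surplus $3k$ suffices for $k\ge 2$, and $k^2+2k-4\ge 3k$ for $k\ge 3$), which in turn follows from Theorem~\ref{thm:2k+t}. The paper's route is quite different from yours: rather than peeling one short cycle at a time against a quadratic budget, it takes a minimal counterexample, shows every low-degree vertex lies in a triangle, and deletes a carefully chosen family $\mathcal{R}^*$ of disjoint triangles reachable in an auxiliary digraph, with the budget $2k+t$ tied to the maximum number of disjoint triangles; the base case $k=2$ is handled by the separate Lemmas~\ref{lemma:trianglefree} and~\ref{lemma:2core} together with Theorem~\ref{thm:K-K-Y}. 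If you wish to complete your own line, expect essentially all of the remaining effort to go into your two residual structural cases and the $k=3$ base.
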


The bound $k^2+2k-4$ is not best possible.  Dirac and Erd\H{o}s provided the following examples 
of a graph $G$ without $k$ disjoint cycles such that $|H_{k}(G)| - |L_{k}(G)| = 2k - 1$.  Let $n\geq 3k$ be odd.
Let $V(G) = X \cup Y \cup Z$ with $|X| = 2k - 1$ and $|Y| = |Z| = \frac{n-2k+1}{2} $.  Let the set of edges
of $G$ consist of a perfect matching connecting $Y$ with $Z$, 
 all edges between $X$ and $Y$, and all  edges inside $X$.  
Then, $H_{k}(G) = X \cup Y$ and $L_{k}(G) = Z$, but every cycle must contain at least two vertices of $X$.

 Dirac and Erd\H os also proved that for a planar graph $G$ weaker restrictions on the difference $|H_{k}(G)| - |L_{k}(G)|$ 
 provide that $G$ contains $k$ cycles 

\begin{thm}\cite{D-E}\label{thm:D-Eplanar}
Let $k \geq 3$ be an integer and $G$ be a planar graph such that $|H_{k}(G)| - |L_{k}(G)| \geq 5k - 7$.  Then $G$ contains $k$ disjoint cycles.
\end{thm}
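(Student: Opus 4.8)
The plan is to prove the statement by induction on $k$, with base case $k = 3$; the inductive step uses Euler's formula to locate a short cycle that can be removed while keeping the quantity $|H_k(G)| - |L_k(G)|$ under control.

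For the step, suppose $k \ge 4$. I would first show that $G$ contains a cycle of length at most $5$. If $G$ had girth at least $6$, then $\|G\| \le \frac{3}{2}(|G| - 2)$, so $\sum_{v \in V(G)}(d(v) - 3) \le -6$; since every vertex of $H_k(G)$ contributes at least $2k - 3$ to the left-hand side, every other vertex of degree at least $3$ contributes a nonnegative amount, and every vertex of degree at most $2$ (all of which lie in $L_k(G)$, as $2 \le 2k - 2$) contributes at least $-3$, we get $(2k - 3)|H_k(G)| - 3|L_k(G)| \le -6$, hence $|H_k(G)| < |L_k(G)|$ because $2k - 3 \ge 3$ --- contradicting $|H_k(G)| - |L_k(G)| \ge 5k - 7 > 0$. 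So fix a shortest cycle $C$ of $G$, of length $g \in \{3, 4, 5\}$, and put $G' := G - V(C)$, which is again planar. I would then verify that $|H_{k-1}(G')| - |L_{k-1}(G')| \ge 5(k - 1) - 7$, i.e.\ that this quantity drops by at most $5$ on passing from $(G, k)$ to $(G', k - 1)$; the induction hypothesis then provides $k - 1$ disjoint cycles in $G'$, and adding $C$ gives $k$ disjoint cycles in $G$.

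Bounding that drop is the crux, and it is where planarity is used. Because $C$ is a shortest cycle, a vertex outside $C$ has at most $2$ neighbours on $C$ when $g = 4$ and at most $1$ when $g = 5$ (two neighbours at small distance along $C$ would create a shorter cycle). Hence when $g \in \{4, 5\}$ no vertex outside $C$ loses enough degree to fall out of $H_{k-1}(G')$ or into $L_{k-1}(G')$ --- the relevant thresholds $2k$ and $2k - 2$ both drop by $2$, which absorbs the loss --- so the only effect is deletion of the $g$ vertices of $C$, and the quantity drops by at most $|H_k(G) \cap V(C)| \le g \le 5$. When $g = 3$, the vertices outside the triangle $C$ that can be affected are precisely those adjacent to all three of its vertices; these, together with $V(C)$, would span a $K_{3,3}$, so there are at most two of them, and a short case check bounds the drop by $|V(C)| + 2 = 5$ here as well.

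The main obstacle is the base case $k = 3$: a planar graph $G$ with $|H_3(G)| - |L_3(G)| \ge 8$ must contain three disjoint cycles. This cannot be reduced further to the case $k = 2$, since the $k = 2$ analogue is false: $K_5$ minus an edge is planar, has no two disjoint cycles, and satisfies $|H_2| - |L_2| = 3$. I would treat $k = 3$ directly through the standard extremal configuration: take disjoint cycles $C_1, \dots, C_m$ with $m \le 2$ and $\sum_i |C_i|$ minimum, so that $R := G - \bigcup_i V(C_i)$ is a forest, each $C_i$ is induced, and no vertex of $R$ has two neighbours at distance at most $2$ on a common $C_i$; then bound $|H_3(G)|$ from above and $|L_3(G)|$ from below, using that $R$ is a forest (hence rich in low-degree vertices), that each vertex of $R$ has only boundedly many neighbours on each $C_i$, and that the bipartite subgraph between $R$ and $V(C_1) \cup V(C_2)$ is planar and thus sparse. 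Squeezing this bookkeeping below $8$ --- in particular when some $C_i$ is a triangle to which vertices of $R$ attach heavily --- is the delicate part, and is where Euler's formula gets used repeatedly.
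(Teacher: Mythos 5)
Your inductive step is essentially sound: the girth computation via Euler's formula, the observation that for a shortest cycle of length $4$ or $5$ no outside vertex loses enough degree to matter once the thresholds drop from $2k, 2k-2$ to $2k-2, 2k-4$, and the $K_{3,3}$ bound of at most two ``full'' neighbours of a triangle (each of which costs at most $1$, since a vertex losing three degrees cannot both fall out of $H_{k-1}(G')$ and land in $L_{k-1}(G')$) do give a drop of at most $5$ in $|H_k|-|L_k|$, which is exactly what the recurrence $5k-7\mapsto 5(k-1)-7$ tolerates. The genuine gap is the base case $k=3$. You correctly note that it cannot be pushed down to $k=2$ (because of $K_5-e$) and that it is the delicate part, but you then only outline a plan (an extremal pair of cycles, a forest remainder, repeated Euler bookkeeping) without carrying it out; the statement ``every planar $G$ with $|H_3(G)|-|L_3(G)|\ge 8$ has three disjoint cycles'' is therefore assumed, not proved. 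Your own step cannot supply it: even against the true $k=2$ statement with threshold $4$ (which does hold, via Lemma~\ref{lemma:2core}), a worst-case drop of $5$ from $8$ leaves only $3$, and $K_5-e$ shows $3$ is not enough at $k=2$. Closing the base case requires an idea not present in your step, such as choosing the deleted cycle through a vertex of $L_k(G)$.

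For comparison: the paper does not reprove Theorem~\ref{thm:D-Eplanar} directly but establishes the stronger Theorem~\ref{thm:planar} (threshold $2k$ instead of $5k-7$), which implies it for $k\ge 3$. That proof also deletes one short cycle per step, but chooses it far more carefully: Euler's formula forces some $x\in L_k(G)$ to exist, a contraction argument shows every edge at $x$ lies in a triangle, and taking an innermost such triangle $T'$ in a plane drawing leaves at most one outside vertex adjacent to all of $T'$ (your $K_{3,3}$ argument, sharpened by the choice of $T'$). Deleting $T'$ then costs at most $2$ in $|H|-|L|$, because $x$ itself contributes $+1$ on the $L$ side; this is what sustains the threshold $2k$, and the base case $k=2$ is absorbed by the $2$-core lemma, with planarity used only to exclude $SK_5$. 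Your coarser step is a legitimate alternative route to the weaker bound $5k-7$, but without a completed $k=3$ case the argument is not a proof.
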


The main result of this paper is the following theorem.

\begin{thm}\label{thm:2k+t}
Let $k \geq 2$ be an integer and $G$ be a graph such that $|G| \geq 3k$.  
Let $t$ be the maximum number of disjoint triangles contained in $G$.  If
\[ |H_{k}(G) | - |L_{k}(G) | \geq 2k + t,\]
then $G$ contains $k$ disjoint cycles.
\end{thm}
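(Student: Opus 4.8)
The plan is to proceed by induction on $k$. The base case $k=2$ should be settled directly: if $t\ge 2$ then $G$ already contains two disjoint triangles, while if $t\le 1$ then $|H_2(G)|-|L_2(G)|\ge 4$ together with $|G|\ge 6$ forces two disjoint cycles (here one can lean on the classical description of the graphs having no two vertex-disjoint cycles). For the inductive step with $k\ge 3$, I would assume the theorem for $k-1$, take $G$ satisfying the hypotheses, and suppose for contradiction that $G$ has no $k$ disjoint cycles; then automatically $t\le k-1$. If $\delta(G)\ge 2k$ we are done by Theorem~\ref{thm:C-H} (recall $|G|\ge 3k$), so we may assume $\delta(G)\le 2k-1$.

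The driving idea is a reduction that removes a single short cycle $C$ and applies the inductive hypothesis to $G-V(C)$ with parameter $k-1$. First suppose $C$ is a triangle. Adjoining $C$ to a largest triangle packing of $G-V(C)$ shows that the maximum number of disjoint triangles strictly decreases, so it is enough to check $|H_{k-1}(G-V(C))|-|L_{k-1}(G-V(C))|\ge 2(k-1)+(t-1)=2k+t-3$. Now compare $(G,k)$ with $(G-V(C),k-1)$: deleting the three vertices of $C$ lowers each surviving degree by at most $3$, whereas the thresholds defining $H$ and $L$ both fall by exactly $2$. A vertex-by-vertex count then yields
\[
|H_{k-1}(G-V(C))|-|L_{k-1}(G-V(C))|\ \ge\ |H_k(G)|-|L_k(G)|-\bigl|V(C)\cap H_k(G)\bigr|+\bigl|V(C)\cap L_k(G)\bigr|-\beta(C),
\]
where $\beta(C)$ is the number of vertices of $G-V(C)$ adjacent to all of $C$ whose degree in $G$ is $2k-1$ or $2k$ (a vertex of degree at least $2k+1$ adjacent to all of $C$ stays in $H_{k-1}$, and a vertex of degree at most $2k-2$ can only help, since both thresholds drop by $2$). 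Hence, if $G$ has a triangle $C$ with $|V(C)\cap H_k(G)|-|V(C)\cap L_k(G)|+\beta(C)\le 3$, we are done by induction. The same mechanism applies to a shortest cycle $C$ of length $4$ when $G$ is triangle-free: no vertex then has three neighbours on $C$, so degrees fall by at most $2$ while the thresholds fall by $2$; there is no $\beta$-term, and it suffices that $C$ carry at most two vertices of $H_k(G)$ after the low-degree vertices are credited. Thus the theorem reduces to showing that no graph survives in which every such short cycle is ``expensive''.

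Ruling out that residual case is where I expect the real work, and the main obstacle. The surviving graph would be near-extremal: many triangles spanning three vertices of $H_k(G)$, or some triangle carrying a large ``sunflower'' $\beta(C)$ of common neighbours of degree $2k-1$ or $2k$ (and, in the triangle-free subcase, every short cycle saturated by $H_k(G)$). To exploit this I would pass to a maximum family $\mathcal C$ of disjoint cycles --- of size $\ell\le k-1$ --- chosen with $\sum_{C\in\mathcal C}|V(C)|$ minimum, set $R:=G-\bigcup_{C\in\mathcal C}V(C)$, and use that $R$ is a forest and so contains few vertices of large degree; it follows that almost all of the at least $2k+t$ vertices counted by $|H_k(G)|-|L_k(G)|$ lie on the at most $k-1$ chordless cycles of $\mathcal C$. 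A pigeonhole-and-rerouting analysis of $\mathcal C$ together with $R$ --- into which the triangle count $t$ and the extremal shapes $G_{n,k}$ and $SK_m$ from the introduction naturally feed --- should then either produce a $k$-th disjoint cycle or pin $G$ down to (a blow-up of) one of those extremal configurations, for which $|H_k(G)|-|L_k(G)|<2k+t$ by a direct computation, contradicting the hypothesis. Lastly, the small cases in which $|G|$ is so close to $3k$ that the deletions above would leave fewer than $3(k-1)$ vertices must be checked by hand.
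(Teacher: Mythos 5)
Your reduction framework (delete a short cycle $C$, drop $k$ by one, and track the loss in $|H|-|L|$ via the vertices having three neighbours on $C$) is sound as far as it goes, and your accounting of the $\beta(C)$ term is correct: a vertex with $\|v,C\|=3$ and $d(v)\in\{2k-1,2k\}$ is exactly the kind that leaves $H$ or enters $L$ when the threshold drops by $2$. But the proof has a genuine gap precisely where you say ``this is where I expect the real work'': the residual case in which \emph{every} triangle is expensive is not an edge case to be cleaned up --- it is the entire content of the theorem, and your proposed treatment of it (pass to a minimum-length maximum cycle packing, observe the remainder is a forest, and hope a ``pigeonhole-and-rerouting analysis'' pins $G$ down to a known extremal configuration) is not an argument. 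There is no reason to expect such a classification to emerge; the paper never obtains one. What is actually needed is an iterative triangle-exchange mechanism: fix $x\in L$ (after first proving every vertex of $L$ lies in a triangle, which already disposes of your triangle-free/$4$-cycle branch, since then $L=\emptyset$ would force the conclusion via Theorem~\ref{thm:K-K-Y}), take a maximum triangle packing $\T$ through $x$, and use the digraph of ``witness'' vertices ($v\in T$ with $\|v,U\|=3$) to reroute triangles along a path to $T_0$ so that the set $B$ of external vertices fully joined to $T_0$ grows to size exactly $2$, producing a $K_5$-like block $T^*=G[T_0\cup B]$. A second round of rerouting then yields a family $\mathcal R^*$ of triangles with the key property that no outside vertex has three neighbours on any member, so that deleting all of $V(\mathcal R^*)$ at once costs each surviving vertex only $2$ per deleted triangle --- this is what makes your inequality close, and it is exactly the step your single-cycle deletion cannot achieve when every individual triangle carries a large $\beta$.

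Two further unresolved points: your base case invokes the classification of graphs without two disjoint cycles but does not execute it (the hypothesis $|H_2|-|L_2|\ge 4$ does not bound $\delta(G)$ from below, so minimum-degree forms of that classification do not apply directly; the paper needs two separate lemmas, including a contraction argument and a $2$-core analysis excluding $SK_5$). And the boundary situation where the deletion leaves exactly $3(k-1)-1$ vertices is not a finite check: in the paper it requires adding a dominating vertex, extracting a triangle packing of the augmented graph, and a degree-sum argument on four specified vertices to manufacture the $k$-th cycle. As written, the proposal establishes a correct reduction template but leaves the theorem's core unproved.
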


Theorem~\ref{thm:2k+t} is sharp, as witnessed by the graph $SK_{3k-1}$.  
Let $u$ be the newly created vertex (of degree $2$) and observe that $L_{k}(SK_{3k-1}) = \{ u \}$, $|H_{k}(SK_{3k-1})| = 3k - 1$, and $G$ contains 
$k-1$ disjoint triangles. 
However since $|SK_{3k-1}|=3k$, any set of $k$ disjoint cycles must partition $V(SK_{3k-1})$ into triangles, but the vertex $u$ is not contained in any triangle.  
As above, $3k$ vertices are  necessary for the existence of $k$ cycles.  
However, if the bound on $|H_k(G)|-|L_k(G)|$ in Theorem~\ref{thm:2k+t} is slightly strengthened, then the condition $|G| \geq 3k$ holds 
automatically.

\begin{cor}\label{cor:2k+t+1}
Let $k \geq 2$ be an integer and $G$ be a graph.  Let $t$ be the maximum number of disjoint triangles contained 
in $G$.  If
\[ |H_{k}(G) | - |L_{k}(G) | \geq 2k + t + 1,\]
then $G$ contains $k$ disjoint cycles.
\end{cor}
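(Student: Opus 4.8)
The plan is to deduce Corollary~\ref{cor:2k+t+1} from Theorem~\ref{thm:2k+t} by showing that the strengthened hypothesis $|H_{k}(G)|-|L_{k}(G)| \ge 2k+t+1$ already forces $|G| \ge 3k$; once that is established, Theorem~\ref{thm:2k+t} applies verbatim (since $2k+t+1 \ge 2k+t$) and produces the required $k$ disjoint cycles. So the whole task reduces to the implication: if $|G| \le 3k-1$, then $|H_{k}(G)| \le 2k+t$, which, as $|L_{k}(G)| \ge 0$, contradicts the hypothesis.

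To prove this implication, suppose $n := |G| \le 3k-1$ and set $h := |H_{k}(G)|$. Every vertex of $H_{k}(G)$ has degree at least $2k$ in $G$, hence at most $n-1-2k \le k-2$ non-neighbours in $G$, so within any subset of $H_{k}(G)$ each vertex misses at most $k-2$ others. If $h \le 2k$ we are done, since then $h \le 2k \le 2k+t$. Otherwise $2k+1 \le h \le 3k-1$, and the complement graph $\overline{G}[H_{k}(G)]$ has maximum degree at most $k-2$, so by the Hajnal--Szemer\'edi theorem it admits an equitable proper colouring with $k-1$ colour classes. Each colour class is a clique of $G$; the classes are pairwise disjoint, have sizes $c_{1},\dots,c_{k-1}$ summing to $h$, and together contain $\sum_{i}\lfloor c_{i}/3\rfloor$ pairwise disjoint triangles of $G$, so $t \ge \sum_{i}\lfloor c_{i}/3\rfloor$. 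A direct computation with the equitable class sizes (using $2k+1 \le h \le 3k-1$) gives $\sum_{i}\lfloor c_{i}/3\rfloor \ge \min\{\,h-2(k-1),\ k-1\,\} \ge h-2k$, so $t \ge h-2k$, i.e.\ $h \le 2k+t$. In every case $|H_{k}(G)| = h \le 2k+t$, contradicting $|H_{k}(G)|-|L_{k}(G)| \ge 2k+t+1$; therefore $|G| \ge 3k$ and the corollary follows from Theorem~\ref{thm:2k+t}. (One can avoid the Hajnal--Szemer\'edi theorem: when $h$ is near $3k$, delete vertices of $H_{k}(G)$ down to exactly $3(k-1)$, observe that the induced subgraph then has minimum degree at least $2(k-1)$, and apply the Corr\'adi--Hajnal Theorem~\ref{thm:C-H} with parameter $k-1$ — its $k-1$ disjoint cycles are forced to be triangles; for smaller $h$ one applies Theorem~\ref{thm:C-H} directly to $G[H_{k}(G)]$ and argues that enough of the guaranteed cycles are triangles.)

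The step I expect to be the main obstacle is obtaining a lower bound on $t$ that is sharp enough when $h$ is close to $3k$: a greedy triangle packing in $G[H_{k}(G)]$, or the clique partition coming merely from $\chi(\overline{G}[H_{k}(G)]) \le k-1$ without any control on the class sizes, yields only about $(h-2k)/3$ disjoint triangles and already breaks down near $h = 2k+4$. What rescues the argument is forcing the triangles to materialise — either through the equitability of the Hajnal--Szemer\'edi colouring (which forbids ``one large clique and many tiny ones''), or by trimming $H_{k}(G)$ to exactly $3(k-1)$ vertices so the Corr\'adi--Hajnal cycles have no room to be longer. The remainder is routine: the non-neighbour count, the arithmetic of the equitable class sizes, and the small cases $k \in \{2,3,4\}$, where $\overline{G}[H_{k}(G)]$ has maximum degree at most $2$.
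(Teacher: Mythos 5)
Your proposal is correct, and it shares the paper's top-level reduction --- since $2k+t+1\ge 2k+t$, everything comes down to ruling out $|G|\le 3k-1$ --- but the way you rule out small $|G|$ is genuinely different. The paper first disposes of $k=2$ via Lemmas~\ref{lemma:trianglefree} and~\ref{lemma:2core}, then deletes the $|G|\bmod 3$ vertices outside a maximum triangle packing and applies Theorem~\ref{thm:2k+t} \emph{recursively} with $k'=\lfloor |G|/3\rfloor$ to the resulting $3k'$-vertex graph; the $k'$ disjoint cycles it yields are forced to be triangles, giving $t=k'$ and then $|H_{k'}(G')|\ge 3k'+1>|G'|$, a contradiction. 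You instead work inside $G[H_k(G)]$: when $|G|\le 3k-1$ each vertex of $H_k(G)$ has at most $k-2$ non-neighbours, so the complement of $G[H_k(G)]$ has maximum degree at most $k-2$, and an equitable $(k-1)$-colouring (Hajnal--Szemer\'edi) partitions $H_k(G)$ into $k-1$ cliques of sizes in $\{2,3\}$ or $\{3,4\}$, yielding at least $\min\{h-2k+2,\,k-1\}\ge h-2k$ disjoint triangles and hence $h\le 2k+t$, against $h\ge 2k+t+1$. I checked the arithmetic in both ranges of $h$ and it is sound; your route handles $k=2$ uniformly (no base-case lemmas) and avoids re-invoking the main theorem for the counting step, at the price of importing Hajnal--Szemer\'edi, which in the regime used here is of Corr\'adi--Hajnal strength anyway. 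One caution: your parenthetical Hajnal--Szemer\'edi-free variant does not go through in the middle range $2k+1\le h\le 3k-4$. Applying Theorem~\ref{thm:C-H} directly to $G[H_k(G)]$ produces disjoint \emph{cycles}, not triangles, and you need triangles to lower-bound $t$; the trimming-to-$3(k-1)$ trick only works once $h\ge 3k-3$ (that is when the minimum-degree hypothesis of Theorem~\ref{thm:C-H} with parameter $k-1$ is met), so for the remaining values of $h$ the equitable-colouring argument, or something equivalent, is genuinely needed.
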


Corollary~\ref{cor:2k+t+1} is sharp since $K_{3k-1}$  contains only $k-1$ disjoint cycles and $|H_{k}(K_{3k-1})| - |L_{k}(K_{3k-1})| = 3k-1$.  
Further, Corollary~\ref{cor:2k+t+1} implies the following stronger version of Theorem~\ref{thm:D-E}.

\begin{cor}\label{cor:3k}
Let $k \geq 2$ be an integer and $G$ be a graph with $|H_{k} (G)| - |L_{k}(G)| \geq 3k$.  Then $G$ contains $k$ disjoint cycles.
\end{cor}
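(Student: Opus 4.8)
The plan is to derive Corollary~\ref{cor:3k} directly from Corollary~\ref{cor:2k+t+1}, splitting on the value of $t$, the maximum number of disjoint triangles in $G$. The only quantities that enter are $t$ and $|H_k(G)|-|L_k(G)|$, so the argument should be a short two-case dichotomy with no graph-theoretic work beyond what is already packaged in Theorem~\ref{thm:2k+t}.

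First I would handle the case $t \geq k$. Here $G$ by definition contains $k$ pairwise disjoint triangles, and triangles are cycles, so $G$ already contains $k$ disjoint cycles and there is nothing more to prove. (Note this also forces $|G| \geq 3k$, so no hidden order hypothesis is needed.)

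Next, suppose $t \leq k-1$. Then
\[
2k + t + 1 \;\leq\; 2k + (k-1) + 1 \;=\; 3k \;\leq\; |H_k(G)| - |L_k(G)|,
\]
so the hypothesis of Corollary~\ref{cor:2k+t+1} is satisfied, and that corollary yields $k$ disjoint cycles in $G$. Combining the two cases proves the statement for every $k \geq 2$.

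I do not anticipate a real obstacle here: all of the difficulty lives upstream in Theorem~\ref{thm:2k+t} (and the removal of the $|G|\geq 3k$ hypothesis in Corollary~\ref{cor:2k+t+1}), while Corollary~\ref{cor:3k} is merely the clean numerical consequence obtained by observing that the ``penalty'' term $t$ is at most $k-1$ unless $G$ trivially already has $k$ disjoint cycles. The only point to be careful about is to state the trivial case ($t\ge k$) explicitly, since for such $G$ the bound $2k+t+1$ could exceed $3k$ and Corollary~\ref{cor:2k+t+1} would not directly apply.
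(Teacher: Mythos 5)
Your proof is correct and is exactly the route the paper intends: the paper states that Corollary~\ref{cor:2k+t+1} implies Corollary~\ref{cor:3k} and leaves the deduction implicit, which is precisely your two-case split on whether $t\ge k$ (trivial) or $t\le k-1$ (so $2k+t+1\le 3k$).
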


Observe that the special case  $H_{k}(G) = V(G)$ of Corollary~\ref{cor:3k} is equivalent to Theorem~\ref{thm:C-H} for $k \geq 2$. 
Using the techniques of Theorem~\ref{thm:2k+t}, we will also prove the following stronger version of Theorem~\ref{thm:D-Eplanar}.

\begin{thm}\label{thm:planar}
Let $k \geq 2$ be an integer and $G$ be a planar graph.  If
\[ |H_{k}(G)| - |L_{k}(G)| \geq  2k,\]
then $G$ contains $k$ disjoint cycles.
\end{thm}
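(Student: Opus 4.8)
The plan is to rerun the argument behind Theorem~\ref{thm:2k+t}, using Euler's formula to recover the ``$+t$'' that is available there but not here. Suppose for contradiction that $G$ is a planar graph with $|H_k(G)|-|L_k(G)|\ge 2k$ and no $k$ disjoint cycles, chosen with $|V(G)|$ smallest and then with $\|G\|$ smallest. Two facts about planar graphs will be used throughout: $\|G\|\le 3|G|-6$, and any triangle-free subgraph on $m$ vertices has at most $2m-4$ edges. First I would observe that the hypothesis $|G|\ge 3k$ of Theorem~\ref{thm:2k+t} comes for free. Indeed $2\|G\|\ge\sum_{v\in H_k(G)}\deg_G(v)\ge 2k\,|H_k(G)|\ge (2k)^2$, so $3|G|-6\ge 2k^2$, which already gives $|G|\ge 3k$ once $k\ge 4$; if $G$ is moreover triangle-free then $\|G\|\le 2|G|-4$ gives $|G|\ge k^2+2\ge 3k$ for all $k\ge 2$, and the finitely many residual possibilities with $k\in\{2,3\}$ and $|G|<3k$ are killed by the same counting together with the fact that a planar graph contains no $K_5$. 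Hence we may assume $|G|\ge 3k$. If the maximum number $t$ of disjoint triangles in $G$ is $0$, then Theorem~\ref{thm:2k+t} already produces $k$ disjoint cycles, and if $t\ge k$, then $k$ disjoint triangles do; so from now on $1\le t\le k-1$, and this is the only case requiring new work.

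For this case I would set up exactly the configuration used in the proof of Theorem~\ref{thm:2k+t}. Choose a family $\mathcal C=\{C_1,\dots,C_\ell\}$ of disjoint cycles in $G$ with $\ell$ maximum, so $\ell\le k-1$, and among all such families with $\sum_i|C_i|$ minimum; then every $C_i$ is chordless, and $R:=G-\bigcup_i V(C_i)$ is acyclic, hence a forest. The heart of the Theorem~\ref{thm:2k+t} proof is a discharging/counting estimate that bounds $|H_k(G)|-|L_k(G)|$ from above by a quantity depending on $\ell$, on the lengths $|C_i|$, and on $|R|$, in which the triangle cycles of $\mathcal C$ — of which there are at most $t$, being disjoint triangles of $G$ — are precisely the pieces on which the estimate is weakest; each such cycle contributes the extra unit that aggregates into the ``$+t$''. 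My task is to show these units are illusory when $G$ is planar.

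The recovery uses sparsity both locally and globally. For a triangle cycle $C_i$, the subgraph on $V(C_i)$ together with the vertices of $R$ adjacent to it and the edges between them is severely restricted by planarity: at most two vertices of $R$ are adjacent to all of $V(C_i)$ (else a $K_{3,3}$ subgraph), and, using that $\mathcal C$ was chosen so that no cycle $C_i$ can be rerouted through $R$ into a shorter cycle or into two disjoint cycles, this attachment graph is sparse enough that the $2m-4$ bound for triangle-free planar graphs applies to it and limits how many $R$-neighbours of $C_i$ can lie in $H_k(G)$. Globally, $\|G\|\le 3|G|-6$ caps the total number of edges incident with $H_k(G)$. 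Substituting these bounds into the Theorem~\ref{thm:2k+t} counting improves its upper estimate for $|H_k(G)|-|L_k(G)|$ by at least $t$, contradicting $|H_k(G)|-|L_k(G)|\ge 2k$, and the proof is complete.

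I expect the main obstacle to be exactly this last substitution: carrying the planar savings through the full case analysis of Theorem~\ref{thm:2k+t} without losing more than is gained, and in particular handling the extremal-looking configuration in which $\mathcal C$ consists almost entirely of triangles and $R$ is a large forest interleaved with them — there one must argue that Euler's formula forces enough vertices of degree at most $2k-2$ into $L_k(G)$, or keeps enough vertices of degree at least $2k$ out of $H_k(G)$, to absorb the $t$ units of slack. A minor but genuinely necessary additional task is verifying the small cases $k\in\{2,3\}$ and the graphs with $|G|$ only slightly above $3k$ by hand, since there the sparsity inequalities are tight.
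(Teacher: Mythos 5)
Your preliminary reductions are essentially sound: the Euler-formula argument that $|G|\ge 3k$ is forced (for $k\ge 4$ directly, and for $k\in\{2,3\}$ after checking that a planar graph cannot have $2k$ vertices of degree $\ge 2k$ on fewer than $3k$ vertices), and the observation that the cases $t=0$ and $t\ge k$ are immediate. But the entire content of the theorem lives in the case $1\le t\le k-1$, and there your proof stops at a declaration of intent. The claim that ``the heart of the Theorem~\ref{thm:2k+t} proof is a discharging/counting estimate'' into which one can ``substitute'' planar sparsity bounds to ``improve its upper estimate for $|H_k(G)|-|L_k(G)|$ by at least $t$'' is not substantiated, and it mischaracterizes that proof: Theorem~\ref{thm:2k+t} is not proved by a single global count over a maximum family of disjoint cycles in which each triangle costs one unit. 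It is a reachability/exchange argument on a maximum family of disjoint \emph{triangles} through a fixed vertex $x\in L$ (the digraphs $\mathcal D$, $\mathcal D^*$, the sets $\mathcal R$, $\mathcal R^*$, $B$), and the slack $t$ enters through inequalities of the form $t_{G'}\le t-r$ after deleting $r$ triangles, not as a per-triangle surcharge in a degree count. There is therefore no identified quantity for your planar savings to be substituted into, and you yourself flag this substitution as ``the main obstacle.'' As written, the proof has a genuine gap at its central step.

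For comparison, the paper's proof does not revisit the machinery of Theorem~\ref{thm:2k+t} at all; it is a short induction on $k$. Planarity gives average degree below $6$, hence $L\ne\emptyset$ (with a small computation when $k=3$ and an appeal to Corollary~\ref{cor:K-K-Y}); a minimality argument shows every edge at a vertex $x\in L$ lies in a triangle; fixing a plane embedding and choosing the triangle $T'\ni x$ whose interior contains fewest vertices, a $K_{3,3}$ argument shows \emph{at most one} vertex is adjacent to all of $T'$ (your general bound of two is not enough here --- the embedding and the innermost choice are what reduce it to one); deleting $T'$ then loses at most $2$ from $|H|$, gains at least $1$ back from $|L|$ (namely $x$), and loses at most $1$ more from the unique fully-joined vertex, giving $|H_{k-1}(G')|-|L_{k-1}(G')|\ge 2(k-1)$ and closing the induction. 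If you want to salvage your approach you would need to actually produce the counting inequality you allude to; the far shorter route is the one above.
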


The condition that $G$ be planar is necessary.  Indeed, consider the non-planar graph $SK_{5}$.  If $u$ is the newly created vertex, then $H_{2}(SK_{5}) = V(SK_{5}) - u$ and $L_{2}(SK_{5}) = \{u\}$, but $SK_{5}$ does not have two disjoint cycles.  The bound $2k$ in Theorem~\ref{thm:planar} is sharp (see, e.g. $K_{5} - e$ for $k=2$), however only for small values of $k$. Since the average degree of every planar graph is less than $6$, for $k\geq 5$ much weaker
restrictions provide existence of $k$ disjoint cycles in planar graphs.

Finally, we prove that the bound $2k$ is also sufficient if the graph $G$ contains no two disjoint triangles.

\begin{thm}\label{thm:1tri}
Let $k \geq 3$ be an integer and $G$ be a graph such that $G$ does not contain two disjoint triangles.  If
\[ |H_{k}(G)| - |L_{k}(G)| \geq 2k,\]
then $G$ contains $k$ disjoint cycles.
\end{thm}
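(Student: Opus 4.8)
\medskip
\noindent\textit{Proof plan.}
The idea is to follow the proof of Theorem~\ref{thm:2k+t}, using the absence of two disjoint triangles both to lower the threshold from $2k+t$ (here $t\le 1$) to $2k$ and to dispense with the hypothesis $|G|\ge 3k$. First suppose that $G$ is triangle-free. Then $t=0$, so $|H_k(G)|-|L_k(G)|\ge 2k = 2k+t$, and it remains only to verify $|G|\ge 3k$. From $|H_k(G)|-|L_k(G)|\ge 2k$ we get $|H_k(G)|\ge 2k$, so $H_k(G)\neq\emptyset$. If some two vertices of $H_k(G)$ are adjacent then, as $G$ is triangle-free, their neighborhoods are disjoint and $|G|\ge 4k$; and if $H_k(G)$ is independent then each of its at least $2k$ vertices sends all of its at least $2k$ edges into $V(G)\setminus H_k(G)$, so again $|G|\ge 2k+2k=4k$. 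In either case $|G|\ge 3k$ and Theorem~\ref{thm:2k+t} yields $k$ disjoint cycles.

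Now assume $G$ has a triangle; then $t=1$, and since any triangle of $G$ meets any other, $G-V(S)$ is triangle-free for \emph{every} triangle $S$ of $G$. The plan hinges on the reduction: \emph{it suffices to find a triangle $T=abc$ with}
\[|H_{k-1}(G-V(T))|-|L_{k-1}(G-V(T))|\ \ge\ 2(k-1).\]
Indeed, $G-V(T)$ is then a triangle-free graph to which the previous paragraph applies with $k-1$ in place of $k$ (here $k-1\ge 2$, and the bound $|G-V(T)|\ge 3(k-1)$ follows just as above, $G-V(T)$ being triangle-free with $|H_{k-1}(G-V(T))|\ge 2(k-1)$); so $G-V(T)$ contains $k-1$ disjoint cycles, which with $T$ form $k$ disjoint cycles in $G$. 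One may also record, via Corollary~\ref{cor:2k+t+1} applied with $k-1$, that $G$ already contains $k-1$ disjoint cycles, so in the hard cases one can work with the forest induced on the vertices outside an optimal such family.

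Comparing degrees in $G$ and in $G-V(T)$, the quantity $|H_{k-1}(G-V(T))|-|L_{k-1}(G-V(T))|$ drops below $|H_k(G)|-|L_k(G)|$ by at most $|H_k(G)\cap V(T)|\le 3$, adjusted by a term governed by the common neighbors of $a,b,c$ that have degree exactly $2k-1$ or $2k$ in $G$ (such vertices may leave $H_{k-1}$ or enter $L_{k-1}$), and partly compensated by medium-degree vertices of $G$ with few neighbors on $T$. So $T$ is ``good'' provided this total loss is at most $2$; when $|H_k(G)|-|L_k(G)|$ exceeds $2k$ by a little this is automatic, so the difficulty concentrates in the regime $|H_k(G)|-|L_k(G)|\approx 2k$. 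There I would use the hypothesis structurally: if a triangle has at least three common neighbors then these form an independent set (an edge among them together with a vertex of the triangle, and the opposite edge together with a third common neighbor, would be two disjoint triangles), with similar restrictions on second neighborhoods of triangles; combining such facts with a careful choice of $T$ --- say through a lowest-degree vertex that lies in a triangle, or through a triangle with smallest common neighborhood --- should furnish a good triangle in all but a few rigid configurations. In the remaining dense cases I expect to locate two disjoint triangles directly (via a minimum-degree/triangle-factor argument), contradicting the hypothesis unless $|G|$ is small, whereupon the finitely many leftover instances, together with the base value $k=3$, are handled by hand. This structural endgame is where the real work lies; it is also where $k\ge 3$ enters, since for $k=2$ the graph $K_5$ has no two disjoint triangles and $|H_2(K_5)|-|L_2(K_5)|=5\ge 4$ but contains no two disjoint cycles.
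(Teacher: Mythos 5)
Your first paragraph (the triangle-free case, including the check that $|G|\ge 4k$ there) is correct and matches what one needs, and the observation that $G-V(T)$ is triangle-free for every triangle $T$ is sound. The problem is that the entire weight of the argument rests on finding a triangle $T$ with $|H_{k-1}(G-V(T))|-|L_{k-1}(G-V(T))|\ge 2(k-1)$, and you do not establish that such a $T$ exists; you yourself concede that ``this structural endgame is where the real work lies.'' Worse, in the tight regime $h-\ell=2k$ the deletion count cannot certify a good triangle at all once every triangle avoids $L$: if $V(T)\subseteq V(G)\setminus L$ then deleting $V(T)$ only guarantees $|H_{k-1}(G-V(T))|\ge h-3-|B_H|$ and $|L_{k-1}(G-V(T))|\le \ell+|B_M|$, where $B_H$ (resp.\ $B_M$) are common neighbors of $T$ of degree exactly $2k$ (resp.\ $2k-1$), so the difference is at most guaranteed to be $2k-3=2(k-1)-1$, one short of what you need even when $B=\emptyset$. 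Since triangles meeting $L$ are precisely the ones the paper shows lead directly to $k$ disjoint cycles (Lemma~\ref{lemma:LNoTriangle}), the hard case is exactly the one where your reduction has no good triangle to find. There is also a quantitative obstruction you would hit inside the $L$-triangle analysis: when $V(T)\cup B$ spans a $K_5$, every vertex of $B$ loses three neighbors under deletion of $V(T)$, and the paper must instead delete all five vertices and invoke a different triangle-free lemma with the shifted thresholds $V_{\ge 2k+1}$, $V_{\le 2k-1}$ and the weaker bound $2k-2$ (Lemma~\ref{lemma:2k-1}), exploiting that then degrees drop by only one. Nothing in your sketch supplies this.

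The paper's actual proof is structured quite differently after the reduction fails: it first shows $|G|\ge 4k-1$ and $L\ne\emptyset$, then that no $L$-vertex lies in a triangle and that two $L$-vertices share at most one common neighbor (Lemmas~\ref{lemma:LNoTriangle} and~\ref{lemma:C4}), and then runs an iterated edge-contraction process at the $L$-vertices. Because each $L$-vertex has an independent neighborhood, each contraction preserves all degrees, shrinks $L$ by one (so $h-\ell$ grows by one), and creates at most one new disjoint triangle; hence either the accumulated slack in $h-\ell$ outpaces the triangle count and Theorem~\ref{thm:2k+t} applies, or $L$ is exhausted and Corollary~\ref{cor:K-K-Y} applies. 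Your plan contains no substitute for this mechanism, and the appeals to ``a few rigid configurations'' and ``finitely many leftover instances handled by hand'' are placeholders rather than proofs. As written, the proposal is an outline whose decisive step is missing and whose central reduction provably cannot close in the extremal case.
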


Our proofs  are based on the approach and ideas of Dirac and Erd\H{o}s \cite{D-E}. We also heavily use an extension of Theorem~\ref{thm:C-H}
from~\cite{K-K-Y} (Theorem~\ref{thm:K-K-Y} below).

The remainder of this paper is organized as follows.  The next section outlines the notation that we will use throughout the paper, and introduces some tools to be used in the proof.  In Section~\ref{sec:basecase} we will prove several lemmas for the base case, and in Section~\ref{sec:2k+t} we prove the main result.  
In Sections~\ref{sec:2k+t+1},~\ref{sec:planar} and~\ref{sec:1tri}, we use Theorem~\ref{thm:2k+t} to prove Corollary~\ref{cor:2k+t+1},  Theorem~\ref{thm:planar}, and Theorem~\ref{thm:1tri}, respectively.

\section{Notation and Tools}\label{sec:notation}

For disjoint sets $U, U' \subseteq V(G)$, we write $\| U, U' \|$ for the number of edges from $U$ to $U'$.  
If $U = \{ u \}$, then we will write $\| u, U' \|$ instead of $\| \{ u \}, U' \|$.
For $x\in V(G)$, $N_{G}(x)$ is the set of vertices adjacent to $x$ in $G$  and $d_{G}(x) = |N_{G}(x)|$.  
When the choice of $G$ is clear, we  simplify the notation to $N(x)$ and $d(x)$, respectively.  
For  an edge $xy\in E(G)$, $G\diagup xy$ denotes the graph obtained from $G$ by contracting $xy$, and
$v_{xy}$ denotes the vertex resulting from contracting $xy$.
By $\alpha(G)$ we denote the {\em independence number} of $G$.

We say that $x, y, z \in V(G)$  {\em form a triangle} $T = xyz$ in $G$ if $G[\{x,y,z\}]$ is a triangle.  
We say $v \in T$, if $v \in \{x, y, z \}$.  A set $\T$ of triangles is a set of subgraphs of $G$ such that each subgraph is a triangle and all the triangles are disjoint. 
For a set $\mathcal S$ of graphs, let $V(\mathcal S)=\bigcup \{V(S):S \in \mathcal S\}$.  For a  graph $G$, we write $t_{G}$ for the maximum number of disjoint triangles contained in $G$.  
\begin{equation}\label{pb1}
\parbox{6in}{\em When the graph $G$ is clear from context, we will use $t$ instead of $t_{G}$.  Similarly, when the integer $k$ is also clear, we will
 use $H$ and $L$ for $H_{ k}(G)$ and $L_{k}(G)$, respectively.  The sizes of $H$ and $L$ will be denoted by $h$ and $\ell$, respectively.}
\end{equation}

As shown in \cite{K-K-Y}, if a graph $G$ with $|G| \geq 3k$ and $\delta(G) \geq 2k - 1$ does not contain a large independent set, then with two exceptions, 
$G$ contains $k$ disjoint cycles:

\begin{thm}\cite{K-K-Y}\label{thm:K-K-Y}
Let $k \geq 2$.  Let $G$ be a graph with $|G| \geq 3k$ and $\delta(G) \geq 2k -1$ such that $G$ does not contain $k$ disjoint cycles.  Then, either
\begin{enumerate}
\item $\alpha(G) \geq |G| - 2k + 1$, or
\item $k$ is odd and $G = 2K_{k} \vee \overline{K_{k}}$, or
\item $k =2$ and $G$ is a wheel.
\end{enumerate}
\end{thm}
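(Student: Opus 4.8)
The plan is to prove the statement in contrapositive form: if $|G|\ge 3k$, $\delta(G)\ge 2k-1$, and none of outcomes~(1)--(3) holds, then $G$ has $k$ disjoint cycles. I would induct on $k$. For the base case $k=2$ the hypotheses read $|G|\ge 6$, $\delta(G)\ge 3$, $\alpha(G)\le |G|-4$ (the negation of~(1)), and (since $k=2$ is even, (2) is vacuous) $G$ is not a wheel; the goal is a contradiction. Here I would invoke the classical description of graphs with no two vertex-disjoint cycles (Lov\'asz/Dirac): such a graph is, up to vertices lying on no cycle, a $K_5$, a wheel, a $K_{3,m}$ with possibly edges inside the triple, or has a single vertex meeting all cycles. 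The condition $\delta\ge 3$ kills the ``feedback vertex'' case (a forest on $\ge 2$ vertices has a vertex of degree $\le 1$) and the $K_5$-with-attachments case; $|G|\ge 6$ excludes $K_5$ itself; and $\alpha(G)\le|G|-4$ excludes $K_{3,m}$, whose large side already gives $\alpha\ge|G|-3$. Only wheels survive, contradicting the assumption that~(3) fails.

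For the inductive step $k\ge 3$, assume the statement for $k-1$ and suppose $G$ meets the hypotheses but has no $k$ disjoint cycles. Since $\delta(G)\ge 2k-1\ge 2(k-1)$ and $|G|\ge 3(k-1)$, Theorem~\ref{thm:C-H} yields $k-1$ disjoint cycles. Among all families of exactly $k-1$ disjoint cycles I would fix $\mathcal C=\{C_1,\dots,C_{k-1}\}$ that first minimizes $\sum_i|C_i|$ and then satisfies a secondary optimality condition (for instance, maximizing $\|R\|$, where $R=G-V(\mathcal C)$). Because $\mathcal C$ cannot be extended to $k$ cycles, $R$ is a forest; because $\sum_i|C_i|$ is minimum, each $C_i$ is induced, each $v\in R$ has at most two neighbors on any cycle of length $\ge 4$ and these are nonconsecutive (two consecutive neighbors would yield a triangle shortening the cycle), while a triangle $C_i$ may absorb up to three neighbors of $v$. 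These extremal properties are the engine: whenever $v\in R$ has two neighbors flanking a single vertex $w$ of some $C_i$, rerouting through $v$ frees $w$ into $R$ without increasing the cover, and any resulting cycle inside $R$ produces a $k$-th disjoint cycle, a contradiction.

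The next step is a counting argument. Every $v\in R$ satisfies $\|v,V(\mathcal C)\|\ge 2k-1-d_R(v)$, and since $R$ is a forest, $\sum_{v\in R}d_R(v)\le 2(|R|-1)$. Summing gives a lower bound of about $(2k-3)|R|$ on the number of edges between $R$ and $V(\mathcal C)$, so by averaging some $C_i$ absorbs nearly the maximum permitted by the structural lemmas, and this is most efficient when the loaded cycles are triangles. I would then show that such near-saturation forces rigidity: the heavily loaded cycles are triangles, the $R$-vertices attach to them in a very constrained pattern, and $G$ collapses onto one of the extremal configurations. At that point the three outcomes separate: a ``dominating clique'' pattern reproduces essentially $K_{2k-1}$ joined to an independent set and hence a large independent set (outcome~(1)), a balanced pattern reproduces $2K_k\vee\overline{K_k}$ (outcome~(2)), and the wheel survives only for $k=2$.

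The main obstacle is this final step: converting tightness of the count into the exact exceptional structures, and in particular extracting the parity ``$k$ odd'' for $2K_k\vee\overline{K_k}$. This is where the real work lies, since near-extremal attachments admit many local patterns and one must show that all but the listed patterns admit an exchange creating the $k$-th cycle. The parity should emerge exactly as in Corr\'adi--Hajnal: once $G$ is reduced to covering $3k$ vertices by $k$ triangles, one needs to split the two odd cliques of $2K_k$ into within-clique pairs, which is impossible precisely when $k$ is odd, so for even $k$ this configuration is not a counterexample and the cycles exist. I would also isolate short cycles (triangles, with their larger absorption capacity, behave differently from longer cycles) as separate cases handled early, to keep the neighborhood bounds clean.
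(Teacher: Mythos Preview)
This theorem is not proved in the paper; it is quoted from \cite{K-K-Y} and invoked only through Corollary~\ref{cor:K-K-Y}. There is therefore no in-paper argument to compare your proposal against.

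On its own merits: your base case $k=2$ via the Lov\'asz classification of graphs with $\delta\ge 3$ and no two disjoint cycles is correct---$K_5$ is ruled out by $|G|\ge 6$, the $K_{3,m}$-type graphs by $\alpha(G)\le |G|-4$, and wheels by the assumption that (3) fails. The inductive step, however, is only a plan. You set up the standard extremal family and the counting that locates a heavily loaded cycle, but you explicitly flag the structural endgame---deducing that $G$ must be $2K_k\vee\overline{K_k}$ or have $\alpha(G)\ge |G|-2k+1$ once the count is tight---as ``where the real work lies,'' and you do not carry it out. That endgame is essentially the entire content of the theorem; the original proof in \cite{K-K-Y} devotes substantial effort to exactly this case analysis of exchange arguments. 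What you have written is a reasonable outline of an approach, not a proof.
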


We will use the following corollary of Theorem~\ref{thm:K-K-Y} throughout the paper.

\begin{cor}\label{cor:K-K-Y}
Let $k \geq 2$ be an integer and $G$ be a graph with $|G| \geq 3k$.  If $|H|  \geq 2k$ and $\delta(G) \geq 2k - 1$ (i.e. $L = \emptyset$),
 then $G$ contains $k$ disjoint cycles.
\end{cor}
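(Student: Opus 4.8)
The plan is to deduce this directly from Theorem~\ref{thm:K-K-Y} by ruling out each of its three exceptional conclusions under the hypotheses $|G|\ge 3k$, $|H|\ge 2k$, and $\delta(G)\ge 2k-1$. Suppose for contradiction that $G$ does not contain $k$ disjoint cycles. Then $G$ satisfies the hypotheses of Theorem~\ref{thm:K-K-Y} (indeed $L=\emptyset$ is exactly $\delta(G)\ge 2k-1$), so one of (1), (2), (3) holds, and I will show each is incompatible with $|H|\ge 2k$.

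First I would dispose of alternative (1), which is the main case. If $\alpha(G)\ge |G|-2k+1$, fix an independent set $I$ with $|I|=|G|-2k+1$. Every vertex of $I$ has all its neighbours in $V(G)\setminus I$, a set of size $2k-1$, so $d_G(v)\le 2k-1$ for all $v\in I$. Now I want to conclude that $|H|\le 2k-1$, contradicting $|H|\ge 2k$: a vertex of $H$ has degree at least $2k$, hence lies in $V(G)\setminus I$, so $|H|\le |V(G)\setminus I| = 2k-1$. The one subtlety is that a vertex $v\in I$ with $d_G(v)=2k-1$ is not in $L$ (since $L$ consists of vertices of degree at most $2k-2$) but also not in $H$; that is fine, as it only needs to be excluded from $H$, which the degree bound $d_G(v)\le 2k-1<2k$ gives. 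So (1) is impossible.

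For alternative (2), $G=2K_k\vee\overline{K_k}$ with $k$ odd: here $|G|=3k$, every vertex in one of the two $K_k$'s has degree $(k-1)+k+k = 3k-1$, and every vertex of the $\overline{K_k}$ has degree $2k$. Thus $H=V(G)$ and $|H|=3k\ge 2k$, so the size hypothesis does \emph{not} rule this out — instead I must check that $2K_k\vee\overline{K_k}$ actually \emph{does} contain $k$ disjoint cycles, contradicting our assumption. This is easy: pair each vertex of $\overline{K_k}$ with two vertices, one from each copy of $K_k$, to form $k$ disjoint triangles (a triangle needs one vertex from each part, or two from one $K_k$ and one from the other part). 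Hence (2) cannot occur either. For alternative (3), $k=2$ and $G$ is a wheel $W_n$ on $n\ge 5$ vertices: the hub has degree $n-1\ge 4=2k$, but every rim vertex has degree $3=2k-1$, so $H=\{\text{hub}\}$ and $|H|=1<4$, contradicting $|H|\ge 2k=4$. (Alternatively, a wheel on $\ge 6$ vertices contains two disjoint cycles; for $W_5=K_1\vee C_4$ the hub has degree $4$ but it fails $|H|\ge 4$.) The only step requiring a little care is case (1), namely being careful that degree exactly $2k-1$ excludes a vertex from $H$; everything else is a short direct check, so there is no substantial obstacle.
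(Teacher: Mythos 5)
Your overall strategy is the same as the paper's: apply Theorem~\ref{thm:K-K-Y} and rule out each of the three exceptional outcomes. Your treatment of alternatives (1) and (3) is correct and matches the paper. However, your handling of alternative (2) contains a genuine error. You misread $2K_k$ as the join of two copies of $K_k$ (equivalently $K_{2k}$); it is the \emph{disjoint union} of two copies of $K_k$, and only the join with $\overline{K_k}$ adds edges. Hence a vertex in either clique has degree $(k-1)+k=2k-1$, not $3k-1$, and a vertex of $\overline{K_k}$ has degree $2k$. Consequently $H$ consists exactly of the $k$ vertices of $\overline{K_k}$, so $|H|=k<2k$ and the hypothesis $|H|\ge 2k$ \emph{does} rule out this case directly --- which is precisely the paper's one-line argument.

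Because of the misreading, your fallback claim that $2K_k\vee\overline{K_k}$ contains $k$ disjoint triangles is false, and it must be: Theorem~\ref{thm:K-K-Y} lists this graph as an exception precisely because it has no $k$ disjoint cycles when $k$ is odd. Concretely, your proposed triangles take one vertex from each copy of $K_k$ plus one vertex of $\overline{K_k}$, but the two clique vertices lie in different components of $2K_k$ and are therefore nonadjacent, so no triangle is formed. (One can also check directly that no partition into $k$ triangles exists: a triangle meeting $\overline{K_k}$ in one vertex must take its other two vertices from a single copy of $K_k$, and since $k$ is odd the $k$ vertices of a copy cannot be matched up in pairs without leftover, while triangles wholly inside one clique waste three of its vertices.) The repair is simply the correct degree count, after which case (2) is dispatched by $|H|=k$ exactly as in the paper.
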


\begin{proof}
First, if $G = 2K_{k} \vee \overline{K_{k}}$, then $|H| = k$, a contradiction.  Next, if $\alpha(G) \geq |G| - 2k + 1$, 
then let $U$ be an independent set of size $|G| - 2k + 1$.  For each $u \in U$, $d(u) \leq 2k - 1$, so 
$H \subseteq V(G) \setminus U$ and $|H_{k}(G)| \leq 2k - 1$. Finally, if $k = 2$, then $G$ is not a wheel, since the wheel has only one vertex of degree at least $4$.  Therefore, by Theorem~\ref{thm:K-K-Y}, $G$ contains $k$ disjoint cycles.
\end{proof}

Call a graph $G$ minimal if among graphs with a certain property, $|G|$ is minimal, and subject to this, $\|G\|$ is minimal.  Dirac and Erd\H{o}s \cite{D-E} observed the following.

\begin{property}\label{prop:minimal}
Let $k \geq 2$ be an integer and $f: \N \to \Z$ a function.  
Suppose $G$ is  minimal among  graphs without $k$ disjoint cycles satisfying $|H| - |L| \geq f(k)$.  Then,
\begin{enumerate}
\item $\delta(G) \geq 2$, and 
\item if $uv \in E(G)$, then either $d(u) \in \{ 2k-1, 2k \}$ or $d(v) \in \{ 2k-1, 2k \}$.
\end{enumerate}
\end{property}

Indeed, if such a graph $G$ contained a vertex $v$ with $d(v) \leq 1$, then $G - v$ is a smaller counterexample.  Similarly, if \emph{(2)} does not hold, then $G - uv$ is a smaller counterexample.

\section{Graphs with two disjoint cycles.}\label{sec:basecase}

In this section we will prove several lemmas that will serve as the base case $k=2$ for our various proofs. 
In notation, we will use convention~\eqref{pb1} with $k=2$.

\begin{lemma}\label{lemma:trianglefree}
Every triangle-free graph  $G$ with $h \geq \ell+ 4$ contains $2$ disjoint cycles.
\end{lemma}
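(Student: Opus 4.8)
The plan is to prove the contrapositive: if $G$ is triangle-free and has no two vertex-disjoint cycles, then $h\le\ell+3$ (in fact the argument will give either $h\le 3$ or $h\le\ell-1$). The starting point is Lov\'asz's classification of graphs with no two vertex-disjoint cycles: every such graph either has a vertex $v$ with $G-v$ a forest, or is a subgraph of $K_5$, of a wheel, or of a graph obtained from $K_{3,n}$ by adding edges inside the part of size three. First I would use triangle-freeness to eliminate the exceptional families. If $G$ is a subgraph of a wheel, then deleting the hub leaves a subgraph of a cycle, hence a forest, so $G$ falls into the first alternative. If $|G|\le 5$ (the $K_5$ case), then triangle-freeness forces any vertex of degree at least $4$ to be universal with an independent neighbourhood, so at most one such vertex exists and $h\le 1$. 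If $G$ is a subgraph of $K_{3,n}$ together with edges inside the triple $A$, then every vertex outside $A$ has degree at most $3$, so $H\subseteq A$ and $h\le 3$. In each of these last two situations $h\le 3\le\ell+3$.

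The remaining and main case is that $F:=G-v$ is a forest for some vertex $v$. If $F$ has no edge then every edge of $G$ meets $v$, so $h\le 1$; otherwise let $c\ge 1$ be the number of components of $F$ that contain an edge. The argument is then a short leaf count. Every leaf $w$ of $F$ satisfies $d_G(w)\le d_F(w)+1=2$, so $w\in L$, and hence $\ell$ is at least the number of leaves of $F$. On the other hand, every $u\in H\setminus\{v\}$ satisfies $d_F(u)\ge d_G(u)-1\ge 3$, so $h\le 1+|\{u\in V(F):d_F(u)\ge 3\}|$. Finally, applying $\sum_{u}(d_T(u)-2)=-2$ to each nontrivial tree component $T$ of $F$ shows that $T$ has at least $2+|\{u\in V(T):d_T(u)\ge 3\}|$ leaves; summing over the $c$ nontrivial components, $F$ has at least $2c+|\{u\in V(F):d_F(u)\ge 3\}|\ge 2+(h-1)=h+1$ leaves. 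Chaining these inequalities gives $\ell\ge h+1$, so certainly $h\le\ell+3$, as required.

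I do not anticipate a serious obstacle: the only delicate point is invoking Lov\'asz's theorem in the correct form and checking that triangle-freeness collapses the exceptional families, after which everything reduces to elementary degree counting in a forest. Should one wish to avoid Lov\'asz's theorem, one can instead take a shortest cycle $C$ (of length at least $4$ by triangle-freeness), use that $G-V(C)$ is a forest, and analyse how the outside vertices attach to $C$ by rerouting arguments; this is feasible but messier, since a leaf of $G-V(C)$ may have several neighbours on $C$ and need not lie in $L$, and one must still isolate the bipartite case by hand.
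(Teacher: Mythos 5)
The forest-plus-one-vertex case of your argument is correct and clean: the leaf count in $F=G-v$ does give $\ell\ge h+1$ there (and does not even use triangle-freeness). The gap is in the classification you start from. Lov\'asz's theorem in the form you quote is only valid for graphs of minimum degree at least $3$; for a general graph with no two disjoint cycles the exceptional families must be taken up to subdividing edges and attaching trees, and such graphs fall into none of your four alternatives. A concrete witness is $K_{3,3}$ with one edge subdivided: it is triangle-free and has $7$ vertices, so it cannot contain two disjoint cycles (every cycle has length at least $4$); it is not a subgraph of $K_5$, of a wheel, or of $K_{3,n}$ plus edges inside the $3$-side (its independence number is $3$, so it has no independent set on all but $3$ vertices); and one checks directly that $G-v$ contains a $4$-cycle for every vertex $v$. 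This particular graph satisfies $h\le\ell+3$ trivially, but your case analysis simply never reaches graphs of this type, and covering subdivisions of $K_{3,p}$, of wheels and of $K_5$ with trees attached is genuine extra work: the natural substitute, taking a $3$-set $X$ with $G-X$ a forest, destroys your leaf count, since a leaf of $G-X$ can have degree up to $4$ in $G$ and hence lie in $H$ rather than $L$. (A smaller slip: for a subgraph of a wheel, deleting the hub leaves a subgraph of a cycle, which may be the entire cycle and hence not a forest; the correct observation there is simply that only the hub can have degree at least $4$, so $h\le1$.)

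For comparison, the paper avoids any classification theorem. It takes a minimal counterexample, uses Property~\ref{prop:minimal} and Corollary~\ref{cor:K-K-Y} to produce a vertex $x$ of degree exactly $2$, contracts an edge at $x$ and invokes minimality to find a triangle in the contraction, hence a $4$-cycle $C$ in $G$; a short edge count on $W=V(G)\setminus C$, using $h-\ell\ge4$ together with triangle-freeness to bound $\|W,C\|\le|W|+2$, then forces $\|G[W]\|\ge|W|$ and a second cycle. If you want to keep your route, you must either apply Lov\'asz's theorem to the topological core of $G$ and then undo the suppressions and tree attachments case by case, or switch to an argument of the paper's kind.
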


\begin{proof}
Let $G$ be a minimal counterexample.  As $G$ is triangle-free, and $h \ge 4$, $|G|\ge8$. By Property~\ref{prop:minimal}, 
 $\delta (G) \geq 2$, and by Corollary~\ref{cor:K-K-Y}, $\delta(G)=2$. Say $d(x)=2$ and $N(x)=\{y,z\}$.
 By Property~\ref{prop:minimal}, $d(y), d(z) \in \{ 3,4 \}$.  Set $G'=G\diagup xy$. Since $G$ is triangle-free, $d_{G'}(v)=d_G(v)$ for all $v\in V(G) \setminus \{x, y \}$. As $x \in L$, this implies $|H_{2}(G')| \geq |L_{2}(G')| + 4$.  Since $G$ is minimal,
  $G'$ has a triangle, say $v_{xy} z w$. Then  $C:=xyzw$ is a $4$-cycle in $G$.  Let $W = V(G) \setminus C$.

As $x \in L$, $|C\cap H|-|L\cap H|\leq 2$.  So, since $h-\ell\geq 4$, $|H\cap W|-  |L\cap  W| \geq 2$.  Thus 
\begin{equation}\label{0128}
\sum_{u\in W}d(u)\ge3|W|+|H\cap W|-|L\cap  W|\ge3|W|+2.
\end{equation}

Each $v \in W$ has no two adjacent neighbors as $G$ is triangle free, and is not adjacent to
$x$  as $N(x)\subset C$. Thus if $\|v,C\|\ge2$ then $N(v)\cap C=\{y,z\}$. As $d(y)\le4$, there are at most two such $v$. So $\|W,C\|\le|W|+2$. Hence
by~\eqref{0128},
\[  2\|G[W]\|= \sum_{u\in W}d(u)-\|W,C\|
 \geq (3 |W| + 2) - (|W| + 2) = 2|W| .\]
Therefore $\|G[W] \| \geq |W|$, and so $G[W]$ contains a cycle (disjoint from $C$).
\end{proof}

The {\em 2-core} of a graph $G$ is the union of all  $G'\subseteq G$ with $\delta(G')\ge2$.  It can be obtained from $G$ by iterative deletion
of vertices of degree at most $1$.

\begin{lemma}\label{lemma:2core}
Suppose the $2$-core of $G$ contains at least $6$ vertices, and it is not isomorphic to $SK_{5}$.  If $h \geq \ell + 4$, then $G$ contains $2$ disjoint cycles.
\end{lemma}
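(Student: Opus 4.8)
The plan is to reduce Lemma~\ref{lemma:2core} to Lemma~\ref{lemma:trianglefree} and Corollary~\ref{cor:K-K-Y} by working inside the $2$-core and, when triangles are present, destroying a triangle by a contraction that does not hurt the degree hypothesis. First I would let $G^*$ be the $2$-core of $G$; since a cycle of $G^*$ is a cycle of $G$, it suffices to find $2$ disjoint cycles in $G^*$. Deleting degree-$\le 1$ vertices only removes vertices from $L$ (a vertex of degree $\le 1$ lies in $L$ since $2k-2 = 2$) and cannot raise the degree of any surviving vertex, so $H_2(G^*) \supseteq H \cap V(G^*)$ and $L_2(G^*) \subseteq L$; hence $h^* - \ell^* \ge h - \ell \ge 4$ still holds, where $h^*,\ell^*$ refer to $G^*$. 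Note $|G^*| \ge 6$ by hypothesis. So I may assume from the start that $\delta(G) \ge 2$ and $|G| \ge 6$, and I want to produce $2$ disjoint cycles.

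Next I would take a minimal counterexample $G$ (minimal in $|G|$, then in $\|G\|$) to the statement "$\delta(G)\ge 2$, $|G|\ge 6$, $G \not\cong SK_5$, and $h \ge \ell+4$ imply $2$ disjoint cycles." By Property~\ref{prop:minimal}, $\delta(G)\ge 2$ and every edge has an endpoint of degree in $\{3,4\}$; by Corollary~\ref{cor:K-K-Y} (applied to $G$, whose $2$-core is $G$ itself with $|G|\ge 6$), since $h\ge 4$ and $G$ is not one of the two exceptional graphs, we must have $L \ne \emptyset$, so $\delta(G) = 2$. If $G$ is triangle-free, Lemma~\ref{lemma:trianglefree} finishes it, so $G$ contains a triangle $T = abc$. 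The key move is to contract an edge of a triangle to kill it while preserving the hypotheses. Concretely, pick a vertex $x$ of degree $2$ with neighbors $y,z$; by Property~\ref{prop:minimal}(2), $d(y),d(z)\in\{3,4\}$. I would try to contract a suitable edge incident to a low-degree vertex: contracting $xy$ merges $x$ into $y$, and since $d(x)=2$, the only possible new triangles or degree changes are local. The subtlety (compared to Lemma~\ref{lemma:trianglefree}) is that $G$ need not be triangle-free, so the contraction $G/xy$ may create new triangles or, worse, may leave a triangle that still blocks induction; one must instead argue that $G/xy$ has fewer vertices and still satisfies $|H_2| \ge |L_2| + 4$, is not $SK_5$, and has a $2$-core on $\ge 6$ vertices — then minimality gives $2$ disjoint cycles in $G/xy$, which lift to $2$ disjoint cycles in $G$ (an edge-contraction only merges two cycles that share $v_{xy}$, and two \emph{disjoint} cycles avoiding $v_{xy}$, or one through it, lift straightforwardly since $x$ has degree $2$). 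The degree bookkeeping: removing/merging $x$ (which is in $L$) can only decrease $\ell$ by one and change at most $d(y)\le 4$ other degrees, so one checks $|H_2(G/xy)| - |L_2(G/xy)| \ge h - \ell - (\text{small loss})$; because we had slack, or because $x\in L$ buys us one, this should stay $\ge 4$.

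The main obstacle I anticipate is exactly this case analysis ensuring the contracted graph still meets all three structural hypotheses — in particular that its $2$-core does not shrink below $6$ vertices and that it is not $SK_5$ (or one of the Corollary~\ref{cor:K-K-Y} exceptions), and handling the boundary configurations where $|G| = 6$ or $7$ and contraction drops us to $5$ vertices. In those small-order exceptional cases I would argue directly: with $|G|\le 7$, $\delta(G)=2$, and $h \ge \ell+4 \ge 4$, the graph is dense enough (at least $4$ vertices of degree $\ge 4$ among $\le 7$ vertices) that a short ad hoc argument — or direct inspection, using that $G\ne SK_5$ — locates two disjoint cycles. Everything else is routine degree-counting in the spirit of the proof of Lemma~\ref{lemma:trianglefree}.
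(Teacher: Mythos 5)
There is a genuine gap, and it sits exactly where the real work of this lemma lies: the case in which the degree-$2$ vertex $x$ has adjacent neighbours, i.e.\ $yz\in E(G)$ and $x$ lies in a triangle. Your plan is to contract $xy$ and claim the hypothesis $h-\ell\ge 4$ survives ``because we had slack, or because $x\in L$ buys us one.'' It does not: when $z\in N(x)\cap N(y)$, the contraction lowers $d(z)$ by $1$ and gives $v_{xy}$ degree $d(y)-1$, so if $d(y)=d(z)=4$ the graph $G\diagup xy$ loses two vertices from $H$ while losing only the single vertex $x$ from $L$, and the difference can drop to $3$. (Your appeal to Property~\ref{prop:minimal}(2) to conclude $d(y),d(z)\in\{3,4\}$ is also unjustified in this minimality class: deleting the edge $xy$ leaves $x$ with degree $1$, so the $2$-core of the reduced graph may fall below $6$ vertices or become $SK_5$, and minimality then gives nothing. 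Indeed the paper shows that in the triangle case one necessarily has $d(y),d(z)\ge 6$, so the bound you want is actually false there.) The paper's treatment of this case is entirely different and is the heart of the proof: since $G$ has no two disjoint cycles and $xyz$ is a triangle, $G':=G-\{x,y,z\}$ is a forest; a degree count over $V(G')$, using $h-\ell\ge4$, $x\in L$, and $N(x)=\{y,z\}$, forces $\|V(G'),\{y,z\}\|\ge |G'|+5$ and hence $d(y),d(z)\ge6$; one then deletes just the vertex $x$, which keeps every degree of $G-x$ equal to its degree in $G$, raises $h-\ell$ to at least $5$ (which in particular rules out $SK_5$), and applies minimality. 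Nothing in your proposal substitutes for this step, and the ``small-order ad hoc inspection'' you defer to is not where the difficulty lives.

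Two smaller points. In the non-triangle case ($yz\notin E(G)$) your contraction does work, for the reason the paper gives: all degrees are preserved, $x$ leaves $L$, so $h-\ell$ increases to at least $5$, which simultaneously certifies that $G\diagup xy$ is not $SK_5$ and still its own $2$-core on at least $6$ vertices; you should state this rather than list it as an anticipated obstacle. Also, your reduction to the $2$-core asserts $H_2(G^*)\supseteq H\cap V(G^*)$ ``since deletion cannot raise degrees'' --- that containment is backwards (a $2$-core vertex can lose pendant neighbours and fall out of $H$); the inequality $h^*-\ell^*\ge h-\ell$ does hold, but one must delete degree-at-most-$1$ vertices one at a time and verify that each single deletion cannot decrease $h-\ell$.
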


\begin{proof}
Let $G$ be a minimal counterexample.  If there exists a vertex of degree at most $1$, then removing it yields a smaller counterexample.  So $G$ is its own $2$-core and $\delta(G)\ge2$. Also $|G|\ge6$ and by Corollary~\ref{cor:K-K-Y},  $L\ne \emptyset$. Thus $h\ge 5$, and $|G|\ge7$, since $G$ is not isomorphic to $SK_5$. 
 Pick $x\in L$. Let $N(x) = \{ y, z \}$. 
 
  Suppose $yz \notin E(G)$. Set $G'=G\diagup xy$. Then $|G'|=|G|-1 \ge 6$.  
Since $d(x) = 2$, all $y\in V(G')$ satisfy $d_{G'}(v) = d_{G}(v)$. So  $G'$ is its own $2$-core,  
 $|H_{2}(G')|-|L_2(G')| = h-\ell+1 \ge 5$, and $G'$ is not isomorphic to $SK_5$. 
    As $|G'|< |G|$, by the minimality of $G$, $G'$ has two disjoint cycles. But then so does $G$. 

Otherwise $yz \in E(G)$. Now $xyz$ is a triangle in $G$, so $G': = G -xyz$ is acyclic, and $\|G'\|<|G'|$.  Since $h \geq \ell + 4$ 
and $x\in L$, we have $|H \cap V(G')|-|L \cap V(G')|\ge3$. So $\sum_{v \in V(G')} d_{G}(v) \geq 3|G'| + 3$.  As $N(x)=\{y,z\}$, 
\[ \|V(G'),\{y,z\}\|=\|V(G'),V(G)\setminus V(G')\|\ge3|G'|+3-2(|G'|-1)\ge|G'|+5. \]
Thus $d(y),d(z)\ge6$. Let $G^*=G-x$. Then $|G^*|\ge6$, $d_{G^*}(y),d_{G^*}(z)\ge5$, and $d_{G^*}(v)=d_G(v)$ for all $v\in V(G^*)$. So $|H_2(G^*)|-|L_2(G^*)|\ge5$
and $G^*$ coincides with its $2$-core.
 As $|G^*|< |G|$, by the minimality of $G$, $G^*$ has two disjoint cycles. But then so does $G$.
\end{proof}

\begin{lemma}\label{lemma:1tr}
Every graph $G$ containing a triangle  $X=x_1x_2x_3$ has two disjoint cycles provided   
(a)   $|H \setminus X | -|L \setminus X| \geq  2$ and (b) $\|v,X\|\le2$ for all $v\in V(G)\setminus X$.
\end{lemma}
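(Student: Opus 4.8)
The plan is to argue by minimal counterexample. Suppose $G$ has a triangle $X=x_1x_2x_3$ satisfying (a) and (b) but contains no two disjoint cycles, and among all such graphs $|G|$ is minimum and, subject to that, $\|G\|$ is minimum; put $W=V(G)\setminus X$. A routine check gives $\delta(G)\ge 2$: the vertices of $X$ have degree at least $2$, and if some $v\in W$ had $d(v)\le 1$ then $G-v$ would still contain the triangle $X$, still satisfy (b), and still satisfy (a)---deleting $v\in L$ decreases $|L\setminus X|$ by $1$, while the resulting drop in the degree of at most one neighbour of $v$ can cost at most $1$ in $|H\setminus X|-|L\setminus X|$---and still have no two disjoint cycles, contradicting minimality. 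If $G[W]$ contained a cycle it would be disjoint from $X$; hence $F:=G[W]$ is a forest, with $c$ components, say. Note $F$ is not edgeless: otherwise every $v\in W$ would satisfy $d(v)=\|v,X\|\le 2$, so $W\subseteq L$, contradicting (a); thus $|W|\ge c+1$.

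Next I would run a degree count. Since $\delta(G)\ge 2$ and, by (a), $W$ has at least two more vertices of degree $\ge 4$ than of degree $\le 2$, we get $\sum_{v\in W}d(v)\ge 3|W|+2$. Combining this with $\|F\|=|W|-c$ and the identity $\sum_{i=1}^{3}d(x_i)=6+\|W,X\|=6+\sum_{v\in W}d(v)-2\|F\|$ gives
\[
d(x_1)+d(x_2)+d(x_3)\ \ge\ |W|+8+2c.
\]
Since each $d(x_i)\le |W|+2$, no two of the three can be at most $c+2$, so after relabelling $d(x_2),d(x_3)\ge c+3$; equivalently, each of $x_2$ and $x_3$ has at least $c+1$ neighbours in $F$. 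As $F$ has only $c$ components, each of $x_2,x_3$ then has two neighbours in a common component of $F$, and joining two such neighbours by a path in that (tree) component and closing it through the vertex produces a cycle through $x_2$ (respectively $x_3$) whose remaining vertices all lie in $W$.

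Finally I would assemble two disjoint cycles. If the cycle through $x_2$ and the cycle through $x_3$ just described can be chosen using paths in \emph{different} components of $F$, they are disjoint and we are done. Otherwise the components of $F$ containing two neighbours of $x_2$ and those containing two neighbours of $x_3$ form a single common component $F^{*}$, while every other component of $F$ contains at most one neighbour of $x_2$ and at most one of $x_3$; in particular $d(x_2)\le c+1+|N(x_2)\cap V(F^{*})|$ and likewise for $x_3$. I would then either route the two cycles disjointly inside $F^{*}\cup\{x_2,x_3\}$ (which works unless $F^{*}$ together with the placement of $N(x_2)\cap V(F^{*})$ and $N(x_3)\cap V(F^{*})$ is very restricted), or, in the residual configurations, return to the displayed inequality: a ``poor'' $F^{*}$ forces $d(x_1)$ close to its maximum $|W|+2$ (at the extreme $x_1$ is adjacent to all of $W$), and then one splits off a triangle $x_1uu'$ on an edge $uu'$ of a component of $F$ distinct from $F^{*}$---condition (b) guarantees such a component with at least two vertices exists, since otherwise, in that extremal case, some vertex of $W$ would be adjacent to all of $x_1,x_2,x_3$---leaving a cycle through $x_2$ inside the still-intact $F^{*}$ as a disjoint second cycle. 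The main obstacle is exactly this last step: the case analysis showing that in every restricted configuration for $F^{*}$ either $F^{*}$ admits two vertex-disjoint closing paths or the degree inequality has been pushed far enough to free up a cycle elsewhere; everything before that is direct counting.
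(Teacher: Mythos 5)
Your counting is correct as far as it goes: $\delta(G)\ge 2$, $F=G[W]$ is a forest with $c$ components and at least one edge, condition (a) gives $\sum_{v\in W}d(v)\ge 3|W|+2$, and the identity $\sum_i d(x_i)=6+\sum_{v\in W}d(v)-2\|F\|$ together with $d(x_i)\le |W|+2$ does force, after relabelling, $d(x_2),d(x_3)\ge c+3$, hence two neighbours of each of $x_2,x_3$ inside a single tree component. The genuine gap is exactly where you flag it. When the relevant components coincide in one tree $F^{*}$, a cycle through $x_2$ (resp.\ $x_3$) consists of that vertex plus a path of $F^{*}$ joining two of its neighbours, and two such paths in a tree can be forced to intersect --- for instance if all the neighbours of $x_2$ and of $x_3$ in $F^{*}$ lie in distinct branches hanging off a single vertex $z_0$ of $F^{*}$, every closing path for either of them passes through $z_0$. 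Ruling such configurations out is the whole difficulty, and your fallback is not supported by anything you have proved: the displayed inequality is perfectly consistent with $c=1$, $F=F^{*}$, and $d(x_1)$ as small as $2$ (for $|W|\ge 6$ it gives no lower bound on $d(x_1)$ beyond $\delta(G)\ge2$), in which case there is no ``component of $F$ distinct from $F^{*}$'' on which to split off a triangle $x_1uu'$. So the deferred case analysis is not routine bookkeeping; as written the argument does not prove the lemma.

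It is worth seeing how the paper sidesteps this obstruction, because it uses a genuinely different closing device. After establishing from minimality that every $v\in Y=V(G)\setminus X$ with $\|v,Y\|\le 1$ has exactly one neighbour in $Y$ and two in $X$, the paper picks two vertices $z,z'\in H\setminus X$, deletes the interior of the $z,z'$-path and the edge $zz'$ if they lie in a common component of $G-X$, and takes maximal paths $P\ni z$ and $P'\ni z'$ in the resulting forest. These paths are vertex-disjoint \emph{by construction}, and each of their four endpoints sends two edges to $X$; since $|X|=3$, one path closes into a cycle through a common neighbour in $X$ of its two endpoints, and the other closes through either another common neighbour or the edge joining the remaining two vertices of $X$. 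In other words, the paper routes two already-disjoint paths of $G-X$ through the triangle, whereas you try to route two cycles through two fixed vertices of $X$ via paths inside the forest, which is precisely where disjointness can fail. To complete your approach you would need either the paper's maximal-path idea or a full structural analysis of the tree $F^{*}$; neither is present.
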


\begin{proof} Let $G$ be a minimal counterexample to the lemma. Then $G- X$ is acyclic. Let $Y=V(G)\setminus X$. 
By (a), there is $u \in H \setminus X$, and by (b)  $\|u,Y\| \geq 2$. This yields  $|G| \geq 6$. 
First, we show:
\begin{equation}\label{q1}
\mbox{
If $v\in Y$ and $\|v,Y\|\leq 1$, then $\|v,Y\|= 1$  and $\|v,X\|=2$.}
\end{equation}
Indeed, by (b), $\|v,X\|\leq 2$. So if $\|v,Y\|=0$ or
$\|v,X\|\leq 1$ and $\|v,Y\|\leq 1$, then $v\in L \setminus X$. Thus $G-v\subset G$  
satisfies (a) and (b). Then by the minimality of $G$, $G-v$ has two disjoint cycles, and hence so does $G$.

By (a), there are $z, z'\in H\setminus X$. 
If they are in the same component of $G-X$, then let $Q$ be the interior of the unique $z,z'$-path in $G-X$ and put $G'=G - X - Q - zz'$; 
otherwise put $G'=G-X$. Pick maximum paths $P=y_1\dots z\dots y_2$ and $P'=y'_1\dots z' \dots y'_2$ in $G'$.  
Perhaps $z=y_1$ or $z'=y_{1}'$, but $z,z'\in H$ implies $|P|, |P'| \geq 2$. 
 For $i \in \{1, 2\}$, if $y_{i} \neq z$ and $N(y_{i}) \cap Q \neq \emptyset$, then $G[P \cup Q]$ contains a cycle, a contradiction.  Then, 
\begin{equation}\label{q2}
\mbox{
either $d_{G-X}(y_{i}) = d_{G'}(y_{i})$ or $y_{i} = z$.}
\end{equation}
So, if $y_{i} \neq z$, then by~\eqref{q1}, $\|y_{i}, X\|  = 2$.  Otherwise $y_{i} = y_{1} = z$, and $\| z, X \| \geq d_{G}(z) - d_{G-X}(z) \geq d_{G}(z) - d_{G'}(z) - 1 \geq 2$.  So in any case, $\|y_{i}, X\| = 2$ and a similar argument shows $\|y_{i}', X\| = 2$.
Now $y_1$ and $y_2$ 
have a common neighbor, say $x_1$ in $X$, and $G[P+x_1]$ contains a cycle $C_1$. If $y'_1$ and $y'_2$ have a
common neighbor  $x_i\in X-x_1$, then $G[P'+x_i]$ contains a cycle disjoint from $C_1$. Otherwise, one of
$y'_1$ and $y'_2$ is adjacent to $x_2$ and the other to $x_3$. Then $G[P'+x_2+x_3]$ contains a cycle disjoint from $C_1$. 
\end{proof}

\section{Proof of Theorem~\ref{thm:2k+t}}\label{sec:2k+t}

Recall that we use convention~\eqref{pb1}.
Let $k$  be the smallest integer such that there exists a graph $G$ without $k$ disjoint cycles satisfying $|H | - |L | \geq 2k + t$ 
and $|G | \geq 3k$.  By Lemmas~\ref{lemma:trianglefree} and~\ref{lemma:2core}, $k \geq 3$.  Choose such $G$ to be minimal. 
 
 \begin{lemma}\label{lemma:3k} 
 $|G|\geq3k+1$.
 \end{lemma}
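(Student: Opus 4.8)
The goal is to rule out the exact-threshold case $|G| = 3k$ for our minimal counterexample $G$, thereby gaining an extra vertex to work with in the main proof. The natural strategy is to suppose $|G| = 3k$ and derive a contradiction by producing $k$ disjoint cycles directly, exploiting the fact that on $3k$ vertices any family of $k$ disjoint cycles must be a partition into triangles. First I would record the structural consequences of minimality from Property~\ref{prop:minimal}: $\delta(G) \ge 2$, and every edge has an endpoint of degree in $\{2k-1, 2k\}$. Since $|H| - |L| \ge 2k + t$ and $L$ could be large, I would first argue $L \ne \emptyset$ (otherwise $\delta(G) \ge 2k-1$ and Corollary~\ref{cor:K-K-Y} applies, giving $k$ disjoint cycles); so there is a vertex $x$ of degree $\le 2k-2$, and in fact by Property~\ref{prop:minimal}(1) of degree $\ge 2$.

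The core of the argument should be a degree/counting estimate combined with the triangle-packing bound. With $|G| = 3k$, a vertex $x \in L$ has $d(x) \le 2k-2$, so $|V(G) \setminus N[x]| \ge 3k - (2k-1) = k+1 \ge 4$. I would look at the graph $G' = G - x$ on $3k-1$ vertices, or a contraction $G \diagup xy$ for a well-chosen neighbor $y$ of $x$, aiming to apply the minimality of $k$ (note the choice of $k$ as \emph{smallest}, not just $G$ minimal): if I can exhibit a graph $G''$ with $|G''| \ge 3(k-1)$, $t_{G''} \le t$, and $|H_{k-1}(G'')| - |L_{k-1}(G'')| \ge 2(k-1) + t_{G''}$, then $G''$ has $k-1$ disjoint cycles, and I must splice in one more disjoint cycle using $x$ and the excised vertices. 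The delicate point is that shifting from $k$ to $k-1$ changes the thresholds defining $H$ and $L$ (degree $\ge 2k$ versus $\ge 2k-2$, etc.), so vertices can migrate between the sets; I would need to bound this migration carefully, using that $x$ has small degree and that removing $x$ or contracting an edge at $x$ changes other degrees by at most $1$ (and by $0$ if $x$'s neighborhood is independent). The triangle count can only decrease when passing to a subgraph, which helps on that side; but if we contract, a new triangle could appear, so a subgraph operation ($G - x$ or $G - X$ for a triangle $X$ through $x$) is safer for controlling $t$.

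Concretely I anticipate splitting into two cases according to whether the two "relevant" neighbors of $x$ are adjacent. If $x$ lies in a triangle $X \ni x$, set $G^* = G - X$ on $3(k-1)$ vertices; the $\ge 2k+t$ surplus, minus the loss of $x$ from $L$ (a gain) and the possible loss of two $H$-vertices (a loss of $2$), together with $t_{G^*} \le t - 1$ (since $X$ was one triangle of a maximum packing, or can be chosen so), should leave enough surplus to invoke the case $k-1$ and get $k-1$ disjoint cycles in $G^*$, with $X$ itself as the $k$-th. If $x$ has no neighbor-pair spanning an edge, I would contract an edge $xy$ with $d(y) \in \{2k-1,2k\}$, noting contraction creates no triangle here, keeps $3k-1 \ge 3(k-1)$ vertices, and adjusts the difference $|H|-|L|$ favorably because $x$ leaves $L$; then minimality of $k$ gives $k-1$ cycles in $G\diagup xy$, one of which may pass through $v_{xy}$ — if so, re-expand it to a path in $G$ and close it up through a new vertex, using the $\ge k+1$ vertices outside $N[x]$ and a degree bound from Property~\ref{prop:minimal} to find the needed adjacency, contradicting the choice of $G$.

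\textbf{Main obstacle.} The hard part will be the bookkeeping when descending from $k$ to $k-1$: controlling exactly how many vertices move into or out of $H_{k-1}$ and $L_{k-1}$ relative to $H_k$ and $L_k$ under vertex deletion or edge contraction, so that the inequality $|H_{k-1}(G'')| - |L_{k-1}(G'')| \ge 2(k-1) + t_{G''}$ is actually met, and simultaneously ensuring the extracted cycle can be completed disjointly from the $k-1$ cycles the inductive hypothesis supplies. I expect the argument to need the precise value $|G| = 3k$ (not just $\ge 3k$) in an essential way — it is what forces the $k-1$ cycles in $G''$ to occupy a prescribed set of vertices, leaving a controlled small set (containing $x$) on which to build the last cycle.
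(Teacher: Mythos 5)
Your plan takes a genuinely different route from the paper's proof, and it has gaps I do not see how to close. The central problem is the bookkeeping you yourself flag: when you delete a triangle $X$ through $x$ and drop from $k$ to $k'=k-1$, the thresholds defining $H$ and $L$ each move by $2$, while a vertex adjacent to all three vertices of $X$ loses $3$ from its degree. Thus every $v$ with $d_G(v)=2k$ and $\|v,X\|=3$ falls out of $H_{k'}(G-X)$, and every $v$ with $d_G(v)=2k-1$ and $\|v,X\|=3$ lands in $L_{k'}(G-X)$; there is no a priori bound on how many such vertices exist, so the surplus needed for $|H_{k'}(G-X)|-|L_{k'}(G-X)|\geq 2k'+t_{G-X}$ need not survive. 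Controlling exactly these vertices that see a whole triangle is what the auxiliary digraphs and the sets $\mathcal R$, $B$ in the rest of Section~\ref{sec:2k+t} are built for, so your route effectively requires the hard part of the main proof before its base case is in place. Two further steps are unsubstantiated: the claim $t_{G-X}\le t-1$ (a maximum triangle packing need not meet $X$, so deleting $X$ need not decrease $t$, and your inequality fails if $t_{G-X}=t$); and, in the contraction case, the induction at level $k-1$ returns only $k-1$ disjoint cycles, and the proposal to splice in a $k$-th cycle among the at most three leftover vertices is precisely the difficulty that cannot be waved away.

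The paper's actual proof avoids all of this with a short global argument: assuming $|G|=3k$, add edges making every vertex of $L$ dominating to obtain $G'\supseteq G$ with $\delta(G')\ge 2k-1$ and $|H_k(G')|\ge 2k$, and apply Corollary~\ref{cor:K-K-Y} to get $k$ disjoint cycles in $G'$, which, since $|G'|=3k$, form a partition of $V(G)$ into triangles. At most $\ell$ of these triangles use a new edge (each such triangle contains a vertex of $L$), so $t\ge k-\ell$; then $h\ge \ell+2k+t\ge 3k=|G|$ forces $H=V(G)$, and Theorem~\ref{thm:C-H} finishes. An augment-and-count argument of this type is what the statement calls for, rather than a $k\to k-1$ descent.
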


\begin{proof} 
Suppose that $|G| = 3k$. Create the graph $G' \supseteq G$ by adding to $G$ all edges necessary to make each vertex in $L$ a dominating vertex. 
Then $\delta(G') \geq 2k-1$, so by Corollary~\ref{cor:K-K-Y}, $G'$ contains $k$ disjoint cycles.  
As $|G'| = 3k$, these cycles are triangles, and at most $\ell$ of them contain edges from $E(G')\setminus E(G)$. 
Thus $t\geq k - \ell$  and so $h \geq \ell + 2k + t \geq 3k = |G|$. Hence $H=V(G)$ and by Theorem~\ref{thm:C-H}, $G$ contains $k$ disjoint cycles. 
\end{proof}

\begin{lemma}\label{lemma:Ltriangle}
Each $x\in L$  is in a triangle in $G$.
\end{lemma}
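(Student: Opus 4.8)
The plan is to argue by contradiction: suppose some $x \in L$ lies in no triangle of $G$, and derive a contradiction with the minimality of $G$. Since $x \in L$, we have $d(x) \le 2k-2$, and by Property~\ref{prop:minimal}(1), $d(x) \ge 2$. The natural move, mimicking the use of contraction in Lemmas~\ref{lemma:trianglefree} and~\ref{lemma:2core}, is to pick a neighbor $y$ of $x$ and contract the edge $xy$ to form $G' = G \diagup xy$. Because $x$ is in no triangle, $x$ and $y$ have no common neighbor, so $d_{G'}(v_{xy}) = d_G(y) + d_G(x) - 2$ and, more importantly, $d_{G'}(v) = d_G(v)$ for every $v \in V(G) \setminus \{x,y\}$; thus no vertex outside $\{x,y\}$ changes its membership in $H$ or $L$. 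Since $|G'| = |G| - 1 \ge 3k$ by Lemma~\ref{lemma:3k}, and $G'$ has a $k$-disjoint-cycle set if and only if... well, a set of $k$ disjoint cycles in $G'$ pulls back to one in $G$ (a cycle through $v_{xy}$ lifts to one through the path $y x$ or just through $y$), so $G'$ also has no $k$ disjoint cycles. The point is then to check that $G'$ still satisfies $|H_k(G')| - |L_k(G')| \ge 2k + t_{G'}$: we have $h' \ge h - 1$ (only $x$, $y$ could leave $H$, and $x \notin H$ since $x \in L$, and in fact $y$'s degree only goes up so $y$ cannot leave $H$; hence $h' \ge h$, with the only loss being that $v_{xy}$ replaces two vertices), while $\ell$ can only decrease since $x$ is removed. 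The subtle point is how $t_{G'}$ compares to $t = t_G$; since contraction can only create triangles (any triangle of $G$ survives, and new ones may appear through $v_{xy}$), we could have $t_{G'} > t$, which would break the degree-difference inequality.

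Handling the possible increase $t_{G'} > t$ is what I expect to be the main obstacle. The key observation to control it: a triangle of $G'$ not present in $G$ must use $v_{xy}$, and corresponds in $G$ to a path $y$–$x$ together with a vertex $w$ adjacent to both an endpoint, or more precisely to an edge $wy' $ with $y' \in \{x,y\}$... Since $x$ lies in no triangle, any new triangle $v_{xy} w w'$ of $G'$ corresponds to a path in $G$ of the form $w\, y\, x\, w'$ (a $P_4$) — equivalently, the new triangles are exactly the $4$-cycles through the edge $xy$. So $t_{G'} \le t_G + 1$: replacing $x$ by the contraction can merge into the triangle-packing at most one extra triangle, because a maximum triangle packing of $G'$ using $k'$ new triangles would need $k'$ disjoint paths $w_i y x w_i'$, all sharing the vertices $x$ and $y$, so $k' \le 1$. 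Hence $t_{G'} \le t + 1$, and the inequality for $G'$ becomes $h' - \ell' \ge h - \ell \ge 2k + t \ge 2k + t_{G'} - 1$, which is short by exactly one; the remaining idea is that $v_{xy} \in H_k(G')$ "for free" when $d_G(y)$ is large enough, or else to argue that $x$ could have been deleted outright.

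To close that gap of one, I would split on $d(x)$. If $d(x) = 2$, say $N(x) = \{y,z\}$: since $x$ is in no triangle, $yz \notin E(G)$, and contracting $xy$ gives $d_{G'}(v_{xy}) = d(y) + d(z) - 2$... hmm, no: $d_{G'}(v_{xy}) = d_G(y) + d_G(x) - 2 = d_G(y)$. By Property~\ref{prop:minimal}(2) applied to the edge $xy$, $d(y) \in \{2k-1, 2k\}$, so $d_{G'}(v_{xy}) \in \{2k-1, 2k\} \subseteq H_k(G')$. Thus $v_{xy} \in H_k(G')$ while the two vertices $x,y$ it replaces contributed net $|H \cap \{x,y\}| - |L \cap \{x,y\}| \le 1 - 1 = 0$ (as $x \in L$ and $y \in H$), i.e. contracting gains us a unit: $h' - \ell' \ge (h-\ell) + 1 \ge 2k + t + 1 \ge 2k + t_{G'}$, and $G'$ is a smaller counterexample — contradiction. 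If $d(x) \ge 3$, I would instead try to delete $x$: since $x \in L$ lies in no triangle, in $G - x$ every vertex keeps its degree except the $\le 2k-2$ neighbors of $x$, and... this is where a more careful argument is needed — either $G - x$ still satisfies the hypothesis (if removing $x$ doesn't move too many vertices out of $H$ into $L$, e.g.\ if $x$ has at most one neighbor of degree exactly $2k$ and none of degree $2k-1$) and we contradict minimality, or $x$ has several low-degree neighbors, in which case by Property~\ref{prop:minimal}(2) every such neighbor has degree in $\{2k-1,2k\}$ and we can instead bound $t$ from below by building a triangle on $x$ from two such neighbors — contradicting that $x$ is in no triangle. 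I expect the bookkeeping here (tracking exactly which vertices of $H$ drop to $L$, and the interplay with $t$) to be the delicate part, and the argument may need to be organized around whether $x$ has a neighbor of degree $2k-1$ versus all neighbors of degree $\ge 2k$.
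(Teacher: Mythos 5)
Your overall plan --- contract an edge $xy$ at a vertex $x\in L$ lying in no triangle, observe that no other vertex changes degree, and control the triangle number by $t_{G'}\le t+1$ --- is exactly the paper's argument, and those parts are correct. But there is a genuine gap in your accounting, and it forces you into a case split whose second branch you do not (and, as sketched, cannot) complete. You only claim $h'-\ell'\ge h-\ell$ and conclude you are ``short by exactly one.'' In fact the contraction buys the missing unit automatically: since $x\in L$ and no vertex of $G'$ has smaller degree than it had in $G$ (the new vertex $v_{xy}$ has degree $d(x)+d(y)-2\ge d(y)$), every vertex of $L_k(G')$ corresponds to a vertex of $L\setminus\{x\}$, so $|L_k(G')|\le \ell-1$, while $|H_k(G')|\ge h$. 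Hence
$|H_k(G')|-|L_k(G')|\ge h-\ell+1\ge 2k+t+1\ge 2k+t_{G'}$,
and minimality of $G$ (together with $|G'|\ge 3k$ from Lemma~\ref{lemma:3k}) finishes the proof with no case analysis on $d(x)$. This is precisely what the paper does.

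Because you missed that $\ell$ strictly drops, you try to recover the unit separately. In the case $d(x)=2$ your computation essentially works, though the assertion ``$d_{G'}(v_{xy})\in\{2k-1,2k\}\subseteq H_k(G')$'' is wrong as stated: a vertex of degree $2k-1$ is \emph{not} in $H_k$, so $v_{xy}$ need not join $H$; the gain really comes from $x$ leaving $L$, not from $v_{xy}$ entering $H$. More seriously, the case $d(x)\ge 3$ is left as an admittedly incomplete sketch, and the proposed fallback of deleting $x$ does not work: removing $x$ lowers the degree of each of its up to $2k-2$ neighbors by one, and each such neighbor can drop out of $H$ or fall into $L$, so $h-\ell$ can decrease by about $d(x)$, far more than the single unit gained by deleting $x$ from $L$. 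Your other fallback --- building a triangle through $x$ from two low-degree neighbors --- also fails, since two neighbors of $x$ having degree in $\{2k-1,2k\}$ says nothing about their adjacency. So the proof as written is not complete; the fix is simply the sharper bound $|L_k(G')|\le\ell-1$ above.
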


\begin{proof} Suppose $x$ is not in a triangle. By Property~\ref{prop:minimal}, $d(x) \geq 2$. Let $y \in N(x)$ and set $G' = G \diagup xy$.
Then $d_{G'}(v_{xy})=d(y)$ and
 $d_{G'} (z) = d_{G} (z)$ for all $z \in V(G')-v_{xy}$.
Since any triangle in $G'$ not containing $v_{xy}$ is also a triangle in $G$,  $t' := t_{G'} \leq t+1$. Thus
$H \subseteq H_{k}(G')$ and $L_{k}(G') +x \subseteq L$. So, 
\[
|H_{k}(G')|-|L_{k}(G')|\geq h-(\ell-1)\geq (\ell+2k+t)-\ell+1=1+2k+t\geq 2k+t'.
\]
By Lemma~\ref{lemma:3k}, $|G'|\geq3k$. As $G$ is minimal, $G'$ and $G$ have
$k$ disjoint cycles. 
\end{proof}

By Corollary~\ref{cor:K-K-Y}, $L\neq \emptyset$. Fix an $x \in L$.  Let $\T$ be a set of disjoint triangles in $G$ such that (a) $x \in V(\T)$, 
and (b) subject to (a),  $|\T|$ is maximum.  By Lemma~\ref{lemma:Ltriangle}, such a choice is possible. Let $T_{0} =T_0(\T)$ 
be the triangle in $\T$ containing $x$; say  $T_0=xyz$. 

Define an auxiliary digraph $\mathcal{D} = \mathcal{D}(\T)$ with $V(\mathcal{D}) = \T$ and $\overrightarrow{TU} \in E(\mathcal{D})$
 if and only if $T, U \in \T$ and  $\| v, U \| = 3$ for some $v \in T$. 
 If $v \in T$ and $\| v, U \| = 3$, we say the vertex $v$ \emph{witnesses} the edge $\overrightarrow{TU}$.
Let $\mathcal{R}=\mathcal R(\T) \subseteq \T$ be the set of triangles from which $T_{0}$ is reachable in  $\mathcal{D}(\T)$. Let $r = |\mathcal{R}|$. 
Since $T_{0} \in \mathcal{R}$,  $r \geq 1$.  Finally, define $B = B(\T) = \{ v \in V(G)\setminus V(\T) : \| v, T_{0} \| = 3 \}$.  
By the definitions of $\mathcal R$ and $B$, if $\| v, T_{0} \| = 3$ for a vertex $v$, then $v \in V(\mathcal{R})\cup B$.

\begin{lemma}\label{lemma:AdjacentR0}
If $|B| \leq 1$, then 
$\|v, T\| = 3$ for some vertex $v\notin V(\mathcal{R})\cup B$ and triangle $T \in \mathcal{R}$.
\end{lemma}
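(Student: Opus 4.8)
The plan is to argue by contradiction, using the maximality of the triangle collection $\T$ together with a counting/degree-sum argument on the "reachable" part of the graph. Suppose the conclusion fails: then no vertex outside $V(\mathcal{R})\cup B$ sends $3$ edges to any triangle of $\mathcal{R}$. Since also (by definition of $\mathcal{R}$ and of $B$) the only vertices sending $3$ edges to $T_0$ itself lie in $V(\mathcal{R})\cup B$, we want to show that the triangles of $\mathcal{R}$ together with the small set $B$ form a "self-contained" region that we can analyze directly. The key structural consequence I expect to extract is: for every triangle $U\in\mathcal{R}$ and every vertex $v\notin V(\mathcal{R})\cup B$ we have $\|v,U\|\le 2$; moreover no triangle outside $\mathcal{R}$ has an edge into $\mathcal{R}$ in $\mathcal{D}(\T)$ (that is exactly what it means for $\mathcal{R}$ to be the set of ancestors of $T_0$). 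So the only way "new" $3$-neighbors of triangles in $\mathcal{R}$ can arise is via $B$, which has size $\le 1$.

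First I would set up the vertex partition $V(G)=V(\mathcal{R})\cup B\cup W$ where $W$ is everything else (including $V(\T)\setminus V(\mathcal{R})$ and all non-triangle vertices outside $B$), and record the degree bounds: $x\in L$ lies in $T_0\subseteq V(\mathcal{R})$, and every vertex of $H$ contributes $\ge 2k$ to the degree sum. The plan is then to bound $\sum_{v\in V(\mathcal{R})} d_G(v)$ from below using $|H\cap V(\mathcal{R})|-|L\cap V(\mathcal{R})|$, and from above by splitting edges incident to $V(\mathcal{R})$ into (i) edges inside $V(\mathcal{R})$, (ii) edges to $B$ (few, since $|B|\le 1$), and (iii) edges to $W$; for type (iii), the failure hypothesis says each $w\in W$ sends at most $2$ edges to each of the $r$ triangles of $\mathcal{R}$, so $\|w,V(\mathcal{R})\|\le 2r$, but that alone is too weak — I will need the sharper fact that a vertex $w\in W$ with $\|w,V(\mathcal{R})\|$ large would let us enlarge or reroute $\T$. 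Specifically, if $w\notin V(\T)$ and $\|w,U\|=3$ for some $U\in\mathcal{R}$, then since $T_0$ is reachable from $U$ we could walk the reachability path, rotating triangles, and ultimately free up three vertices forming a triangle disjoint from a larger packing still containing $x$ — contradicting maximality of $|\T|$ (this is the standard "augmenting along the digraph $\mathcal{D}$" move). Hence in fact $\|w,U\|\le 2$ for all $U\in\mathcal{R}$, $w\notin V(\mathcal{R})\cup B$, which is what we assumed; the real content is to push this to a contradiction with the degree surplus inside $\mathcal{R}$.

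The decisive step is the counting: inside $V(\mathcal{R})$ (which has exactly $3r$ vertices) the edges contribute at most $3\binom{3}{2}r=9r$ — wait, better: $\|G[V(\mathcal{R})]\|\le\binom{3r}{2}$ is far too lossy, so instead I would bound the edges from $V(\mathcal{R})$ to the outside. Each triangle $U\in\mathcal{R}$ other than a "source" must have an out-edge in $\mathcal{D}(\T)$ realized by some $v\in U$ with $\|v,U'\|=3$ for $U'\in\mathcal{R}$; conversely for the degree sum we need upper bounds on $\|V(\mathcal{R}),V(G)\setminus V(\mathcal{R})\|$. A vertex outside contributing $\ge 3$ edges to $V(\mathcal{R})$ that is \emph{not} in $B$ and not in a triangle of $\mathcal{R}$: if it is a non-triangle vertex, the augmenting argument applies; if it lies in a triangle $U''\notin\mathcal{R}$, then $\|v,U\|=3$ would put $\overrightarrow{U''U}\in E(\mathcal{D})$ and hence $U''\in\mathcal{R}$, contradiction. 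So every outside vertex not in $B$ sends $\le 2$ edges to each triangle of $\mathcal R$. Then $\sum_{v\in V(\mathcal R)}d(v)\le 2\|G[V(\mathcal R)]\|+\|V(\mathcal R),B\|+\sum_{w\notin V(\mathcal R)\cup B}\|w,V(\mathcal R)\|$, and I bound the last sum by $2r\cdot(\text{number of such }w)$ — still not obviously enough, so the genuine obstacle is getting a clean contradiction, which likely forces me to also use that $x\in L$ forces $d(x)=2$ with both neighbours $y,z$ in $T_0$, that $y,z$ have degree in $\{2k-1,2k\}$ by Property~\ref{prop:minimal}, and that triangles in $\mathcal{R}\setminus\{T_0\}$ each have a witness vertex of degree exactly $2k$ — so the "surplus" $|H\cap V(\mathcal{R})|-|L\cap V(\mathcal{R})|$ is at most something like $2(r-1)+2-1=2r-1$, and comparing with the at-least $2k+t$ global surplus (most of which must then sit outside $V(\mathcal{R})$) gives many high-degree vertices in $W$ each forced to have $\ge 2r+1$ edges into $V(\mathcal{R})$ or into a triangle — and that is the contradiction.

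I expect the main obstacle to be exactly this bookkeeping: turning the qualitative "augment along $\mathcal{D}(\T)$" observation into a quantitative inequality tight enough to contradict $|H\cap V(\mathcal R)|-|L\cap V(\mathcal R)|$ being small. The augmenting move itself (rotate triangles along a directed path in $\mathcal{D}(\T)$ ending at $T_0$, using a vertex $v$ with $\|v,U\|=3$, $v\notin V(\mathcal R)\cup B$, to replace $U$ by the triangle on $v$'s three neighbours and cascade, keeping $x$ covered) is routine once set up, and it is what shows such a $v$ \emph{cannot} exist unless it is already in $V(\mathcal R)\cup B$ — but the lemma is asserting the opposite conclusion, so the logic must be: if no such $v$ exists, then $V(\mathcal R)\cup B$ is edge-isolated enough that the degree surplus inside it is bounded by $O(r)$, while the sharpness example / the remaining surplus has to be absorbed elsewhere, and tracing where those edges go produces either a $(k)$-packing of cycles directly or the desired witness $v$. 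I would therefore structure the write-up as: (1) fix notation and the partition; (2) prove the augmenting claim showing any external $3$-neighbour of $\mathcal R$ lies in $V(\mathcal R)\cup B$ — i.e. the hypothesis-free part; (3) assume for contradiction the lemma fails, so \emph{every} vertex of $V(\mathcal R)\cup B$ that is a $3$-neighbour of some $T\in\mathcal R$ in fact lies in $V(\mathcal R)$, strengthen to bound all crossing edges; (4) do the degree count on $V(\mathcal R)$ using Property~\ref{prop:minimal} and $x\in L$; (5) derive $|H|-|L|<2k+t$ or build $k$ disjoint cycles, a contradiction.
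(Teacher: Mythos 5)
There is a genuine gap: you never identify the mechanism that actually closes the argument, and you concede as much yourself (``still not obviously enough'', ``the genuine obstacle is getting a clean contradiction''). The direct degree count on $V(\mathcal{R})$ that you propose cannot work as stated: the hypothesis for contradiction only gives $\|w,V(\mathcal{R})\|\le 2r$ for each vertex $w$ outside $V(\mathcal R)\cup B$, so your bound on crossing edges is of order $2r|W|$, far too weak to contradict any surplus estimate, and Property~\ref{prop:minimal} does not cap $|H\cap V(\mathcal R)|-|L\cap V(\mathcal R)|$ in a way that rescues this. The idea the paper actually uses --- and which is absent from your proposal --- is induction via the minimal counterexample. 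Delete $V(\mathcal{R})$ and set $k'=k-r$. The contradiction hypothesis says every remaining vertex outside $B$ loses at most $2r$ in degree, so $H\setminus(V(\mathcal R)\cup B)\subseteq H_{k'}(G')$ and $L_{k'}(G')\subseteq(L\setminus V(\mathcal R))\cup B$; since $x\in L\cap T_0$ is removed and $|B|\le 1$, the inequality $|H_{k'}(G')|-|L_{k'}(G')|\ge 2k'+t_{G'}$ survives, so by minimality of $G$ the graph $G'$ has $k-r$ disjoint cycles, which together with the $r$ triangles of $\mathcal{R}$ give $k$ disjoint cycles in $G$ --- the desired contradiction. When $r=k-1$ this degenerates to $k'=1$, and one instead deletes only $\mathcal R - T_0$ and invokes the base-case Lemma~\ref{lemma:1tr} (after possibly removing one edge from the unique vertex of $B$ to $T_0$) to extract two disjoint cycles from what remains.

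A second, more local error: you assert that if $w\notin V(\T)$ and $\|w,U\|=3$ for some $U\in\mathcal R$, then rotating triangles along a $U\to T_0$ path in $\mathcal D(\T)$ ``frees up three vertices forming a triangle \dots contradicting maximality of $|\T|$.'' It does not. The rotation preserves $|\T|$ and merely relocates the spare vertex so that it becomes a $3$-neighbour of $T_0$, i.e.\ it enlarges $B$; that is precisely Lemma~\ref{lemma:maxB}, which is applied \emph{after} Lemma~\ref{lemma:AdjacentR0} (to conclude $|B|\ge 2$ in Lemma~\ref{lemma:K5}), not inside its proof. Conflating the augmentation with the present lemma collapses the division of labour between Lemmas~\ref{lemma:AdjacentR0}, \ref{lemma:maxB} and \ref{lemma:K5}, and leaves the present statement unproved.
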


\begin{proof} Suppose $|B| \leq 1$ and $\| v, T \| \leq 2$ for every $v \notin V(\mathcal{R}) \cup B$ and $T \in \mathcal{R}$.

\medskip{}
\noindent \textbf{Case 1:} \emph{$r \leq k-2$}. Let $G' = G - V(\mathcal{R})$ and observe $t_{G'} \leq t - r$. 
We will find $k' := k - r$ disjoint cycles in $G'$. For each $v \notin V(\mathcal{R}) \cup B$, $\| v, V(\mathcal{R}) \| \leq 2r$, so $d_{G'}(v) \geq d_{G}(v) - 2r$. Thus $H \setminus ( V(\mathcal{R}) \cup B) \subseteq H_{k'}(G')$ and $L_{k'}(G')\subseteq (L\setminus V(\mathcal{R})) \cup B$.  As $x\in L\cap V(\mathcal{R})$ and $|B| \leq 1$,
\begin{align*}
|H_{k'}(G')| & \geq h - (3r - 1) - |B|\geq h-3r\quad\mbox{and}\quad|L_{k'}(G')| \leq (\ell - 1) + |B|\leq \ell.
\end{align*}
Combining these inequalities  yields 
\[
|H_{k'}(G')|- |L_{k'}(G')| \geq (h - 3r)-\ell \geq  2 (k - r) + (t - r) \geq  2k' + t_{G'}.
\]
As $k'\geq2$ and $|G'|=|G|-3r \geq3k'$, $G'$ contains $k'$ disjoint cycles by the minimality of $G$, and thus $G$ has $k$ disjoint cycles.

\medskip{}
\noindent \textbf{Case 2:} $r=k-1$. Let $\mathcal{R}^{-}=\mathcal{R} - T_{0}$ and consider $G' = G - V(\mathcal{R}^{-})$.  For each $v \notin V(\mathcal{R}) \cup B$, since $\| v, V(\mathcal{R}^{-}) \| \leq 2 (r - 1)$, $d_{G'}(v) \geq d_{G}(v) - 2k + 4$.
This implies that $H \setminus (V(\mathcal{R}) \cup B) \subseteq H_{2}(G') - T_{0}$ and,
since each vertex in $B$ is adjacent to three vertices in $T_{0} \subseteq G'$,
$L_{2}(G') - T_{0} \subseteq L\setminus V(\mathcal{R})$. Therefore, since $x \in L \cap V(\mathcal{R})$ and $|B| \leq 1$,
\begin{align*}
|H_{2}(G') - T_{0}| & \geq h - (3r - 1) - |B| \geq h - 3k + 3\quad\mbox{and}\quad |L_{2}(G') - T_{0}| \leq \ell - 1.
\end{align*}
Since  $t = k - 1$, these inequalities give 
\[
|H_{2}(G') - T_{0}| -|L_{2}(G') - T_{0}|\geq (h - 3k + 3)- \ell+1\geq (\ell + 2k + (k - 1)) - 3k + 3-\ell+1 = 3.
\]
If $\| u, T_{0} \| = 3$, then $u$ is the unique vertex in $B$; in this case let $e$ be an edge from $u$ to $T_{0}$  and let $G''=G' - e$.  
Otherwise, let $G'' = G'$. Since in both cases, $d_{G''}(v) = d_{G'}(v)$ for $v \in V(G') - T_{0} - u$,  $|H_{2}(G'') - T_{0}| - |L_{2}(G'') - T_{0}| \geq 2$. 
By Lemma~\ref{lemma:1tr}, $G''$ contains two disjoint cycles, and so $G$ contains $k$ disjoint cycles, a contradiction.
\end{proof}

\begin{lemma}\label{lemma:maxB}
If $v \notin V(\mathcal{R}) \cup B$ and $\| v, T \| = 3$ for some $T \in \mathcal{R}$,
then there are a vertex $v' \in V(\T) $ and a set $\T'$ of disjoint triangles such that
$xyz \in \T'$, $| \T' | = | \T |$, $B(\T')=B+v'$, and $V(\T') = V(\T) + v - v'$. 
\end{lemma}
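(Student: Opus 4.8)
The plan is to trace the path in $\mathcal{D}(\mathcal{T})$ from the triangle $T$ containing $v$'s "target" up to $T_0$, and to "shift" along this directed path so that the vertex $v$ is absorbed into $V(\mathcal{T})$ while one vertex of $V(\mathcal{T})$ is ejected. More precisely, since $T \in \mathcal{R}$, by definition of $\mathcal{R}$ there is a directed path $T = T_m \to T_{m-1} \to \dots \to T_1 \to T_0$ in $\mathcal{D}(\mathcal{T})$. For each edge $\overrightarrow{T_{i+1} T_i}$, fix a witness vertex $w_i \in T_{i+1}$ with $\|w_i, T_i\| = 3$. The idea is to replace, for each $i$ from $m$ down to $0$, the triangle $T_i$ by a new triangle on an appropriately chosen triple of vertices, starting by putting $v$ together with two vertices of $T_m$ (possible since $\|v, T_m\| = 3$, i.e.\ $v$ is adjacent to all of $T_m$). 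This frees the witness vertex $w_{m-1}\in T_m$; but $w_{m-1}$ is adjacent to all of $T_{m-1}$, so $w_{m-1}$ together with two vertices of $T_{m-1}$ forms a triangle, which in turn frees $w_{m-2}$, and so on. At the last step we reach $T_0 = xyz$: the vertex $w_0 \in T_1$ arriving there is adjacent to all of $T_0$, so we can form a triangle using $w_0$ and two vertices of $\{x,y,z\}$, ejecting exactly one vertex $v' \in \{x,y,z\}$.

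The key bookkeeping points to verify are: (i) at each step we must choose \emph{which} two vertices of $T_i$ to keep, so that the incoming vertex ($v$, or $w_i$) plus those two form a triangle — this is automatic because the incoming vertex is adjacent to \emph{all three} vertices of $T_i$, so any two will do, and we choose the two that are \emph{not} the next witness $w_{i-1}$ (for $i \geq 1$), so that $w_{i-1}$ is released; (ii) the resulting collection $\mathcal{T}'$ is still a set of pairwise disjoint triangles — the underlying vertex sets of $T_0, \dots, T_m$ get permuted among themselves together with the single extra vertex $v$ and the single ejected vertex $v'$, and all other triangles of $\mathcal{T}$ are untouched; (iii) $|\mathcal{T}'| = |\mathcal{T}|$ and $xyz$ is replaced — wait: we need $xyz \in \mathcal{T}'$. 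Here I must be careful: the statement requires $xyz \in \mathcal{T}'$, so I should arrange the shift so that $T_0$ itself is \emph{not} altered; instead, at the final step the incoming vertex $w_0$ is adjacent to all of $T_0 = xyz$, and I should instead eject a vertex $v'$ from $T_1$'s \emph{replacement}... Let me reconsider: the correct arrangement is to shift so that the \emph{new} triangle replacing $T_1$ is $\{w_0\} \cup (\text{two vertices of } T_1)$, no — the cleanest version keeps $T_0=xyz$ fixed and performs the shift only along $T_1 \to \dots \to T_m$, with $w_0 \in T_1$ being the vertex that was supposed to "feed into" $T_0$. Since we do \emph{not} touch $T_0$, the vertex $w_0$ stays in the triangle replacing $T_1$, and the vertex genuinely ejected is one of the two non-witness vertices of $T_1$ — call it $v'$. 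Then $V(\mathcal{T}') = V(\mathcal{T}) + v - v'$ with $v' \in T_1 \subseteq V(\mathcal{T})$, $xyz \in \mathcal{T}'$, and $|\mathcal{T}'| = |\mathcal{T}|$.

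Finally I must check the claim $B(\mathcal{T}') = B(\mathcal{T}) + v'$. Recall $B(\mathcal{S}) = \{u \in V(G) \setminus V(\mathcal{S}) : \|u, T_0(\mathcal{S})\| = 3\}$ and $T_0(\mathcal{T}') = xyz = T_0(\mathcal{T})$ since $T_0$ is unchanged. The vertex set outside the triangles changed only by removing $v$ and adding $v'$. So $B(\mathcal{T}')$ and $B(\mathcal{T})$ differ only possibly at $v$ and $v'$: we have $v \notin B(\mathcal{T}')$ because $v \in V(\mathcal{T}')$; we have $v' \in B(\mathcal{T}')$ iff $\|v', xyz\| = 3$; and $v \notin B(\mathcal{T})$ by hypothesis ($v \notin B$), while we must confirm $v \in B(\mathcal{T}')$ is not relevant and instead that every vertex of $B(\mathcal{T})$ other than $v'$ survives. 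The substantive content is $\|v', xyz\| = 3$, i.e.\ the ejected vertex is adjacent to all of $T_0$. This should follow from choosing $v'$ carefully at the last step: $v'$ is one of the two vertices of $T_1$ kept out of the new triangle, and we are free to choose \emph{which} two of the three vertices of $T_1$ to retain with $w_0$; the edge $\overrightarrow{T_1 T_0}$ is witnessed by some vertex $w_0' \in T_1$ with $\|w_0', xyz\| = 3$ — but in fact I want the \emph{ejected} vertex to be adjacent to all of $xyz$, which forces the witness of $\overrightarrow{T_1 T_0}$ to play the role of $v'$, not of the retained vertex.

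The main obstacle, and the place requiring the most care, is precisely this last step: reconciling "$v'$ is the witness of $\overrightarrow{T_1 T_0}$ (so $\|v', T_0\| = 3$, giving $v' \in B(\mathcal{T}')$)" with "the vertex $w_0$ that needs a home in the replacement of $T_1$ is adjacent to all three vertices of $T_1$ and hence can join any two of them." If $v' = w_0'$ is the witness and we want $w_0$ (the incoming vertex from $T_2$) to form a triangle with the other two vertices of $T_1$, we need $w_0$ adjacent to both of $T_1 - v'$, which holds since $w_0$ witnesses $\overrightarrow{T_2 T_1}$ and so is adjacent to \emph{all} of $T_1$. So the shift is: for $2 \le i \le m$, choose at step $i$ to retain the two vertices of $T_i$ other than the witness $w_{i-1}$ of $\overrightarrow{T_i T_{i-1}}$; at step $1$, retain the two vertices of $T_1$ other than the witness $v' := w_0$ of $\overrightarrow{T_1 T_0}$. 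Then every released/incoming vertex is adjacent to all of the triangle it enters, every new triple is a triangle, the triangles are disjoint, $xyz$ is untouched, and $v'$ is adjacent to all of $xyz$, giving $v' \in B(\mathcal{T}')$; conversely no vertex of $B(\mathcal{T})$ except $v'$ leaves the outside-set (all other $V(\mathcal{T})$-vertices in $T_0,\dots,T_m$ remain in triangles, just permuted), and $v$ enters a triangle. Hence $B(\mathcal{T}') = B(\mathcal{T}) + v'$, completing the argument. (The special case $m = 1$, where $T = T_1$ and $v$ feeds directly into $T_1$, is handled the same way, with $v$ joining $T_1 - v'$ and $v'$ the witness of $\overrightarrow{T_1 T_0}$.)
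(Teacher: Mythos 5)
Your proof is correct and uses essentially the same argument as the paper: shift vertices along the directed path $T=T_j\to\cdots\to T_1\to T_0$ in $\mathcal{D}(\T)$, replacing each witness by the incoming vertex, leaving $T_0=xyz$ untouched, and ejecting the witness $v'$ of $\overrightarrow{T_1T_0}$, which satisfies $\|v',T_0\|=3$ and hence lands in $B(\T')$. The only difference is expository: the paper states the substitutions $T_j'=T_j-v_j+v$ and $T_i'=T_i-v_i+v_{i+1}$ directly, whereas you arrive at the same construction after some visible course-correction.
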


\begin{proof}Let $T=T_{j},T_{j-1},\ldots,T_{0}$ be a $T \to T_{0}$ path in $\mathcal{D}(\T)$
and, for each $i\in[j]$, let $v_{i}$ witness the edge $\overrightarrow{T_{i} T}_{i-1}$.
Define $T_{j}' = T_{j} - v_{j} + v$ and $T_{i}' = T_{i} - v_{i} + v_{i+1}$ for all $i \in [j-1]$.
Then, $\T' = \T - \{T_{1}, \ldots, T_{j} \} + \{ T_{1}', \ldots, T_{j}' \}$ is a set of $|\T|$
disjoint triangles in $G$, $v':=v_{1}\notin V(\T')\cup B$, and $\|v',T_{0}\|=3$.
Thus $B+v'=B(\T')$. \end{proof}

Now choose $\T$ subject to (a) and (b) so that $B$ is maximum.

\begin{lemma}\label{lemma:K5} $|B| = 2$.  Moreover, $\|v, T_{0} \cup B\| \leq 2$
for all $v \notin V(\T) \cup B$. \end{lemma}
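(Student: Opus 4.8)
\textbf{Proof proposal for Lemma~\ref{lemma:K5}.}

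The plan is first to establish $|B|=2$, and then to bootstrap this maximality of $|B|$ into the stronger local bound $\|v,T_0\cup B\|\le 2$. For the first part, I would argue by contradiction. If $|B|\le 1$, then Lemma~\ref{lemma:AdjacentR0} furnishes a vertex $v\notin V(\mathcal R)\cup B$ and a triangle $T\in\mathcal R$ with $\|v,T\|=3$; feeding this into Lemma~\ref{lemma:maxB} produces a new admissible family $\T'$ (still satisfying (a) and (b), with $T_0\in\T'$ and $|\T'|=|\T|$) whose $B$-set is $B+v'$, strictly larger than $B$. This contradicts the choice of $\T$ as maximizing $|B|$. Hence $|B|\ge 2$. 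For the upper bound $|B|\le 2$: suppose $|B|\ge 3$, so there are three distinct vertices $b_1,b_2,b_3\notin V(\T)$ each joined to all of $T_0=xyz$. Then $b_1b_2b_3$ together with $xyz$ would be... no — rather, I should exhibit $k$ disjoint cycles directly: the triangles in $\T-T_0$ give $k-2$ disjoint triangles avoiding $T_0$ and the $b_i$, and on $T_0\cup\{b_1,b_2,b_3\}$ (a graph containing $K_{3,3}$ plus the triangle $xyz$, hence $\supseteq K_5$ minus a perfect... ) we can find $2$ disjoint cycles, e.g. the triangle $xyz$ and a cycle through $b_1,b_2,b_3$ is not a cycle since the $b_i$ need not be mutually adjacent — instead take $x b_1 y b_2$ and... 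The cleaner route: $T_0\cup\{b_1,b_2\}$ already contains the two disjoint cycles $xb_1y$ and... these share $y$. Use instead $xb_1yb_2x$ (a $4$-cycle) is one cycle, but we want two disjoint cycles inside a $5$-vertex set $\{x,y,z,b_1,b_2\}$: take the triangle $xb_1z$ (wait, is $b_1z$ an edge? yes, $b_1\in B$) — no, $x,z$ both in $T_0$, so $xb_1z$ is a triangle, but then the remaining vertices are $y,b_2$, only two vertices, no cycle. So $|B|\ge 2$ is \emph{not} by itself enough; we genuinely need a third vertex. With $b_1,b_2,b_3$: the triangle $xyb_3$ plus the triangle... $z,b_1,b_2$ — is $b_1b_2$ an edge? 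Not necessarily. Then use the $4$-cycle $b_1zb_2y$? That reuses $y$. The correct decomposition on $\{x,y,z,b_1,b_2,b_3\}$: triangle $b_1 x b_2$ (edges $b_1x,xb_2$ present; $b_1b_2$ not needed if we instead form $b_1 x z b_2$... this is getting complicated, but on six vertices where one triangle $xyz$ is complete and each $b_i$ sees all of $\{x,y,z\}$, we certainly have two disjoint cycles: e.g. $b_1 x b_2 y b_1$... reuses nothing? vertices $b_1,x,b_2,y$ — that is a $4$-cycle, disjoint from the triangle $z b_3$? no. Take $4$-cycle $b_1 x b_2 y$ and triangle needs $z,b_3$ and a third vertex — unavailable. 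So we need: one cycle on $\le 3$ vertices, the other on the remaining $\ge 3$. Triangle $xyz$ and a cycle on $\{b_1,b_2,b_3\}$: not necessarily present. Triangle $b_1 x y$ and triangle $b_2 z b_3$: needs $b_2 b_3$? no, needs $b_2 z, z b_3, b_3 b_2$ — the last is not guaranteed. Triangle $b_1 x z$ and triangle $b_2 y b_3$: again needs $b_2 b_3$. So in fact $|B|=3$ alone does not obviously give two disjoint cycles; one likely needs to combine with reachability/contraction arguments or with the edge structure among the $b_i$ — and this is the main obstacle.

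Reassessing, I expect the actual argument for $|B|\le 2$ to use the \emph{same} minimality/reduction machinery rather than a naive cycle-packing: assume $|B|\ge 3$, delete $V(\mathcal R^-)$ (or all of $V(\mathcal R)$) as in Lemma~\ref{lemma:AdjacentR0}, and observe that the surviving triangle $T_0$ together with its $\ge 3$ neighbors in $B$ forces high degrees, so that the reduced graph $G''$ satisfies the hypotheses of Lemma~\ref{lemma:1tr} (or Corollary~\ref{cor:K-K-Y}) with room to spare; the extra vertex of $B$ contributes enough to $|H|-|L|$ to push past the threshold. Concretely, each $b_i$ has $\|b_i,T_0\|=3$, so after removing at most $2(k-2)$ edges at each vertex when contracting away $\mathcal R^-$, the three $b_i$ still lie in $H_2(G'')$, and $|H_2(G'')\setminus T_0|-|L_2(G'')\setminus T_0|\ge 3$, whence Lemma~\ref{lemma:1tr} applies (its hypothesis (b), $\|v,X\|\le 2$, is exactly what we must check for $X=T_0$, and any violator would itself lie in $B$, handled by deleting one edge as in Lemma~\ref{lemma:AdjacentR0}). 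This yields two disjoint cycles in $G''$ and hence $k$ disjoint cycles in $G$, a contradiction. So $|B|=2$.

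For the "moreover" clause, fix the two vertices of $B$, say $B=\{b_1,b_2\}$, and suppose some $v\notin V(\T)\cup B$ has $\|v,T_0\cup B\|\ge 3$. The point is that $v$ is then joined, within the $5$-set $T_0\cup B$, to at least three of $\{x,y,z,b_1,b_2\}$, and I want to use this to either (i) directly build a triangle $U$ on three of these vertices with $v\in U$ and reroute $\T$ to enlarge $B$, contradicting maximality of $|B|$ — e.g. if $v$ is adjacent to two vertices of $T_0$ and to some $b_i$, then since $b_i$ sees all of $T_0$ we get a triangle and can swap $v$ into the family while keeping $x$ covered, increasing $|B|$ or the count; or (ii) if $v$ is adjacent to all of $b_1,b_2$ and one vertex $w\in T_0$, then $\{v,b_1,b_2\}$-plus-structure gives an alternative triangle family with a larger $B$. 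In either case the contradiction is with the choice of $\T$ maximizing $|B|$. The main obstacle throughout is bookkeeping: one must verify in each rerouting that property (a) (the vertex $x$ stays in a triangle of the family) is preserved, that $|\T'|=|\T|$, and that the new $B(\T')$ genuinely contains more vertices — exactly the kind of careful swap argument already carried out in Lemma~\ref{lemma:maxB}, which I would invoke or mimic rather than redo from scratch.
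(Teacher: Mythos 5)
Your argument for $|B|\ge 2$ is exactly the paper's (apply Lemmas~\ref{lemma:AdjacentR0} and~\ref{lemma:maxB} to a family $\T$ chosen to maximize $|B|$), and your sketch of the ``moreover'' clause is also essentially right: since $v\notin V(\T)\cup B$ forces $\|v,T_0\|\le 2$ by the definition of $B$, and $|B|=2$, any $v$ with $\|v,T_0\cup B\|\ge 3$ has a neighbor $w\in T_0$ and a neighbor $u\in B$; then $vuw$ and $(T_0-w)\cup(B-u)$ are two disjoint triangles replacing $T_0$, giving $|\T|+1$ disjoint triangles containing $x$ and contradicting (b). Note the contradiction here is with the maximality of the triangle count, not with the maximality of $|B|$.

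The genuine gap is in the step $|B|\le 2$. You correctly discard the naive cycle-packing on $T_0\cup\{b_1,b_2,b_3\}$, but the reduction you substitute for it does not close. After deleting $V(\mathcal{R})$ or $V(\mathcal{R}^{-})$, a vertex $b_i\in B$ has \emph{uncontrolled} degree loss: nothing bounds $\|b_i,T\|$ by $2$ for $T\in\mathcal{R}^{-}$, and $\|b_i,T_0\|=3$ by definition. So each element of $B$ must be excluded from the guaranteed set $H_{k'}(G')$ and may enter $L_{k'}(G')$; in the Lemma~\ref{lemma:1tr} variant, each $b_i$ must additionally lose an edge to $T_0$ to satisfy hypothesis (b), after which it can even fall into $L_2(G'')$. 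Thus every additional vertex of $B$ \emph{weakens} the inequality $|H_{k'}(G')|-|L_{k'}(G')|\ge 2k'+t_{G'}$ by roughly $2$, rather than ``pushing past the threshold'' as you claim; with $|B|=3$ the hypothesis $h-\ell\ge 2k+t$ falls short by about $4$, which is precisely why Lemma~\ref{lemma:AdjacentR0} is stated only for $|B|\le 1$. The paper's actual argument is an exchange, not a reduction: replace $T_0=xyz$ by $T_0'=xyu_1$ with $u_1\in B$, observe that $z\in B(\T')$, and use Lemmas~\ref{lemma:AdjacentR0} and~\ref{lemma:maxB} to pass to a family $\T''$ with $T_0'\in\T''$, $|\T''|=|\T|$, and $B(\T'')=\{z,z'\}$. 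Since $V(\T'')\setminus V(\T)\subseteq\{u_1,v\}$, a third vertex $u_2\in B-u_1-v$ survives all the swaps, and then $xu_1z'$ and $yu_2z$ are disjoint triangles, so $\T''-T_0'+xu_1z'+yu_2z$ is a family of $|\T|+1$ disjoint triangles containing $x$, contradicting (b). You would need this exchange argument (or an equivalent one) to complete the proof.
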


\begin{proof}As $B$ is maximum, Lemmas~\ref{lemma:AdjacentR0} and~\ref{lemma:maxB} imply $|B| \geq 2$. Fix a vertex $u_{1} \in B$ and let $T_{0}'$ be the triangle $xyu_{1}$.  Observe $\T' = \T - T_{0} + T'_{0}$ is a set of $|\T|$ disjoint triangles in $G$.  Let $\mathcal{R}'=\mathcal{R(\mathcal T')}$, $r'=|\mathcal{R}'|$, $B'=B'(\T')$ and note $z \in B'$.  If $|B'| \geq 2$, let $\T'' = \T'$. Otherwise by Lemma~\ref{lemma:AdjacentR0}, there are $v \notin V(\mathcal{R}') \cup B'$ and $T \in \mathcal{R}'$ with $\|v, T \| = 3$.  By Lemma~\ref{lemma:maxB}, there are $z' \in V(\T')$ and a set $\T''$ of triangles  satisfying $T_{0}' \in \T''$, $|\T''| = | \T' |$, and $B(\T'') = \{z, z' \}$.  

If $|B|\ge 3$ then pick $u_{2} \in B - u_{1} - v$. As $V(\T'') \setminus V(\T ) \subseteq \{u_{1}, v \}$,  $u_{2} \notin V(\T'')$.  Thus $\T'' - T'_{0} + xu_{1}z' + yu_{2}z$ is a set of $|\T|+1$ disjoint triangles containing $x$, contradicting (b). So $|B| = 2$. 

Lastly, if $v \notin V(\T) \cup B$ and $\| v, T_{0} \cup B \| \geq 3$, then $v$ has neighbors $w\in T_{0}$ and $u\in B$.
  Thus $vuw$ and $(T_0-w)  \cup (B-u)$ are disjoint triangles in $(T_{0} \cup B) + v$, contradicting (b).
\end{proof}

Let $T^* := G[T_{0} \cup B]$.  Define a second auxiliary digraph $\mathcal{D}^* (\T)$ to have vertex set $\T - T_{0} + T^*$ 
and $\overrightarrow{TU} \in E(\mathcal{D}^* (\T))$ if and only if $\| v, U \| \geq 3$ for some $v \in T$.  
Again, we say the vertex $v$ \emph{witnesses} the edge $\overrightarrow{TU}$.   Define the set of graphs $\mathcal{R}^*$ to be $T^*$ 
together with the set of triangles from which 
$T^*$ is reachable in $\mathcal{D}^*(\T)$. Let $r^* = |\mathcal{R}^*|$.

\begin{lemma}\label{lemma:Rbar}
If $v \in V(G)\setminus V(\mathcal{R}^*)$, then $ \| v, T \| \leq 2$ for each $T \in \mathcal{R}^*$.
\end{lemma}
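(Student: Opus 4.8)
The plan is to argue exactly as in the proof of Lemma~\ref{lemma:AdjacentR0}, but now with the modified triangle $T^*$ playing the role of $T_0$ and the digraph $\mathcal D^*(\T)$ replacing $\mathcal D(\T)$. Suppose for contradiction that there is a vertex $v \in V(G) \setminus V(\mathcal R^*)$ and a triangle $T \in \mathcal R^*$ with $\|v,T\| = 3$. Since $v$ witnesses the edge $\overrightarrow{T^{\,\prime}T}$ for no superfluous reason, the point is that $v \notin V(\mathcal R^*)$ would force $v$ into $\mathcal R^*$: indeed, $T \in \mathcal R^*$ means $T^*$ is reachable from $T$ in $\mathcal D^*(\T)$, and if $v$ lay in some triangle $U \in \T \setminus \mathcal R^*$ then $\overrightarrow{U T}$ would be an edge of $\mathcal D^*(\T)$, making $T^*$ reachable from $U$, i.e.\ $U \in \mathcal R^*$, a contradiction. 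Hence $v \notin V(\T)$ altogether, and moreover $v \notin B$: if $v \in B$ then $v \in V(T^*) \subseteq V(\mathcal R^*)$ (once $|B|=2$ by Lemma~\ref{lemma:K5}, $B \subseteq V(T^*)$). So $v \notin V(\T) \cup B$.

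Now I would split into cases on $r^*$, mirroring Lemma~\ref{lemma:AdjacentR0}. If $r^* \le k-2$, set $G' = G - V(\mathcal R^*)$; note $|V(\mathcal R^*)| = 3r^* + 1$ since $T^*$ has four vertices and the remaining $r^*-1$ members of $\mathcal R^*$ are triangles, and $t_{G'} \le t - r^*$ because deleting $\mathcal R^*$ destroys at least $r^*$ disjoint triangles (one per member). For each $w \notin V(\mathcal R^*)$ we have, by the contrapositive of the lemma's own hypothesis applied to triangles in $\mathcal R^* \setminus T^*$ and by Lemma~\ref{lemma:K5} for $T^*$, that $\|w, V(\mathcal R^*)\| \le 2(r^*-1) + 2 = 2r^*$, so $d_{G'}(w) \ge d_G(w) - 2r^*$; hence $H \setminus V(\mathcal R^*) \subseteq H_{k'}(G')$ and $L_{k'}(G') \subseteq L \setminus V(\mathcal R^*)$ with $k' := k - r^*$. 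Since $x \in L \cap V(T^*) \subseteq L \cap V(\mathcal R^*)$, we get $|H_{k'}(G')| \ge h - 3r^*$ (we lose the four vertices of $T^*$ but one of them, $x$, is not in $H$, and the other $r^*-1$ triangles cost $3$ each, for a total loss of at most $3 + 3(r^*-1) = 3r^*$) and $|L_{k'}(G')| \le \ell - 1$. Then
\[
|H_{k'}(G')| - |L_{k'}(G')| \ge (h - 3r^*) - (\ell - 1) \ge 2(k - r^*) + (t - r^*) + 1 \ge 2k' + t_{G'},
\]
and since $k' \ge 2$ and $|G'| = |G| - 3r^* - 1 \ge 3k - 3r^* - 1 \ge 3k' $ (using Lemma~\ref{lemma:3k}, $|G| \ge 3k+1$), minimality of $G$ gives $k'$ disjoint cycles in $G'$, hence $k$ disjoint cycles in $G$ after adding the $r^*$ cycles supported on $V(\mathcal R^*)$ — a contradiction.

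For the case $r^* = k - 1$, I would remove $\mathcal R^* - T^*$ (the $k-2$ triangles other than $T^*$), keeping $T^*$, so that $G' = G - V(\mathcal R^* - T^*)$ still contains the triangle $T_0 = xyz$. As in Case~2 of Lemma~\ref{lemma:AdjacentR0}, one checks $|H_2(G') - T_0| - |L_2(G') - T_0| \ge 2$ after possibly deleting one edge from any stray vertex adjacent to all of $T_0$; here $t = k-1$ is forced because $\T$ has $t$ triangles and no larger triangle set contains $x$. Condition (b) of Lemma~\ref{lemma:1tr} — that $\|w, T_0\| \le 2$ for all $w \notin T_0$ in $G'$ — follows from Lemma~\ref{lemma:K5} together with the fact that the only potential violators are vertices of $B$, which we handle by the single edge deletion. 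Applying Lemma~\ref{lemma:1tr} to $G'$ with the triangle $T_0$ yields two disjoint cycles in $G'$, hence $2 + (k-2) = k$ disjoint cycles in $G$, again a contradiction.

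The main obstacle I anticipate is bookkeeping the vertex counts and degree losses precisely when $T^*$ has four vertices rather than three: one must be careful that $|V(\mathcal R^*)| = 3r^* + 1$, that the $+1$ vertex is exactly compensated because $x \notin H$, and that $\|w, V(T^*)\| \le 2$ for $w \notin V(\T) \cup B$ is genuinely the content of the "moreover" clause of Lemma~\ref{lemma:K5} (not just the triangle-freeness of independent-set arguments). Everything else is a faithful transcription of the $\mathcal R$, $\mathcal D$ argument, so once the arithmetic is set up with the $3r^*+1$ bound the two cases close as before.
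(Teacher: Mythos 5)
Your first paragraph is fine: if $v$ lay in a triangle of $\T\setminus\mathcal R^*$ then that triangle would itself belong to $\mathcal R^*$, so indeed $v\notin V(\T)\cup B$. But from that point on the argument is circular. To run the deletion argument of Lemma~\ref{lemma:AdjacentR0} you need the bound $\|w,V(\mathcal R^*)\|\le 2r^*$ for \emph{every} $w\notin V(\mathcal R^*)$, and you obtain it ``by the contrapositive of the lemma's own hypothesis applied to triangles in $\mathcal R^*\setminus T^*$'' --- but the bound $\|w,T\|\le 2$ for the triangles $T\in\mathcal R^*\setminus\{T^*\}$ is precisely the conclusion of Lemma~\ref{lemma:Rbar}, the statement being proved; Lemma~\ref{lemma:K5} covers only $T^*$ itself, and no earlier lemma covers the rest of $\mathcal R^*$. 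Note also that the logical polarity is reversed relative to Lemma~\ref{lemma:AdjacentR0}: there the deletion argument exploits the assumed \emph{absence} of any vertex with three neighbors in a member of $\mathcal R$, whereas here you are assuming the \emph{presence} of such a $v$ --- and indeed $v$ never appears again in either of your cases, so whatever contradiction you derive is unrelated to the hypothesis you negated. (Smaller slips: $T^*=G[T_0\cup B]$ has five vertices, not four, so $|V(\mathcal R^*)|=3r^*+2$.)

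The paper's proof is instead a local exchange argument in the spirit of Lemma~\ref{lemma:maxB}, with no counting at all. Take a $T\to T^*$ path $T=T_j,\dots,T_1,T^*$ in $\mathcal D^*(\T)$ (by Lemma~\ref{lemma:K5}, $\|v,T^*\|\le2$, so $j\ge1$), let $v_i$ witness each edge, and rotate: $T_j'=T_j-v_j+v$ and $T_i'=T_i-v_i+v_{i+1}$. This frees the witness $v_1$ of $\overrightarrow{T_1T^*}$, which has at least three neighbors among the five vertices of $T^*$. If all three lie in $T_0$, the rotated family $\T'$ satisfies $B(\T')\supseteq B+v_1$, contradicting the maximality of $B$; otherwise $v_1$ has neighbors $w\in T_0$ and $u\in B$, and $v_1wu$ together with $(T_0-w)\cup(B-u)$ and the rotated triangles form $|\T|+1$ disjoint triangles through $x$, contradicting the maximality of $\T$. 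This rotation step is the idea your proposal is missing; it is what establishes the degree bound that the subsequent deletion argument (which appears in the proof of the main theorem, not of this lemma) then consumes.
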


\begin{proof} Suppose $v \in V(G)\setminus V(\mathcal{R}^*)$, $T \in \mathcal{R}^*$, and $ \| v, T \| \geq 3$.
Let $T = T_{j}, T_{j-1}, \ldots, T_{1}, T^*$ be a $T \to T^*$ path in $\mathcal{D}^*(\T)$.  By Lemma~\ref{lemma:K5}, $v$ is adjacent to at most $2$ vertices in $T^*$, so $j \geq 1$.

Let $v_{1}$ witness the edge $\overrightarrow{T_{1} T}^*$ and, for $i \in \{2, \ldots, j \}$, let $v_{i}$ witness the edge $\overrightarrow{T_{i} T}_{i-1}$.  As in the proof of Lemma~\ref{lemma:maxB}, define $T_{j}' = T_{j} - v_{j} + v$ and $T_{i}' = T_{i} - v_{i} + v_{i+1}$ for all $i \in [j-1]$.  If $\|v_{1}, T_{0} \| = 3$, then $\T' = \T - \{T_{1}, \ldots, T_{j} \} + \{ T_{1}', \ldots, T_{j}' \}$ is a set of $|\T|$ triangles in $G$, but $B+v_{1}=B(\T')$, contradicting the maximality of $B$.  Otherwise, there exist a vertex 
$w \in N(v_{1}) \cap T_{0}$, a vertex $u \in N(v_{1}) \cap B$, and a triangle $T_{0}' =(T_{0} - w) \cup (B - u)$.  
Then $\T' = \T - \{T_{0}, T_{1}, \ldots, T_{j} \} + \{ v_{1}wu, T_{0} ', \ldots, T_{j}' \}$ is a set of $|\T| + 1$ disjoint triangles in $G$, contradicting the maximality of $\T$.
\end{proof}

\begin{proof}[Proof of Theorem~\ref{thm:2k+t}]

Let $G'=G-V( \mathcal{R}^*)$. Set $k'=k - r^*$ and $t' = t - r^*$. Then $k' \geq 1$.
By Lemma~\ref{lemma:K5}, $B$ has the form $\{w_{1},w_{2}\}$, and by Lemma~\ref{lemma:Rbar}, every
$v \in V(G')$ satisfies 
\begin{equation}
d_{G'}(v)\geq d_{G}(v)-2r^*.\label{deg-eq}
\end{equation}
Thus $H \setminus V(\mathcal{R}^*)\subseteq H_{k'}(G')$ and $L_{k'}(G')\subseteq(L\setminus V(\mathcal{R}^*))$.
As $x\in L \cap T_{0}$, this implies
\begin{align*}
|H\cap H_{k'}(G')| & \geq h- (3r^* - 1)-|B| \geq h - 3 r^* - 1\quad\mbox{and}\quad |L_{k'}(G')|\leq  |L\cap V(G')|\leq\ell - 1.
\end{align*}
Combining these inequalities yields 
\begin{equation}
|H_{k'}(G')| - |L_{k'}(G')| \geq (h - 3 r^* - 1) - (\ell - 1) \geq 2 (k - r^*) + (t - r^*) = 2k'+t'.\label{hyp-eq}
\end{equation}
and 
\begin{equation}
|H\cap H_{k'}(G')| - |L\cap V(G')| \geq (h - 3 r^* - 1) - (\ell - 1) \geq 2 (k - r^*) + (t - r^*) = 2k'+t'.\label{hyp-eq'}
\end{equation}

\textbf{Case 1:} $|G'|=3k'-1$. As $H_{k}'(G')\ne\emptyset$, $\Delta(G')\geq2k'$ and $2k'+1\leq|G'|=3k'-1$. So $k'\geq2$. Let $G^{+}=G'\vee K_{1}$, where $V(K_{1})=\{u\}$.
Then $|G^{+}|=3k'$ and $t_{G^{+}}\leq t'+1$. So 
\[
|H_{k'}(G^{+})| - |L_{k'}(G^{+})| \geq |H_{k'}(G')+u| - |L_{k'}(G^{'})| \geq 2k' + t' + 1 \geq 2k' + t_{G^{+}}.
\]
As $|G|$ is minimal, $G^{+}$ has a set $\mathcal{S'}$ of $k'$ disjoint triangles. Since $|G^{+}|=3k'$, we may assume
 $T'=uu_{1}u_{1}'\in\mathcal{S}'$. Let $T^{''}=xyw_{1}$
and $\mathcal{S}=(\mathcal{S}'-T')\cup(\mathcal{R}-T_0+T^{''})$. Thus $t=k-1$, $h \geq 2k + t + \ell=3k$
and $\ell = 1$. So $H=V(G)-x$. Let  $u_{2}u_{2}':=zw_{2}$,
$U=\{u_{1},u_{1}',u_{2},u_{2}'\}$, and note that $u_{1}u_{1}',u_{2}u_{2}'\in E(G-V(\mathcal{S})$). 

Since $U\subseteq H$, and $G[U]$ is acyclic, $\|U,V(G)\setminus U\|\ge8k-6>8(k-1)$. Thus
 $\|U,T\|\geq9$ for some $T=q_1q_2q_3\in\mathcal{S}$. 
 Say $\|q_1,U\| \le \|q_2,U\| \le \|q_3,U\|$.
 Then  $q_2u_{i}u_{i}'$ is a triangle for some $i\in [2]$. Now $\|\{q_1,q_3\},\{u_{3-i},u_{3-i}'\}\|\ge2$, so $\{q_1,q_3,u_{3-i},u_{3-i}'\}$ contains a cycle. Thus $G$ has $k$ disjoint cycles, a contradiction.

\textbf{Case 2:} $|G'|\geq3k'$. If $k'\geq2$ then \eqref{deg-eq}, \eqref{hyp-eq},
and the minimality of $G$ imply $G'$ contains $k'$ cycles and so $G$ contains $k$
cycles. So assume $k'=1$ and $G'$ is acyclic. 

By \eqref{hyp-eq'}, $|H\cap H_{k'}(G')| - |L\cap V(G')| \geq 2$. Thus, there is a component 
 $G_{0}$ of $G'$ with 
 \begin{equation}\label{Neq}
|H\cap H_{k'}(G_0)| - |L\cap V(G_0)| \geq 1.
\end{equation}
 By~\eqref{deg-eq}, $|G_0|\geq 3$.
 Let  $W_0=V(G_0)$ and $G'_0=G[T^*\cup W_0]$. By Lemma~\ref{lemma:K5} and the fact that $G_0$ has no isolated vertices,
 \[
 d_{G'_0}(v)\geq\left\{\begin{array}{ll}4,& \mbox{ if } v\in H\cap W_0;\\
 1,& \mbox{ if } v\in L\cap W_0;\\
3 ,& \mbox{ if } v\in  W_0-L-H.
 \end{array}\right.\]
 By this and~\eqref{Neq},
 \begin{align*}
 \|W_0,T^*\| = \sum_{v\in W_0}d_{G'_0}(v)-2\|G_0\| &\geq 2.5|W_0|+1.5(|H\cap W_0|-|L\cap W_0|)-2(|W_0|-1) \\
&\geq 0.5|W_0|+1.5+2\geq 5.
\end{align*}
It follows that there are $w\in T_{0}$ and $u\in B$ such that $\|\{w,u\},W_0\|\geq 2$. Then
$G[W_0+w+u]$ contains a cycle, and $(T_0-w)  \cup (B-u)$ induces a triangle. 
This gives $k$ disjoint cycles. \end{proof}

\section{Proof of Corollary~\ref{cor:2k+t+1}}\label{sec:2k+t+1}

Suppose an integer  $k\ge2$ and a graph $G$ satisfy $h - \ell \geq 2k + t + 1$, and $G$ has no $k$ disjoint cycles.  
By Lemmas~\ref{lemma:trianglefree} and ~\ref{lemma:2core}, $k \geq 3$. Let $|G|=3k'+r$, where $k' = \left\lfloor k / 3 \right\rfloor$ and $0 \le r \le2$. 
 By Theorem~\ref{thm:2k+t}, $3k-1\ge|G|\ge h\geq 2k+1\ge7$, so $ k-1\ge k'\ge2$. 
  Pick $R\subset V(G)$ so that  $G':=G-R$ has $t$ disjoint triangles.  Let $r=|R|$. Then $t_{G'}=t$, and  $d_{G'}(v)\ge d_G(v)-2$ for each $v \in V(G')$. Thus
\[ |H_{k'}(G' )| - |L_{k'}(G ')| \ge |H \setminus R| - \ell \ge 2k+t+1-r\ge2k'+t_{G'}+1.\]
By Theorem~\ref{thm:2k+t},  $G'$  has $k'$ disjoint triangles, so $t_{G'}= k'$ and   $|H_{k'}(G')|\ge3k'+1>|G'|$, a contradiction.

\section{Proof of Theorem~\ref{thm:planar}}\label{sec:planar}

The proof will be by contradiction.  Consider the smallest $k$ such that there exists a counterexample 
$G$, and choose such  $G$ to be minimal. 
 If $k = 2$, then $h \geq 4$, so $G=K_5$ or $|G| \geq 6$.
 As $G$ is planar, $G$ contains neither $K_5$ nor $SK_5$. Thus by Lemma~\ref{lemma:2core}, $G$ has two disjoint cycles. Hence $k \geq 3$.

We first show that $L \neq \emptyset$.  Since $G$ is planar, $\| G \| \leq 3 |G| - 6$ and the average degree is less than $6$.  
If $k \geq 4$, then $L \neq \emptyset$ follows immediately.  If $k = 3$ and $\delta(G) = 5$, then since $h \geq 2k=6$,  
$\| G \| \geq \frac{1}{2} ( 36 + 5 ( |G| - 6))$.  This implies $|G| \geq 18=6k$, and by Corollary~\ref{cor:K-K-Y}, $L\neq \emptyset$. 

Let $x \in L$. We claim that
\begin{equation}\label{0129}
 \mbox{for every $y \in N(x)$, the edge $xy$ is contained in a triangle. }
\end{equation}
Indeed, if $xy$ is not  in a triangle, then consider the graph $G^*=G \diagup xy$.
The degree of every vertex other than $x$ or $y$ remains unchanged and the degree 
of $v_{xy}$ is at least the degree of $y$.  Therefore, $|H_{k}(G^*)| 
\geq |L_{k}(G^*)| + 2k$ and by the minimality of $G$, $G^*$ contains  $k$ disjoint cycles. 
 Expanding the edge $xy$ yields $k$-disjoint cycles in $G$. This proves~\eqref{0129}.

Fix a plane drawing of $G$.  Every triangle $T$  separates the plane into  the exterior region $R_{1}(T)$ and interior region $R_{2}(T)$.
Among all triangles containing $x$, choose $T'$  so that  $R_2(T')$ contains the fewest vertices.  Let $T' = xyz$, $R_1=R_1(T')$ and $R_2=R_2(T')$.
By~\eqref{0129}, $R_2$ contains no neighbors of $x$.

Suppose $G$ has two vertices $v_{1}$ and $v_{2}$ adjacent to all three vertices of $T'$. By the choice of $T'$ and $R_2$, both $v_{1}$ and $v_{2}$ are in $R_1$.
The planar drawing induced by $T' \cup \{v_{1}, v_{2} \}$ contains no edges in the interior of $R_{2}$.  Adding a vertex $v$ in $R_{2}$ adjacent to all three vertices of $T'$ gives a planar embedding of $K_{3,3}$, a contradiction.  So  $G$ has at most one vertex $v_{1}$ adjacent to all $3$ vertices of $T'$.

Let $G' = G - T'$, $k' = k - 1$.  Then for each $u \in V(G) -v_{1} $, $d_{G'} (u) \geq d_{G} (u) - 2$ and $d_{G'}(v_{1}) = d_{G}(v_{1}) - 3$. 
It follows that $|H\cap \{v_{1}\}|+|L_{k'}(G') \cap \{v_{1}\}| \leq 1$. Hence
\[ |H_{k'}(G')|-|L_{k'}(G')| \geq (h - 2 - |H \cap \{v_{1}\}|)-(\ell - 1+ |L_{k'}(G') \cap \{v_{1}\}|)\geq 2k - 2= 2k'.\]
By the minimality of $G$, $G'$ contains $k-1$ disjoint cycles, and so $G$ contains $k$ disjoint cycles.

\section{Proof of Theorem~\ref{thm:1tri}}\label{sec:1tri}

Following Dirac and Erd\H{o}s \cite{D-E}, let $V_{\geq s}(G)$ (respectively, $V_{\leq s}(G)$) denote the
set of vertices of $G$ of degree at least $s$ (respectively, at most $s$). In these terms, $H=H_k(G)=V_{\geq 2k}(G)$ and $L=L_k(G)=V_{\leq 2k-2}(G)$.
The following lemma  may be of interest on its own. 

\begin{lemma}\label{lemma:2k-1}
Let $G$ be a triangle-free graph with $V(G) \neq \emptyset$.  
 If 
\begin{equation}\label{2k+1} 
 |V_{\geq 2k+1} (G)| - |V_{\leq 2k-1} (G)| \geq 2k - 2,
\end{equation} 
  then $G$ has $k$ disjoint cycles.
\end{lemma}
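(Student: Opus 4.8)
The plan is to proceed by minimal counterexample and mimic the structure of the base-case lemmas of Section~\ref{sec:basecase}, but now tracking the shifted thresholds $V_{\ge 2k+1}$ and $V_{\le 2k-1}$. First I would let $G$ be a graph that is triangle-free, satisfies~\eqref{2k+1}, has no $k$ disjoint cycles, and is minimal: $|G|$ minimal, then $\|G\|$ minimal. A triangle-free analogue of Property~\ref{prop:minimal} gives $\delta(G)\ge2$, and for every edge $uv$ at least one endpoint has degree in $\{2k-1,2k\}$ (otherwise deleting $uv$ keeps both sets unchanged and keeps $G$ triangle-free, since removing an edge cannot create a triangle). In particular, since~\eqref{2k+1} forces $|V_{\ge 2k+1}(G)|\ge 2k-2\ge 4$ for $k\ge3$, $G$ has at least $2k-2$ vertices of degree $\ge 2k+1$, and hence $|G|\ge 3k$ once one accounts for the vertices of small degree balancing those of large degree; I would verify $|G|\ge 3k$ carefully, perhaps by the edge-count/handshake argument, so that Corollary~\ref{cor:K-K-Y} and Theorem~\ref{thm:K-K-Y} become available.

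Next I would reduce to the case $\delta(G)=2$. If $\delta(G)\ge 2k-1$ then, since triangle-freeness rules out both sporadic cases of Theorem~\ref{thm:K-K-Y} ($2K_k\vee\overline{K_k}$ contains triangles for $k\ge3$, and a wheel contains triangles), we must have $\alpha(G)\ge |G|-2k+1$; but an independent set $U$ of that size has all degrees $\le |G|-|U|=2k-1$, which caps $|V_{\ge 2k+1}(G)|\le 2k-1$ and, combined with $|U|$ vertices contributing to $V_{\le 2k-1}$, contradicts~\eqref{2k+1}. So there is a vertex $x$ with $d(x)=2$; say $N(x)=\{y,z\}$. Since $G$ is triangle-free, $yz\notin E(G)$, so contracting gives $G':=G\diagup xy$, which is again triangle-free, satisfies $d_{G'}(v)=d_G(v)$ for $v\neq v_{xy}$ and $d_{G'}(v_{xy})\ge d_G(y)$; hence $|V_{\ge 2k+1}(G')|\ge |V_{\ge 2k+1}(G)|-1$ while $|V_{\le 2k-1}(G')|\le |V_{\le 2k-1}(G)|-1$ because $x$ left the small set, so~\eqref{2k+1} is inherited (indeed with room to spare). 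By minimality $G'$ has $k$ disjoint cycles unless $|G'|<|G|$ fails to be an obstruction — but here the usual issue is that a short cycle through $v_{xy}$ in $G'$ need not lift. Since $G$ is minimal, $G'$ is not a counterexample, so $G'$ has $k$ disjoint cycles; a cycle through $v_{xy}$ lifts to one through the path $yxz$ (of length one longer), and cycles avoiding $v_{xy}$ lift verbatim, yielding $k$ disjoint cycles in $G$ — contradiction. This actually closes the argument, so the role of the degree bound $2k+1$ rather than $2k$ is precisely to give the ``with room to spare'' slack that makes the contracted graph still satisfy the hypothesis.

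I expect the main obstacle to be the bookkeeping that makes the contraction step go through cleanly in all corner cases: one must be sure $G'$ is nonempty, that $G'$ is genuinely triangle-free (true since we chose $yz\notin E(G)$), and that the shifted inequality~\eqref{2k+1} really does survive the contraction — here $x$ has degree $2\le 2k-1$ so $x\in V_{\le 2k-1}(G)$, and after contraction $v_{xy}$ has degree $\ge d_G(y)$, which could be as small as $2$ as well, so in the worst case $|V_{\ge 2k+1}|$ drops by one and $|V_{\le 2k-1}|$ drops by one, leaving the difference unchanged; thus~\eqref{2k+1} is preserved exactly, which suffices. The only real subtlety is handling the base of the induction on $k$: for $k=3$ one should check directly that~\eqref{2k+1}, i.e. $|V_{\ge 7}(G)|-|V_{\le 5}(G)|\ge 4$, together with triangle-freeness, forces three disjoint cycles; but this is subsumed in the minimal-counterexample argument above since the reductions never increase $k$. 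Therefore no separate base case in $k$ is needed, and the whole proof is a single clean contraction argument exploiting the $+1$ in the degree threshold.
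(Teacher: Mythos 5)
There is a genuine gap at the heart of your contraction step: the claim that $G' = G\diagup xy$ is ``again triangle-free ... since we chose $yz\notin E(G)$'' is false. With $N(x)=\{y,z\}$, the new vertex $v_{xy}$ is adjacent to $z$ and to all of $N(y)\setminus\{x\}$; triangle-freeness of $G$ forbids edges inside $N(y)$ and forbids $yz$, but it does \emph{not} forbid an edge $zw$ with $w\in N(y)$. Such an edge creates the triangle $v_{xy}zw$ in $G'$, coming from the $4$-cycle $xzwy$ in $G$. Once $G'$ may contain a triangle, your appeal to minimality is void (the lemma's hypothesis includes triangle-freeness), and the ``single clean contraction argument'' does not close. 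This is exactly the fork the paper's proof is built around: it contracts $xy$ for a vertex $x$ of degree at most $2k-1$ (obtained from Theorem~\ref{thm:C-H}, not from forcing $d(x)=2$ --- your deduction ``so there is a vertex $x$ with $d(x)=2$'' from $\delta(G)<2k-1$ is also unjustified for $k\ge3$), and then observes that either $G\diagup xy$ has $k$ disjoint cycles or it has a triangle. In the latter case the triangle lifts to a $4$-cycle $xyzw$ in $G$; since $G$ is triangle-free no outside vertex sends three edges into this $4$-cycle, so deleting it lowers all degrees by at most $2$, the shifted hypothesis~\eqref{2k+1} passes to $G-\{w,x,y,z\}$ with $k'=k-1$, and one inducts on $k$. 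That induction on $k$ is also why the paper needs the $k=1$ base case, which you dismiss as ``subsumed''; in the correct argument it is not.

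A secondary issue: your stated analogue of Property~\ref{prop:minimal}(2) uses the thresholds $\{2k-1,2k\}$, but for the sets $V_{\ge 2k+1}$ and $V_{\le 2k-1}$ the edge-deletion argument gives degrees in $\{2k,2k+1\}$. You never use this, and the paper does not need it either, but it signals that the bookkeeping for the shifted thresholds has not been carried through. Your handling of the case $\delta(G)\ge 2k-1$ via Theorem~\ref{thm:K-K-Y} and the independence-number bound is workable (the paper uses Theorem~\ref{thm:C-H} more directly after establishing $|G|\ge 4k$ from two adjacent high-degree vertices with disjoint neighborhoods), but it does not rescue the main contraction step.
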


\begin{proof}
 Suppose the lemma does not hold and consider the smallest $k$ such that there exists a counterexample. 
 Among all such counterexamples, choose the graph $G$ to be minimal. 
First consider $k = 1$.  Since $|V_{\geq 3} (G)| \geq |V_{\leq 1} (G)|$, $G$ contains a component with average degree at least $2$.  
Therefore, $G$ contains a cycle and the claim holds. Now, let $k\geq 2$.

By~\eqref{2k+1}, the sum of the degrees of  vertices in $V_{\geq 2k} (G)$ is greater than the sum of degrees of vertices in $V_{\leq 2k-1} (G)$. 
Thus there are
 vertices $u,v \in V_{\geq 2k} (G)$ such that $uv \in E(G)$.  
 Since $G$ is triangle-free, $N(v) \cap N(u) = \emptyset$ and so $|G| \geq 4k$.  
 Since $G$ has no $k$ disjoint cycles, by Theorem~\ref{thm:C-H},  $G$ has a vertex $x \in V_{\leq 2k-1} (G)$.

As in Property~\ref{prop:minimal}, if  $d(x) \leq 1$, then $G - x$ is a smaller counterexample, so $d(x) \geq 2$. Let $y \in N(x)$.  Since $G$ is triangle-free,
contracting the edge $xy$ does not change the degree of any vertex distinct from $x,y$. 
By the minimality of $G$, $G \diagup xy$ contains  either $k$ disjoint cycles or  a triangle. 
 If $G \diagup xy$ contains $k$ disjoint cycles, then $G$ does as well.  Otherwise, let $v_{xy} zw$ be a triangle in $G \diagup xy$. 
Then by symmetry we may assume  $xyzw$ is a 4-cycle in $G$.  Every vertex in $G - \{ w,x,y,z\}$ is adjacent to at most $2$ vertices in $\{w,x,y,z\}$.

Let $k' = k - 1$ and $G' = G - \{w,x,y,z\}$.  Then, for each $v \in V(G')$, $d_{G'} (v) \geq d_{G}(v) - 2$, so $|V_{\geq 2k' + 1} (G')| \geq |V_{\geq 2k + 1} (G)| - 3$ and 
$|V_{\leq 2k' -1}(G')| \leq |V_{\leq 2k -1} (G)| - 1$.  Therefore,
\begin{equation*}\label{eq:2k-1}
|V_{\geq 2k'+1} (G')| -|V_{\leq 2k' - 1} (G')|\geq |V_{\geq 2k + 1} (G)| - 3- ( |V_{\leq 2k -1} (G)| - 1)\geq  2k -2 - 2 = 2k' - 2.
\end{equation*}
By the minimality of $G$, $G'$ contains $k'$ disjoint cycles. Hence $G$ contains $k$ disjoint cycles.
\end{proof}

Suppose that Theorem~\ref{thm:1tri} is false and let $k$ be the smallest integer such that there exists a a counterexample. 
 Among all counterexamples, choose $G$  to be minimal.
\begin{lemma}\label{lemma:4k-1}
$|G| \geq 4k-1$ and $L \neq \emptyset$.
\end{lemma}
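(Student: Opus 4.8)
\textbf{Proof plan for Lemma~\ref{lemma:4k-1}.}
The plan is to prove the two claims $|G| \geq 4k-1$ and $L \neq \emptyset$ together, leaning on the minimality of $G$ and the structural facts already established (Property~\ref{prop:minimal}, Corollary~\ref{cor:K-K-Y}, and Theorem~\ref{thm:C-H}). First I would handle $L \neq \emptyset$: if $L = \emptyset$ then $\delta(G) \geq 2k-1$, and since $|H| - |L| = h \geq 2k$ we may invoke Corollary~\ref{cor:K-K-Y} once we know $|G| \geq 3k$. So the first order of business is to rule out $|G| \leq 3k-1$, or more precisely to show the stronger bound $|G| \geq 4k-1$, from which $L \neq \emptyset$ will follow from the planar-style degree-sum argument used for Theorem~\ref{thm:1tri} itself (a graph with no two disjoint triangles has bounded independence-type structure), or more simply: if $|G| \leq 4k-2$ we derive a contradiction directly.

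For the main bound, I would argue as follows. Since $h \geq \ell + 2k \geq 2k$, there is at least one vertex $u$ of degree at least $2k$; I want to find enough "room" around high-degree vertices to force a large vertex set. The key leverage is the hypothesis that $G$ has no two disjoint triangles. Suppose toward a contradiction that $|G| \leq 4k-2$. By Property~\ref{prop:minimal}, $\delta(G) \geq 2$, and every edge has an endpoint of degree in $\{2k-1, 2k\}$. I would first dispose of the case $L = \emptyset$ using Corollary~\ref{cor:K-K-Y} (noting $|G| \geq 3k$ must hold, else $h \geq 2k$ forces $\Delta \geq 2k > |G|-1$, impossible once $|G| \leq 3k-1$ and $k \geq 3$ — here one checks $2k \leq 3k-2$, i.e.\ $k \geq 2$). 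Then assuming $L \neq \emptyset$, pick $x \in L$ with $d(x) \geq 2$; as in the proof of Lemma~\ref{lemma:2k-1}, either $x$ lies in a triangle or we can contract an edge $xy$ at $x$ and apply minimality. If $x$ is in a triangle $T$, then since $G$ has no two disjoint triangles, $T$ is a "transversal" triangle: every triangle of $G$ meets $T$. I would then delete $T$ (or contract around it) to pass to a smaller graph $G' = G - T$ on $\geq 3(k-1)$ vertices with no two disjoint triangles — in fact $G'$ is triangle-free — track the degree drops (each vertex loses at most $3$, or at most $2$ if it is not adjacent to all of $T$), and verify that the parameter $|V_{\geq 2(k-1)+1}(G')| - |V_{\leq 2(k-1)-1}(G')|$ or $|H_{k-1}(G')| - |L_{k-1}(G')|$ still satisfies the hypothesis needed to apply Lemma~\ref{lemma:2k-1} (if $G'$ is triangle-free) or the inductive form of Theorem~\ref{thm:1tri}. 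This yields $k-1$ disjoint cycles in $G'$, hence $k$ in $G$, a contradiction — establishing $|G| \geq 4k-1$.

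To convert $|G| \geq 4k-1$ into $L \neq \emptyset$ is then immediate: if $L = \emptyset$, apply Corollary~\ref{cor:K-K-Y} with $|G| \geq 4k-1 \geq 3k$ and $h \geq 2k$ and $\delta(G) \geq 2k-1$, giving $k$ disjoint cycles, a contradiction. So both assertions hold.

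\textbf{Main obstacle.} The delicate part will be the bookkeeping of the degree changes when we remove the transversal triangle $T$ (or the $4$-cycle $xyzw$ arising from the contraction trick), and confirming that the resulting graph still meets the numerical hypothesis — in particular accounting for the (at most one) vertex adjacent to all three vertices of $T$, mirroring the $v_1$ analysis in the proof of Theorem~\ref{thm:planar}. Getting the constant exactly right so that the induction closes — i.e.\ that the loss on the left side of the inequality is at most $2$ (one from a deleted low vertex on the $L$ side, three minus the savings on the $H$ side) — is where the "no two disjoint triangles" hypothesis must be used sharply, since it is what guarantees $G - T$ is triangle-free and hence that Lemma~\ref{lemma:2k-1} applies with its weaker threshold $2k-2$.
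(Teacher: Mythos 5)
Your overall strategy --- derive a contradiction by producing $k$ disjoint cycles in the supposed counterexample --- cannot by itself prove this lemma, because the lemma must in particular rule out $|G|\leq 3k-1$, and a graph on fewer than $3k$ vertices has no $k$ disjoint cycles for trivial reasons, so no contradiction of that form is available in that range. Your attempt to dispose of small $|G|$ directly is incorrect: you claim $h\geq 2k$ forces $\Delta(G)\geq 2k>|G|-1$ when $|G|\leq 3k-1$, but $|G|-1$ can be as large as $3k-2\geq 2k$ for $k\geq 2$; you even record the inequality $2k\leq 3k-2$, which points the wrong way for your conclusion. The paper's proof of Lemma~\ref{lemma:4k-1} avoids cycles and minimality entirely: it is a counting argument using only $h\geq\ell+2k$ and the absence of two disjoint triangles. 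Assuming $|G|\leq 4k-2$, every vertex of $H$ has at least two neighbors in $H$ and any two vertices of $H$ have at least two common neighbors; either $G[H]$ is complete (and $h\geq 2k\geq 6$ gives two disjoint triangles at once), or one takes nonadjacent $x,y\in H$ and builds a set $Q=\{v,w,y,z\}$ spanning a $K_4$ such that every vertex $u$ of $P=N(x)\setminus Q$ must lie in $L$, since otherwise $x$, $u$ and a common neighbor $t$ form a triangle disjoint from a triangle in $Q-t$. Then $\ell\geq|P|\geq 2k-3\geq k$, so $|G|\geq h+\ell\geq 2\ell+2k\geq 4k$, a contradiction. The claim $L\neq\emptyset$ then does follow from Corollary~\ref{cor:K-K-Y} exactly as you say.

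Even setting aside the small-$|G|$ issue, the triangle-removal step of your plan has a problem that your ``main obstacle'' paragraph underestimates. If $v$ is adjacent to all of $T$ and $d_G(v)=2k$, then $d_{G-T}(v)=2k-3=2(k-1)-1$, so $v$ lands in $V_{\leq 2(k-1)-1}(G-T)$, on the wrong side of the threshold of Lemma~\ref{lemma:2k-1}, costing $2$ in the relevant difference for each such vertex; and the bound of at most one such vertex that you import from the proof of Theorem~\ref{thm:planar} is a planarity ($K_{3,3}$) argument that is unavailable here. The paper defers exactly this analysis to Lemma~\ref{lemma:LNoTriangle}, where the set $B$ of such vertices is controlled by a case analysis (independent $B$, $|B|\geq3$, and the $K_5$ case $|B|=2$), and that lemma in turn relies on the size bound from Lemma~\ref{lemma:4k-1} to apply Theorem~\ref{thm:2k+t} to $G-T$ --- so folding that analysis into the proof of Lemma~\ref{lemma:4k-1} risks circularity. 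Similarly, ``contract $xy$ and apply minimality'' is not immediate, because contracting an edge can create a new triangle disjoint from an old one and thereby destroy the hypothesis of Theorem~\ref{thm:1tri}; the paper handles contractions through Theorem~\ref{thm:2k+t} with its triangle parameter $t$, via Lemma~\ref{lemma:induction}.
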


\begin{proof}
Suppose $|G| \leq 4k - 2$. For all $u \in H$, $|N(u) \cap H| \geq 2$ and if also $w \in H$ then $|N(w) \cap N(u)| \geq 2$.
  It suffices to show that $G$ has two disjoint triangles.  As $h \geq 2k \geq 6$, if $G[H]$ is a complete graph, then we are done, so assume there are $x,y \in H$ with $xy \notin E(G)$. 
  
  Choose $w \in N(x) \cap H$, $z\in N(y)\cap H - w$, and $v\in N(w) \cap N(x) -z$.  If $N(y) \cap N(z) \neq \{v,w\}$, then there are two triangles in $G$;  else put $Q=\{v,w,y,z\}$ and $P= N(x) \setminus Q$. 
Now $|P|\geq2k-3\geq k$. If there is $u\in P$ with $d(u)\geq2k-1$, then there
is $t\in N(x)\cap N(u)$. Thus $txu$ is a triangle, and $Q -t$
contains another  triangle. So $V(P) \subseteq L$ and $|L | \geq k$.  Therefore, $|G|\geq h + \ell \geq 2\ell+2k \geq 4k$. 
\end{proof}

\begin{lemma}\label{lemma:LNoTriangle}
If $x \in L$, then $x$ is not contained in a triangle.
\end{lemma}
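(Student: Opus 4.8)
The plan is to prove Lemma~\ref{lemma:LNoTriangle} by contradiction using the minimality of $G$, exploiting the fact that $G$ contains no two disjoint triangles. Suppose $x \in L$ lies in a triangle $X = xab$. Since $G$ has no two disjoint triangles and $|G| \geq 4k-1$ is large, this single triangle $X$ is a kind of "hub"; in particular every other triangle of $G$ meets $\{x,a,b\}$. The idea is to delete or contract near $x$ to produce a smaller graph still satisfying the hypotheses of either Theorem~\ref{thm:1tri} (with the same $k$) or Lemma~\ref{lemma:2k-1}, obtaining $k$ disjoint cycles and hence a contradiction.

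First I would record the structural consequences of "no two disjoint triangles": for any vertex $v \notin \{x,a,b\}$, $v$ cannot be in a triangle avoiding $X$, so in particular $\|v, X\| \le 2$ would follow if $v$ together with two vertices of $X$ formed a triangle disjoint from the third vertex of $X$ — but more carefully, if $v$ is adjacent to all of $x,a,b$ then $\{v,a,b\}$ and... no, that still meets $X$. The genuinely useful bound is: no two vertices $u,v \notin X$ can both have $\|u,X\|, \|v,X\| = 3$ with $u \not\sim v$ forming... — here I would instead argue that $G - X$ is triangle-free after possibly also noting that any triangle in $G$ uses a vertex of $X$. Then set $G' = G - X$; it is triangle-free, $|G'| = |G| - 3 \ge 4k-4 > 0$, and each $v \in V(G')$ has $d_{G'}(v) \ge d_G(v) - 3$. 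The degree shift by $3$ is awkward for Lemma~\ref{lemma:2k-1}, which is why I would instead delete only $x$ together with a contraction, mirroring the proof of Lemma~\ref{lemma:2k-1}: pick $y \in N(x)$; since $x \in L$ and $x$ is in triangle $X$, the edge $xy$ may lie in $X$. Handle the two cases. If $y \notin \{a,b\}$, contract $xy$: as in earlier proofs this changes no degree except at the contracted vertex, so $G\diagup xy$ still satisfies the hypotheses with the same $k$ and is smaller, giving $k$ disjoint cycles (and $G\diagup xy$ either has its own $k$ cycles, lifting to $G$, or we can use minimality directly since it is a counterexample-free smaller graph), a contradiction. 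If every neighbor of $x$ lies in $X$, then $N(x) = \{a,b\}$, $d(x) = 2$, and $x \in L$ forces $2k - 2 \ge 2$; here I would delete $X$ entirely and argue on $G' = G - X$, which is triangle-free, so apply Lemma~\ref{lemma:2k-1} with $k' = k-1$ after checking $|V_{\ge 2k'+1}(G')| - |V_{\le 2k'-1}(G')| \ge 2k' - 2$, using that deleting three vertices drops each degree by at most $3$ so moves each vertex down at most one threshold-pair and that $x$ itself contributed to $L$.

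The main obstacle I anticipate is the bookkeeping in the case $N(x) \subseteq X$: deleting three vertices can push a vertex of $H = V_{\ge 2k}$ down by $3$ in degree, potentially out of $V_{\ge 2k'+1} = V_{\ge 2k-1}$ only if its $G$-degree was exactly $2k$ or $2k+1$, and such vertices must then be adjacent to at least... — precisely, a vertex $v \in H$ with $\|v,X\| = 3$ would form a triangle with two vertices of $X$, disjoint from the third only if that third is not $x$; but $x$ has degree $2$ with $N(x) = \{a,b\}$, so $\{v,a,b\}$ is a triangle disjoint from... no vertex of $X$ except we need disjointness from some triangle — actually $\{v,a,b\}$ shares $a,b$ with $X$. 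The real point: $X$ minus any one vertex plus $v$ is a triangle only if $v$ is adjacent to the other two; such configurations are controlled by "no two disjoint triangles" only when the two triangles are disjoint, which needs $\|v,X\|=3$ and a triangle on $V(G)\setminus(X\cup\{v\})$. So I would instead bound: at most one vertex outside $X$ has $\|v,X\| = 3$ — if $u,v$ both did, then $uab$ and... still not disjoint. Hence the cleanest route is the contraction case plus, in the degenerate case $N(x)=\{a,b\}$, showing $ab \in E(G)$ lets us delete just $x$ and one of $a,b$ is not needed — rather, delete $x$ and contract: since $N(x) = \{a,b\}$ and $ab \in E(G)$, the graph $G - x$ still has $a,b$ of degree $\ge 2k - 1 > \dots$, and $G - x$ is a smaller graph; I would check $|H_k(G-x)| - |L_k(G-x)| \ge 2k$ directly (we lose only $x \in L$ from $L$, and $a,b$ drop by $1$ so may leave $H$ but cannot enter $L$ since their new degree is $\ge 2k - 1$ once we know $d(a), d(b) \ge 2k$... which need not hold). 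If $d(a) = 2k-1$ is possible then $a \notin H$ already and nothing is lost. So $G - x$ satisfies the hypotheses and is a smaller counterexample unless it is not a counterexample, i.e. it has $k$ disjoint cycles — but those lie in $G$, contradiction. This last reduction is the delicate one and I would spell out the three subcases for $(d(a), d(b))$ relative to the thresholds $2k-1, 2k$ carefully.
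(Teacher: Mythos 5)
Your overall strategy --- delete the triangle through $x$, observe that the remainder is triangle-free, and apply Theorem~\ref{thm:2k+t} or Lemma~\ref{lemma:2k-1} with $k-1$ --- is the right one, and you correctly identify the central obstacle: vertices $v$ with $\|v,X\|=3$ lose three in degree when $X$ is deleted, which ruins the count $|H_{k'}(G')|-|L_{k'}(G')|\geq 2k'$. But none of your attempted resolutions actually closes this gap. Your claim that at most one vertex outside $X$ can satisfy $\|v,X\|=3$ is false (two such vertices need not produce two disjoint triangles, as you yourself notice mid-sentence), and your fallback of applying Lemma~\ref{lemma:2k-1} to $G-X$ fails for exactly this reason: a vertex $v\in H$ with $d_G(v)=2k$ and $\|v,X\|=3$ lands in $V_{\leq 2k'-1}(G-X)$, flipping its contribution from $+1$ to $-1$, and you have no bound on how many such vertices there are. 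The contraction branch ($y\notin\{a,b\}$) is also broken: $G\diagup xy$ may contain two disjoint triangles (contraction creates triangles), in which case it is outside the hypothesis class of Theorem~\ref{thm:1tri} and minimality gives you nothing; moreover if $xy$ lies in some triangle $xyw$ the degree of $w$ drops, so degrees are not preserved as you assert. Finally, in the subcase $N(x)=\{a,b\}$ your deletion of $x$ alone only yields $|H_k(G-x)|-|L_k(G-x)|\geq 2k-1$ when both $a$ and $b$ have degree exactly $2k$, which is short of the hypothesis.

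The paper's proof resolves the obstacle by making the set $B=\{v:\|v,T_0\|=3\}$ the explicit object of study: it chooses the triangle $T_0$ through $x$ to minimize $|B|$ and then splits into cases. If $|B|\leq 1$ (or $|B|=2$ with few $H$-vertices involved), deleting $T_0$ costs at most one extra unit and Theorem~\ref{thm:2k+t} applies with $k'=k-1$. If $B$ is independent of size at least $2$, one swaps a vertex of $B$ into the triangle (forming $T_1=xyu_1$), shows that any external vertex fully joined to $T_1$ would create two disjoint triangles, and deletes $T_1$ instead. If $|B|\geq 3$ with an edge inside $B$, two disjoint triangles appear outright. In the remaining case $B\cup T_0\cong K_5$, and the no-two-disjoint-triangles hypothesis forces every outside vertex to have at most one neighbor in this $K_5$; deleting all five vertices then drops degrees by only $1$, which is precisely the shift that Lemma~\ref{lemma:2k-1} (with its $2k+1$/$2k-1$ thresholds) is designed to absorb. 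This case analysis on $B$ is the missing idea in your proposal.
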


\begin{proof}
Let $x \in L$ and suppose $T_{0}$ is a triangle in $G$ containing $x$.  Let $B = B(T_{0}) = \{ v \in V(G) : \|v, T_{0} \| = 3 \}$ and fix $T_{0} = xyz$ so that $|B|$ is minimized.  Let $k' = k-1$, $G' = G - T_{0}$.  For each $v \in V(G') - B$, $d_{G'}(v) \geq d_{G}(v) - 2$, so $|H_{k'}(G')| \geq |H \setminus (B \cup T_{0})|$ and $|L_{k'}(G')| \leq |L \setminus (B \cup T_{0})|\leq \ell-1$.

If $|B| \leq 1$, then $|H_{k'}(G')|-|L_{k'}(G')| \geq (h - 3)-(\ell-1) \geq  2k - 2 = 2k'$. 
 Since $G'$ is triangle-free, by Theorem~\ref{thm:2k+t}, $G'$ contains $k-1$ disjoint cycles. Then $G$ contains $k$ disjoint cycles. 
 Similarly, if $|B| = 2$ and $B \cup T_{0}$ contains at most $3$ vertices in $H$, then $G'$ contains $k$ disjoint cycles.  
So we may assume that $|B| \geq 2$ and $B \cup T_{0}$ contains at least $4$ vertices in $H$.  We complete the proof in $3$ cases.

\medskip
\noindent \textbf{Case 1:} \emph{$B$ is an independent set.}  Let $u_{1}, u_{2} \in B$ and $T_{1} = xyu_{1}$.  If $v \notin B \cup V(T_{0})$ 
and $\| v, T_{1} \| = 3$, then $xu_{1}v$ and $yzu_{2}$ are two disjoint triangles in $G$. 
 Let $k' = k- 1$, $G'' = G - T_{1}$.  For each $v \in V(G'') - z$, $d_{G''}(v) \geq d_{G}(v) - 2$ and $d_{G''}(z) = d_{G}(z) - 3$.  
So possibly $z \in H \setminus H_{k'}(G'')$ or $z \in L_{k'}(G'') \setminus L$, but not both, i.e., $| \{z\} \cap H|+ | \{z\}  \cap L''|\leq 1$.  Therefore,
\begin{align}
|H_{k'}(G'')| -|L_{k'}(G'')| &\geq (h - 2 - | \{z\} \cap H|)-(\ell-1+  | \{z\}  \cap L_{k'}(G'')|\nonumber \\
&\geq (h-\ell) -1 - (| z \cap H |+ | \{z\}  \cap L_{k'}(G'')|) \label{eq:LNoTriangle1} \\
&\geq 2k-2=2k'. \nonumber  
\end{align}
By Theorem~\ref{thm:2k+t}, $G''$ contains $k-1$ disjoint cycles. Then $G$ contains $k$ disjoint cycles.

\medskip
\noindent \textbf{Case 2:} \emph{$|B| \geq 3$.}  Let $u_{1}, u_{2}, u_{3} \in B$ and, by Case~1 assume $u_{1}u_{2} \in E(G)$.   Then $xu_{1}u_{2}$ and $yzu_{3}$ are two triangles in $G$, a contradiction.

\medskip
\noindent \textbf{Case 3:} \emph{$|B| =2$.}  Let $u_{1}, u_{2} \in B$ and, by Case~1, assume $u_{1}u_{2} \in E(G)$. 
 In particular $B \cup T_{0} \cong K_{5}$ and every vertex in $B \cup T_{0}$ apart from $x$ is in $H$.  If $v \notin B \cup T_{0}$ is adjacent to $2$ vertices in $B \cup T_{0}$, then $G$ contains $2$ disjoint triangles, a contradiction.  Let $k' = k - 1$ and $G' = G - (B \cup T_{0})$.  For each $v \in V(G')$, $d_{G'} (v) \geq d_{G} (v) - 1$.  In particular, $|V_{2k'+1}(G')| \geq h - 4$ and $|V_{2k'-1}(G')| \leq \ell - 1$.  Therefore,
\begin{equation}\label{eq:LNoTriangle3}
|V_{2k'+1}(G')| -|V_{2k'-1} (G')|\geq (h - 4)-(\ell-1) \geq  2k - 3 = 2k' - 1.
\end{equation}
The graph $G'$ is triangle-free, so by Lemma~\ref{lemma:2k-1}, $G'$ contains $k-1$ disjoint cycles. Then $G$ contains $k$ disjoint cycles.
\end{proof}

\begin{lemma}\label{lemma:C4}
If $x,z \in L$, then $|N_{G}(x) \cap N_{G}(y)| \leq 1$.
\end{lemma}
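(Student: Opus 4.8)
The plan is to argue by contradiction. Suppose $x,z\in L$ and $|N_G(x)\cap N_G(z)|\ge 2$, say $y_1,y_2\in N_G(x)\cap N_G(z)$ are distinct; since no vertex is adjacent to itself, the four vertices $x,z,y_1,y_2$ are pairwise distinct, so $C:=xy_1zy_2$ is a $4$-cycle of $G$. I would delete $C$, find $k-1$ disjoint cycles in $G':=G-C$ by invoking Theorem~\ref{thm:2k+t} with parameter $k-1$, and then adjoin $C$, contradicting the assumption that $G$ has no $k$ disjoint cycles.

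The key step, and the only one that really uses the hypotheses, will be the bound
\[ \|v,C\|\le 2 \quad\text{for every } v\in V(G)\setminus C. \]
To see this, note that $xy_i,zy_i\in E(G)$ for $i\in\{1,2\}$. Hence if a vertex $v\notin C$ were adjacent to $y_i$ and also to $x$ (respectively to $z$), then $G[\{v,x,y_i\}]$ (respectively $G[\{v,z,y_i\}]$) would be a triangle containing the low-degree vertex $x$ (respectively $z$), contradicting Lemma~\ref{lemma:LNoTriangle}. Thus any $v\in V(G)\setminus C$ having a neighbour in $\{y_1,y_2\}$ has no neighbour in $\{x,z\}$, so its neighbourhood in $C$ is contained in $\{y_1,y_2\}$; otherwise its neighbourhood in $C$ is contained in $\{x,z\}$. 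Either way $v$ has at most two neighbours in $C$.

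Granting this, the rest is routine counting. With $G'=G-C$ and $k'=k-1$, the displayed bound gives $d_{G'}(v)\ge d_G(v)-2$ for all $v\in V(G')$, so $H\setminus C\subseteq H_{k'}(G')$ and $L_{k'}(G')\subseteq L\setminus C$. Since $x,z\in L\cap C$ and $x,z\notin H$, we have $|L\cap C|\ge 2\ge |H\cap C|$, whence $|H_{k'}(G')|-|L_{k'}(G')|\ge (h-\ell)+|L\cap C|-|H\cap C|\ge h-\ell\ge 2k=2k'+2$. As $G'\subseteq G$ contains no two disjoint triangles, $t_{G'}\le 1$, so $|H_{k'}(G')|-|L_{k'}(G')|\ge 2k'+2\ge 2k'+t_{G'}$. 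Finally $k'=k-1\ge 2$ (because $k\ge 3$) and, by Lemma~\ref{lemma:4k-1}, $|G'|=|G|-4\ge 4k-5\ge 3k-3=3k'$. Therefore Theorem~\ref{thm:2k+t} applied to $G'$ yields $k-1$ disjoint cycles, which together with $C$ give $k$ disjoint cycles in $G$, a contradiction.

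I expect the only genuine obstacle to be the structural bound $\|v,C\|\le 2$; once it is in hand, the degree count has ample slack (the margin $2k'+2$ comfortably absorbs both $t_{G'}\le 1$ and the possibility that $y_1$ or $y_2$ lies in $H$), and the size hypothesis $|G'|\ge 3k'$ is immediate from Lemma~\ref{lemma:4k-1}. One could alternatively reduce to a smaller instance of Theorem~\ref{thm:1tri} when $k\ge 4$ and handle $k=3$ separately, but routing through Theorem~\ref{thm:2k+t} treats all $k\ge 3$ at once.
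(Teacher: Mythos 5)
Your proposal is correct and follows essentially the same route as the paper: form the $4$-cycle on $x,z$ and their two common neighbours, use Lemma~\ref{lemma:LNoTriangle} to bound each outside vertex's attachment to the cycle by $2$, and then apply Theorem~\ref{thm:2k+t} with parameter $k-1$ to $G$ minus the cycle. Your verification of the side conditions ($k'\ge 2$, $|G'|\ge 3k'$ via Lemma~\ref{lemma:4k-1}, and $t_{G'}\le 1$) is slightly more explicit than the paper's, but the argument is the same.
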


\begin{proof}
Suppose $w,y \in N_{G}(x) \cap N_{G}(z)$. Then $X = wxyz$  is a copy of $C_{4}$ in $G$.  If $v \notin X$ is adjacent to at least $3$ vertices in $X$, then either $x$ or $z$ is contained in a triangle, contradicting Lemma~\ref{lemma:LNoTriangle}.  Let $G' = G - X$.  For each $v \in V(G')$, $d_{G'}(v) \geq d_{G}(v) - 2$.  Therefore,
\begin{equation}\label{eq:C4}
|H_{k'}(G')|- |L_{k'}(G')|  \geq (h - 2)-(\ell-2) \geq  2k = 2k' + 2.
\end{equation}
Since $G'$ contains at most $1$ triangle, by Theorem~\ref{thm:2k+t}, $G'$ contains $k-1$ disjoint cycles. Then $G$ contains $k$ disjoint cycles.
\end{proof}

Let $L = \{x_{1}, \ldots, x_{\ell} \}$ and, for each $i$, let $y_{i} \in N_{G}(x_{i})$.  Starting with the graph $G = G_{0}$, we construct a sequence of graphs by defining $G_{i} = G_{i-1} \diagup x_{i}y_{i}$.  For simplicity, if we contract the edge $x_{i}y_{i}$, we label the contracted vertex in $G_{i}$ as $y_{i}$.   We terminate this process if $G_{i}$ contains $k$ cycles or when $i = \min \{ \ell, k - 1\}$.  Suppose, after terminating the process, we have defined graphs $G_{0}, \ldots, G_{r}$ for some non-negative integer $r$.

\begin{lemma}\label{lemma:induction}
For the graphs $G_{0}, \ldots, G_{r}$ and $i \in \{0, \ldots, r\}$, all of the following hold:
\begin{enumerate}
\item $|G_{i}| = |G_{0}| - i \geq 3k$;
\item if $i < r$, then $G_{i}$ contains $i+1$ disjoint triangles;
\item $L_{i}$ is an independent set;
\item if $x \in L_{k}(G_{i})$, then $N_{G_{i}}(x)$ is an independent set;
\item if $x,x' \in L_k(G_{i})$, then $|N_{G_{i}}(x) \cap N_{G_{i}}(x')| \leq 1$;
\item  $L_{k}(G_{i}) = L_{0} - \{ x_{1}, \ldots, x_{i} \}$ and $H_{k}(G_{i}) \supseteq H_{k}(G_{0})$;
\item if $i \geq 1$ and $G_{i}$ contains $k$ disjoint cycles, then $G_{i-1}$ does as well.
\end{enumerate}
\end{lemma}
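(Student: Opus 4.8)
The plan is to prove Lemma~\ref{lemma:induction} by induction on $i$, establishing all seven statements simultaneously. The base case $i=0$ is essentially a restatement of the hypotheses of Theorem~\ref{thm:1tri} together with the lemmas already proved: \emph{(1)} follows from Lemma~\ref{lemma:4k-1} (note $|G_0|\geq 4k-1\geq 3k$), \emph{(2)} is vacuous only if $r=0$ and otherwise is the statement that $G$ contains a triangle, which it does since $L\neq\emptyset$ forces... no---rather, \emph{(2)} for $i=0$ asks for one triangle, and if $G$ had no triangle then by Lemma~\ref{lemma:trianglefree} (with the degree shift) $G$ would have $k$ disjoint cycles; \emph{(3)} is Lemma~\ref{lemma:LNoTriangle} (an edge in $L$ would put both endpoints, hence a vertex of $L$, in a triangle), \emph{(4)} is again Lemma~\ref{lemma:LNoTriangle}, \emph{(5)} is Lemma~\ref{lemma:C4}, \emph{(6)} is trivial, and \emph{(7)} is vacuous.

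For the inductive step, assume the statements hold for $G_{i-1}$ with $i\leq r$; I must verify them for $G_i=G_{i-1}\diagup x_iy_i$. The key structural observation, to be established first, is that \emph{the contraction $x_iy_i$ does not change the degree of any vertex other than the contracted one}: by \emph{(4)} for $G_{i-1}$, $N_{G_{i-1}}(x_i)$ is independent, so $x_i$ lies in no triangle of $G_{i-1}$, hence for $v\notin\{x_i,y_i\}$ we have $d_{G_i}(v)=d_{G_{i-1}}(v)$, and $d_{G_i}(y_i)\geq d_{G_{i-1}}(y_i)$ since $y_i$ inherits all neighbors of $x_i$. This immediately gives \emph{(6)}: no vertex outside $\{x_i\}$ drops out of $H_k$ or into $L_k$, and $x_i$ itself (which had $d\leq 2k-2$) is removed, with the new vertex $y_i$ having degree $\geq d_{G_{i-1}}(y_i)$; since $y_i\in N_{G_{i-1}}(x_i)$ and $x_i\in L$, Property~\ref{prop:minimal}(2) applied in the original $G$... actually more simply, we just record $L_k(G_i)=L_k(G_{i-1})-x_i$ and $H_k(G_i)\supseteq H_k(G_{i-1})$, which unwinds to \emph{(6)}. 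Then \emph{(1)} is immediate from $|G_i|=|G_{i-1}|-1$ together with the fact that $i\leq r\leq k-1$ and $|G_0|\geq 4k-1$, so $|G_i|\geq 4k-1-(k-1)=3k$. Statements \emph{(3)}, \emph{(4)}, \emph{(5)} for $G_i$ follow from \emph{(6)}: the low-degree set only shrank, its vertices keep the same neighborhoods (by the degree-invariance observation, and since $x_i\notin L_k(G_i)$ we never need to examine $y_i$'s neighborhood as part of an $L$-vertex's neighborhood unless $y_i$ itself is in $L_k(G_i)$, in which case $y_i$ inherited a possibly larger but still structurally controlled neighborhood---this is the one point needing a little care, handled by noting $N_{G_i}(y_i)=N_{G_{i-1}}(x_i)\cup N_{G_{i-1}}(y_i)\setminus\{x_i,y_i\}$ and using independence of $N_{G_{i-1}}(x_i)$ plus the fact that a triangle through $y_i$ in $G_i$ would lift to a triangle in $G_{i-1}$ through $x_i$ or $y_i$, the former impossible by \emph{(4)}).

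For \emph{(7)}: if $G_i$ has $k$ disjoint cycles, then since $G_i=G_{i-1}\diagup x_iy_i$ and uncontracting an edge turns a cycle through the contracted vertex into a (possibly longer) cycle, $G_{i-1}$ has $k$ disjoint cycles. The remaining and most substantive point is \emph{(2)}: assuming $i<r$, I must show $G_i$ has $i+1$ disjoint triangles. Since the process did not terminate at step $i$, $G_i$ has no $k$ disjoint cycles, and $i<r\leq\min\{\ell,k-1\}$. The idea is that $G_i$ satisfies the hypotheses needed to invoke Theorem~\ref{thm:2k+t} (or a variant) on $G_i$ with parameter $k$: we have $|G_i|\geq 3k$ by \emph{(1)}, and $|H_k(G_i)|-|L_k(G_i)|\geq (h-\ell)+i\geq 2k+i$ by \emph{(6)} and the hypothesis $h-\ell\geq 2k$; if $t_{G_i}\leq i$ then Theorem~\ref{thm:2k+t} gives $k$ disjoint cycles in $G_i$, a contradiction, so $t_{G_i}\geq i+1$. \textbf{The main obstacle} I anticipate is precisely this last step—cleanly quantifying $t_{G_i}$ against the $2k+t$ threshold of Theorem~\ref{thm:2k+t}, since $t$ there is the true maximum triangle-packing number and one must rule out the scenario where $G_i$ has \emph{exactly} $i$ disjoint triangles but the bound $|H_k(G_i)|-|L_k(G_i)|\geq 2k+i$ is not strict enough to force $i+1$; this is where the inequality $i<r$ (strict) rather than $i\leq r$ is crucial, giving one unit of slack, and where one may need to also track that each contraction step genuinely created a new triangle (the edge $x_iy_i$ contracted, with $x_i$ in no triangle of $G_{i-1}$, means any triangle of $G_i$ through $y_i$ corresponds to a structure in $G_{i-1}$ that, combined with $i$ pre-existing disjoint triangles avoiding $x_i$, yields $i+1$).
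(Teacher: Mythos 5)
Your proposal follows essentially the same route as the paper: induction on $i$, with the key observation that since $x_i$ lies in no triangle of $G_{i-1}$ (by \emph{(4)}) the contraction preserves all degrees, which yields \emph{(6)} and hence \emph{(1)}, \emph{(3)}--\emph{(5)} by tracking where new edges can appear; and \emph{(2)} obtained exactly as in the paper, by noting that $t_{G_i}\le i$ together with $|H_k(G_i)|-|L_k(G_i)|\ge 2k+i$ and $|G_i|\ge 3k$ would force $k$ disjoint cycles via Theorem~\ref{thm:2k+t}, so that $i<r$ forces $t_{G_i}\ge i+1$. Two base-case justifications are wrong as written, though both are one-line fixes with tools already in the paper. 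First, your argument for \emph{(3)} at $i=0$ (``an edge in $L$ would put both endpoints in a triangle'') is false --- two adjacent low-degree vertices need not lie in a common triangle; the correct reason $L_0$ is independent is Property~\ref{prop:minimal}\emph{(2)}: an edge of $G_0$ must have an endpoint of degree at least $2k-1$, so it cannot join two vertices of $L$. Second, Lemma~\ref{lemma:trianglefree} only produces $2$ disjoint cycles and cannot rule out $G_0$ being triangle-free for $k\ge3$; the right tool is Theorem~\ref{thm:2k+t} with $t=0$. Finally, your treatment of \emph{(4)} in the inductive step is vaguer than it should be: a triangle of $G_i$ through the contracted vertex need not lift to a triangle of $G_{i-1}$ (the mixed case, one neighbor attached to $x_i$ and one to $y_i$, lifts only to a $4$-cycle), and ruling out a new edge inside $N_{G_i}(x)$ for $x\in L_k(G_i)$ genuinely requires the inductive hypothesis \emph{(5)}, i.e.\ $|N_{G_{i-1}}(x)\cap N_{G_{i-1}}(x_i)|\le 1$, as in the paper's argument.
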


\begin{proof}  For all $i$,  \emph{(1)} holds by Lemma~\ref{lemma:4k-1} and \emph{(7)} holds since a contraction cannot increase the number of disjoint cycles. 

The proof of \emph{(2)--(6)} will be by induction on $i$.    By assumption, $G_{0}$ contains at most $1$ triangle.  If $G$ is triangle-free, then by Theorem~\ref{thm:2k+t}, $G_{0}$ contains $k$ disjoint cycles, so \emph{(2)} holds for $i=0$.  Since $G_{0}$ is a minimum counterexample, \emph{(3)} holds for $i=0$ by Property~\ref{prop:minimal}.  Further, \emph{(4)} and \emph{(5)} hold for $i=0$ by Lemma~\ref{lemma:LNoTriangle} and Lemma~\ref{lemma:C4}, respectively.
And \emph{(6)} is trivial for $i=0$.

Suppose that $r \geq 1$ and consider $i \in \{ 1, \ldots, r\}$.  Assume that \emph{(2) - (6)} hold for all $j < i$. Recall that $G_{i} = G_{i-1} \diagup x_{i} y_{i}$.  
By \emph{(4)} for $i-1$, $d_{G_{i}} (y_{i}) \geq d_{G_{i-1}}(y_{i})$ and no other vertex $v$ is  adjacent to both $x_{i}$ and $y_{i}$, so $d_{G_{i}}(v) = d_{G_{i-1}}(v)$.  Thus, \emph{(6)} holds.

To see that \emph{(3)} holds, observe if $uv \notin E(G_{i-1})$, then $uv \in E(G_{i})$ only if $u, v \in N_{G_{i-1}}(x_{i})$.  Since $L_{i-1}$ is an independent set and $L_{k}(G_{i}) \supseteq L_{k}(G_{i-1})$ by \emph{(6)}, $L_{k}(G_{i})$ is also an independent set.  

If $x \in L_{k}(G_{i})$, then by \emph{(6)}, $x \in L_{k}(G_{i-1})$ also and $x \neq x_{i}$.  Let $y, y' \in N_{G_{i}}(x)$ and note that since \emph{(4)} holds for $G_{i-1}$, $yy' \notin E(G_{i-1})$.  Edges are only added to $G_{i}$ between pairs of vertices in $N_{G_{i-1}}(x_{i})$.  Since \emph{(5)} holds for ${i-1}$, $|N_{G_{i-1}} (x_{i}) \cap N_{G_{i-1}}(x)| \leq 1$, so $y$ and $y'$ cannot both be in $N_{G_{i-1}} (x_{i}) \cap N_{G_{i-1}}(x)$.  Thus, $yy' \notin E(G)$ and \emph{(4)} holds for ${i}$.

If $x, x' \in L_{k}(G_{i})$, then by \emph{(6)}, $x,x' \in L_{k}(G_{i-1})$ and $|N_{G_{i-1}} (x) \cap N_{G_{i-1}}(x') | \leq 1$.  Since $L_{k}(G_{i-1})$ is an independent set, $N_{G_{i}}(x) = N_{G_{i-1}}(x)$ and $N_{G_{i}}(x') = N_{G_{i-1}}(x')$, so $|N_{G_{i}} (x) \cap N_{G_{i}}(x') | \leq 1$ and \emph{(5)} holds.

Finally, by \emph{(2)}, $G_{i-1}$ contains exactly $i$ disjoint triangles.  Contracting an edge introduces increases the number of disjoint triangles
by at most $1$, so $G_{i}$ contains 
at most $i+1$ disjoint triangles.  By \emph{(6)},
\begin{equation}\label{eq:induction1}
|H_{k}(G_{i})|-|L_{k}(G_{i})| \geq h -(\ell-i)\geq  2k + i.
\end{equation}
Since $|G_{i}| \geq 3k$, if $G_{i}$ contains $i$ disjoint triangles, by Theorem~\ref{thm:2k+t}, $G_{i}$ contains $k$ disjoint cycles and $i = r$. 
 Therefore, if $i < r$ then $G$ contains exactly $i+1$ disjoint triangles and \emph{(2)} holds.
\end{proof}

We are now ready to complete the proof.  If $r < \min \{ \ell, k - 1 \}$, then we stopped the process because $G_{r}$ contains $k$ disjoint cycles.  
 If $r = k - 1 = \min \{ \ell, k - 1 \}$, then $G_{k-2}$ contains $k-1$  disjoint triangles and $G_{k-1}$ contains at least this many disjoint
 triangles.  If $G_{k-1}$ contains only $k-1$  disjoint triangles, then by Lemma~\ref{lemma:induction} \emph{(6)},
\begin{equation}\label{eq:induction2}
|H_{k}(G_{k-1})| -|L_{k}(G_{k-1})|\geq h-(\ell-(k-1)) \geq  2k+(k-1) =  3k -1.
\end{equation}
Lemma~\ref{lemma:4k-1} implies that $G_{k-1}$ contains $3k$ vertices and by Theorem~\ref{thm:2k+t}, $G_{r} = G_{k-1}$ contains $k$ disjoint cycles.  Finally if $r = \ell = \min \{ \ell, k - 1 \}$, then $L_{r} = \emptyset$ and $|H_{k}(G_{r})| \geq 2k$.  Corollary~\ref{cor:K-K-Y} implies $G_{r} = G_{\ell}$ contains $k$ disjoint cycles.  Therefore, in any case $G_{r}$ contains $k$ disjoint cycles and by Lemma~\ref{lemma:induction} \emph{(7)}, $G$ contains $k$ disjoint cycles as well.

\section{Concluding remarks}\label{sec:conclusion}

\begin{remark} As mentioned earlier, there are graphs $G$ with $|G|\geq 3k$ and $|H_k(G)| - |L_k(G)| \geq 2k$ that
have no $k$ disjoint cycles, but all examples that we know have rather few vertices. The largest such graph $G$ that we can construct has  $4k$ vertices and is obtained as follows.

Let $F$ be a copy of $K_{3k-1}$. Choose $W\subset V(F)$ with $|W|=k$ and  delete all edges between the vertices in $W$. Then
add $k+1$ new vertices $x_0,x_1,\ldots,x_k$, and make $x_0$ adjacent to $x_1,\ldots,x_k$ and all vertices in $W$. In other words, let $(K_{2k-1}+K_1)\vee \overline K_k$ be the $2$-core of $G$, and complete the construction  by adding $k$ leaves adjacent to $x_0$, where $V(K_1)=\{x_0\}$.

Now $L_k(G)=\{x_1,\ldots,x_k\}$, and $H_k(G)=V(G)\setminus L_k(G)$. This graph has no $k$ disjoint cycles, 
since its $2$-core has $3k$ vertices, and $x_0$ does not belong to any triangle.

Is it true that every graph $G$ with $|G|\geq 4k+1$ and $|H_k(G)| - |L_k(G)| \geq 2k$ has $k$ disjoint cycles? 
\end{remark}

\begin{remark}  
Lemma~\ref{lemma:2k-1} suggests that considering $|V_{\geq 2k+1}(G)| - | V_{\leq 2k-1}(G)|$ instead of $|H_{k}(G)|  - |L_{k}(G)|$ 
may result in different bounds providing the existence of $k$ disjoint cycles.  
It could be  that the claim of Lemma~\ref{lemma:2k-1} holds not only for triangle-free graphs.  That is,  it could be that 
\emph{for any non-empty graph $G$ with $|V_{\geq 2k+1}(G)| - | V_{\leq 2k-1}(G)| \geq 2k - 2$, $G$ contains $k$ disjoint cycles.}  
This is trivially true for $k = 1$.
\end{remark}

\bibliographystyle{abbrv}
\bibliography{DiracErdos}

\end{document}